\documentclass[12pt,a4paper,reqno]{amsart}

\usepackage{amsmath,amssymb,amsthm}
\usepackage{mathtools}
\usepackage{geometry}
\allowdisplaybreaks
\numberwithin{equation}{section}

\newtheorem{theorem}{Theorem}[section]
\newtheorem{lemma}[theorem]{Lemma}
\newtheorem{remark}[theorem]{Remark}

\newtheorem{corollary}[theorem]{Corollary}

\newcommand{\TV}{d_{\mathrm{TV}}}

\newcommand{\E}{\mathbb E}
\newcommand{\Pp}{\mathbb P}
\renewcommand{\P}{\mathbb P}
\newcommand{\R}{\mathbb R}

\newcommand{\op}{\mathrm{op}}

\newcommand{\tr}{\mathrm{trace}}

\title[Propagation of Chaos and its breakdown
for the Hopfield Model]{High-Temperature Increasing Propagation of Chaos and its breakdown\\
	for the Hopfield Model}
\author[Matthias L\"owe]{Matthias L\"owe}
\address[Matthias L\"owe]{Fachbereich Mathematik und Informatik,
	University of M\"unster,
	Einsteinstra\ss e 62,
	48149 M\"unster,
	Germany}
\date{}

\begin{document}
	\begin{abstract}
		We analyze increasing propagation of chaos in the high temperature regime of a disordered mean-field model, the Hopfield model. We show that for $\beta<1$ (the true high temperature region) we have increasing propagation of chaos as long as the size of the marginals $k=k(N)$ and the number of patterns 
		$M=M(N)$ satisfies $Mk/N \to 0$. For $M=o(\sqrt N)$ we show that propagation of chaos breaks down for $k/N \to c>0$. At the ciritcal temperature we show that, for $M$ finite, there
		is increasing propagation of chaos, for $k=o(\sqrt N)$, while we have breakdown of 
		propagation of chaos for $k=c \sqrt N$, for a $c>0$. All these reulst hold in probability in the disorder.   
	\end{abstract}
	\subjclass[2020]{Primary: 82B05, 82B44 Secondary: 82B20, 60F05}
	\keywords{Hopfield model, disordered systems, propagation of chaos, total variation distance, mixture distribution}
	\thanks{Research was funded by the Deutsche Forschungsgemeinschaft (DFG, German Research Foundation) under Germany's Excellence Strategy EXC 2044 - 390685587, Mathematics M\"unster: \emph{Dynamics-Geometry-Structure}.}
	
	\maketitle
	
	\section{Introduction}
	Propagation of chaos is a central concept in the study of interacting
	particle systems and mean-field models.
	Roughly speaking, it describes the phenomenon that,
	as the system size $N$ tends to infinity,
	finite collections of particles behave asymptotically independently,
	with a common limiting law.
	Propagation of chaos originated in Kac's Markovian models for gas dynamics
	\cite{Kac_foundations,Kac_probability}, an attempt to justify
	Boltzmann's ''Sto\ss zahlenansatz''.
	Propagation of chaos has since become an important 
	object of study
	in probability, statistical mechanics, and mathematical physics.
	The original approach by Kac was that,
	if at time 0 the finite marginal distributions of a system are product measures in the
	thermodynamic limit, then this should carry over to the time-evolved system. In equilibrium settings for mean-field
	Gibbs measures (where the energy function is a 
	function of the empirical measure) with a unique minimizer of the Helmholtz free energy, however, this was shown to
	follow from the fact the extremal Gibbs measures locally look like product measures, i.e.\
	that any finite subset of spins forms a family of independent random variables in the thermodynamic limit \cite[Theorem 3]{BAZ_chaos}.
	Such results provide a rigorous justification of mean-field
	approximations and explain why macroscopic behavior
	can often be described by effective one-particle models. In the present note, we remain entirely within a static framework, and study propagation of chaos in the sense of asymptotic factorization of finite-dimensional marginals for families of (random) 
	mean-field Gibbs measures.
	
	Moreover, for mean-field spin systems, propagation of chaos
	is closely tied to the high-temperature regime.
	In classical models such as the Curie--Weiss Ising model,
	chaos holds when the inverse temperature is below the critical value,
	while it breaks down in the low-temperature phase. There one has to replace
	the product measure by a mixture of the (several) extremal limiting Gibbs measures.
	An additional difficulty may arise when the Gibbs measures in question are random. To the best of our knowledge comparatively little is known in such situations (with the exception of of \cite{BG98book} and \cite{KL24}). The present paper contributes to the study of propagation of chaos for random mean-field Gibbs measures in the high-temperature and near-critical regimes.

	Importantly, in \cite{BAZ_chaos}, the authors also introduced the concept of \textit{increasing} propagation of chaos and showed that in the true high temperature regime of many mean-field models one can let the size of the marginals $k$ grow with the system size $N$ as long as $k=o(N)$. 
	Understanding not only whether propagation of chaos holds,
	but also the precise scales at which it breaks down,
	has become an important theme in recent work on mean-field Gibbs measures \cite{BAZ_chaos,Lacker22, JKLM23,KL24,RS25,JKL25}.
	
	While increasing propagation of chaos has been established in several genuinely high-temperature regimes of ordered mean-field models, much less is known in the presence of random mean-field Gibbs measures, where already the analysis of critical fluctuations can become delicate (\cite{gentzloewe, gentzloewe2, talagrand_critical_hopf}).   
	In the present setting of random mean-field Gibbs measures, we show that this high-temperature scaling persists for $\beta <1$ (the critical value),
	whereas at criticality 
	$\beta=1$, the admissible growth drops to $k=o(\sqrt N)$. Moreover, 
	these bounds on 
	$k$ are optimal for the class of random mean-field Gibbs measures considered here.
	
	In this article we study propagation of chaos
	in the Hopfield model, a paradigmatic example
	of a disordered mean-field spin system.
	The Hopfield model has (at least) two distinct origins. It was first introduced by
	Pastur and Figotin \cite{PS84} as a solvable model of a disordered system. At about the same time it
	was independently invented by Hopfield as
	a model of associative memory \cite{Hopfield1982}.
	Both aspects have been intensively studied. For the disordered systems facet see e.g. \cite{BGP94, BG97} or \cite{talagrand}, while 
	the neural network aspect has found a renewed interest through two recent papers \cite{KrotovHopfield2016,DHLUV17}.
	
	For the purposes of the present paper, the 
	probabilistic viewpoint of seeing the Hopfield model
	as a mean-field Ising model with random, structured interactions is more appropriate.
	The disorder, i.e.\ the random interactions, is generated by a collection of random patterns,
	which induces a random quadratic Hamiltonian
	and leads to a rich interplay between thermal fluctuations
	and quenched randomness.
	While the thermodynamic properties of the Hopfield model
	are well understood, much less is known about
	the fine structure of its finite-dimensional marginals
	and their asymptotic independence properties. There is only one result concerning the propagation of chaos in the Hopfield model, see \cite[Theorem 8.15]{BG98}. However, this result addresses fixed-dimensional marginals in the low-temperature regime, and relies on mechanisms different from those considered here.
	
	Our main goal is to analyze propagation of chaos
	for the Hopfield Gibbs measure in the high-temperature
	and critical regimes, with particular emphasis
	on the size of the marginals.
	We identify precise conditions under which propagation of chaos holds,
	as well as sharp thresholds for its breakdown.
	In the high-temperature regime, we prove propagation of chaos
	for growing marginals, provided their size grows sublinearly in the system size $N$.
	At criticality, we show that chaos breaks down in a critical window,
	whose scale matches that of the dominant collective fluctuations.
	reveal a clear transition between asymptotic independence and regimes of partial or complete breakdown of chaos,
	and highlight the role played by disorder-induced fluctuations
	in determining these regimes.
	
	\subsection{The Model}
	Let us next describe the central model for the purpose of this note.
	Let $\xi_i=(\xi_i^1,\dots,\xi_i^M)\in\{-1,+1\}^M$, $i=1,\dots,N$, be i.i.d.\ random vectors with independent coordinates,
	\[
	\E[\xi_i^\nu]=0, \qquad \E[(\xi_i^\nu)^2]=1 .
	\]
	(In the interpretation of an associative memory, the vectors $(\xi^\mu)_{\mu=1}^M=((\xi_i^\mu)_{i=1}^N)_{\mu=1}^M$ are called images or patterns).
	In what follows we will always assume  that $M=M(N)$ may depend on $N$, but in such a way, that $M=o(N)$, which is the natural regime in which the Hopfield model exhibits mean-field behavior. 
	For fixed, i.e.\ quenched, patterns $(\xi^\mu)_\mu$	define and for $\sigma\in\{-1,+1\}^N$, define the overlap vector
	\[
	m_N(\sigma) = \big(m_N^1(\sigma),\dots,m_N^M(\sigma)\big),
	\qquad
	m_N^\nu(\sigma)=\frac1N\sum_{i=1}^N \sigma_i\xi_i^\nu .
	\]
	The Hopfield Gibbs measure at inverse temperature $\beta>0$ is
	\[
	\mu_N(\sigma)=\frac{1}{Z_N}
	\exp\Big(\frac{\beta N}{2}\|m_N(\sigma)\|^2\Big).
	\]
	The Gibbs measure is fully determined by $m_N(\sigma)$ (plus the inverse temperature), it thus is natural to consider 
	the behaviour of the overlap under the Gibbs measure.
	This has been done in numerous papers: 
	In \cite{BGP94} it was shown that -- similar to the Curie-Weiss model -- in the Hopfield model the critical (inverse) temperature is $\beta=1$. While for $\beta \le 1$ the overlap vector  gets concentrated in the $M$-dimensional 0-vector, for larger $\beta$ the limit points of $m_N$ are associated with the 2$M$ vectors $\pm z(\beta) e_\mu$ where 
	$e_\mu$ is the $\mu$'th unit vector and $z(\beta)$ is the largest solution of the equation
	$$
	z=\tanh(\beta z).
	$$
	Central Limit theorems for $\sqrt N m_N(\cdot)$ were proven by Gentz \cite{gentz_annals} or Bovier and Gayrard \cite{BG_CLT}. All these results hold true for almost all realizations of the patterns and only need $M=o(N)$
	(for \cite{gentz_annals} this is only true in the high temperature regime, which, however, is the most relevant for us in the present note.) Importantly, as in the Curie-Weiss model, at $\beta=1$ the fluctuations are non-Gaussian. As was shown in \cite{gentzloewe, gentzloewe2, talagrand_critical_hopf} at $\beta=1$
	the rescaled overlap vector $N^{1/4} m_N$ converges in distribution to a random limit (while in the other limit theorems the limit was deterministic). Also large and moderate deviations for the overlap vector are available (see \cite{BG_LDP, EL_hopf}).

	\subsection{Statement of the results}
	In this subsection we state our main results on increasing propagation of chaos
	for the Hopfield model in the high-temperature and critical regimes. In particular, we identify regimes in which propagation of chaos breaks down
	at explicit scales of the marginals. For $\beta<1$, we allow the number of patterns $M=M(N)$ to diverge with $N$,
	and establish both increasing propagation of chaos and sharp breakdown results.
	At criticality $\beta=1$, we restrict attention to fixed $M$, and identify the
	critical window in which propagation of chaos fails.

	In order to formulate our results, let us agree on the following notation:  
	For $k\le N$, let $\mu_N^{(k)}$ denote the marginal of $\mu_N$ on
	$(\sigma_1,\dots,\sigma_k)$.
	Let $\pi$ be the Rademacher law on $\{-1,+1\}$ with $\pi(\pm1)=\frac12$.
	
	Then we will prove: 
	\begin{theorem}
		\label{thm:highT}
		Fix $\beta<1$. Let $M=M(N)$ satisfy $M/N\to0$, and let $k=k(N)\to\infty$
		with
		\[
		\frac{k(N)M(N)}{N}\longrightarrow 0 
		\]
		(which in particular allows $k=o(N)$ when $M$ is fixed).
		
		Then
		\[
		\TV\big(\mu_N^{(k)},\pi^{\otimes k}\big)
		\longrightarrow 0
		\qquad\text{in }\Pp_\xi\text{-probability}.
		\]
	\end{theorem}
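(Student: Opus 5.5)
The approach is to use the Hubbard--Stratonovich transform to write $\mu_N$ as a mixture of product measures, and then control the mixing variable. Let $Z\sim\mathcal N(0,\frac{1}{\beta N}I_M)$ be independent of everything. Then
\[
\mu_N(\sigma)=\int \prod_{i=1}^N \frac{e^{\langle z,\xi_i\rangle\sigma_i}}{2\cosh\langle z,\xi_i\rangle}\,\nu_N(dz),
\]
where $\nu_N$ is the law with density proportional to $\exp\big(-\frac{\beta N}{2}\|z\|^2+\sum_{i=1}^N\log\cosh\langle z,\xi_i\rangle\big)$ on $\R^M$. Consequently $\mu_N^{(k)}$ is the $\nu_N$-mixture of the product measures $\bigotimes_{i\le k}\rho_{\langle z,\xi_i\rangle}$, where $\rho_h(\pm1)=e^{\pm h}/(2\cosh h)$. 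By convexity of total variation,
\[
\TV(\mu_N^{(k)},\pi^{\otimes k})\le \int \TV\Big(\textstyle\bigotimes_{i\le k}\rho_{\langle z,\xi_i\rangle},\pi^{\otimes k}\Big)\nu_N(dz).
\]
Pinsker's inequality together with $\mathrm{KL}(\rho_h\|\pi)=h\tanh h-\log\cosh h\le h^2/2$ then bounds the integrand by $\frac12\big(\sum_{i\le k}\langle z,\xi_i\rangle^2\big)^{1/2}$. Since the integrand is also at most $1$, it suffices to show that
\[
\int \sum_{i\le k}\langle z,\xi_i\rangle^2\,\nu_N(dz)=\tr\Big(A_k\int zz^T\nu_N(dz)\Big)\to0
\]
in probability, where $A_k=\sum_{i\le k}\xi_i\xi_i^T$.

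The key step is a second-moment bound for $\nu_N$. Using $\log\cosh x\le x^2/2$, the exponent is at most $-\frac N2\langle z,(\beta I-\beta\Xi)z\rangle\cdot$ (suitably normalised), where $\Xi=\frac1N\sum_i\xi_i\xi_i^T$. More precisely, the density is bounded by $\exp\big(-\frac N2\langle z,(\beta-\beta^2\Xi)z\rangle\big)$ after rescaling; I would choose the variable change $z\mapsto \beta z$ so that the Gaussian part is clean. Since $M/N\to0$, standard random-matrix bounds (Bai--Yin, or Bernstein with a net argument) give $\|\Xi\|_{\op}\le 1+C\sqrt{M/N}+o(1)\to1$ with high probability. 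Because $\beta<1$, the quadratic form is therefore uniformly positive definite, with constant $c=\beta(1-\beta)/2$.

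The main obstacle is turning this density upper bound into a bound on second moments, because the normalising constant also matters. I would handle this by a log-concavity argument. The Hessian of the negative log density is $\beta N I-\sum_i \mathrm{sech}^2(\langle z,\xi_i\rangle)\xi_i\xi_i^T\succeq N(\beta-\|\Xi\|_{\op})I$. This is at most $\beta N I-N\Xi$, which is $\succeq cN I$ with high probability. Hence $\nu_N$ is strongly log-concave, and since it is symmetric its mean is $0$. Brascamp--Lieb then gives $\int zz^T d\nu_N\preceq (cN)^{-1}I$. Here one must take care that $\beta<1$ versus $\|\Xi\|\approx1$ leaves room; this is exactly where $\beta<1$ is used.

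Finally, $\tr(A_k (cN)^{-1})=kM/(cN)$ deterministically, since $\|\xi_i\|^2=M$. This quantity tends to $0$ by the assumption $kM/N\to0$. Combined with the previous steps, this yields $\TV\to0$ on the high-probability event $\{\|\Xi\|_{\op}\le (1+\beta)/2\cdot\beta^{-1}\wedge \dots\}$, namely the event that $\|\Xi\|_{\op}<\beta^{-1}$ with margin. The theorem therefore holds in $\Pp_\xi$-probability.
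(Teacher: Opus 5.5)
Your argument is correct in substance, and it differs from the paper's at the one step that matters.

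\textbf{Where the routes diverge.} The first half matches the paper: Hubbard--Stratonovich mixture, convexity of $\TV$, Pinsker, and Jensen reduce everything to $\tr\big(A_k\,\E[uu^\top]\big)$ with $\tr(A_k)=kM$. Your bound $\mathrm{KL}\le h^2/2$ is a slightly sharper version of the paper's $2\tanh^2 h$. The paper then controls $\E_{Q_N}[uu^\top]$ through a pointwise Gaussian domination $Q_N\le C_0\gamma_\alpha$, obtained from $\log\cosh x\le x^2/2$, with $C_0=Z_G/Z_Q$. You use strong log-concavity instead. In the paper's normalization the Hessian of $-\log\frac{dQ_N}{du}$ is $\frac N\beta I-\sum_i\mathrm{sech}^2(u\cdot\xi_i)\,\xi_i\xi_i^\top\succeq N\big(\beta^{-1}-\|\widehat\Sigma_N\|_{\mathrm{op}}\big)I$. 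So on the covariance event $Q_N$ is uniformly strongly log-concave, and Brascamp--Lieb plus symmetry give $\E_{Q_N}[uu^\top]\preceq\big(N(\beta^{-1}-1-\varepsilon)\big)^{-1}I$.

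\textbf{What each route buys.} Your route removes exactly the obstacle you flagged: the normalizing constant never enters. In the domination route, $C_0$ is the density ratio at $u=0$. Bounding $Z_Q$ from above with $\log\cosh x\le x^2/2$ gives $C_0\ge\big(\frac{1-\beta(1+\varepsilon')}{1-\beta(1+\varepsilon)}\big)^{M/2}$ whenever $\|\widehat\Sigma_N\|_{\mathrm{op}}\le 1+\varepsilon'$, and $\varepsilon'$ can be taken to tend to $0$ with high probability. So $C_0$ is not controlled by $(\beta,\varepsilon)$ alone once $M\to\infty$. The paper's estimate $C\sqrt{kM/N}$ is therefore justified as written only for bounded (or very slowly growing) $M$. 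Your dimension-free covariance bound covers the full range $M=o(N)$, $kM/N\to0$. The paper's pointwise domination has the advantage of being reused later for tails and fourth moments under $Q_N$. Strong log-concavity would deliver those as well, via Gaussian concentration of the $\|w\|$-Lipschitz maps $u\mapsto u\cdot w$.

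\textbf{What needs repair: the bookkeeping of $\beta$.} Your first display uses Gaussian weight $e^{-\beta N\|z\|^2/2}$ with fields $\langle z,\xi_i\rangle$. That is the Hubbard--Stratonovich representation at inverse temperature $1/\beta$, not $\beta$, and your Hessian line inherits the error. For $\beta<1$ the constant $N(\beta-\|\Xi\|_{\mathrm{op}})$ is negative, and $\beta NI-N\Xi$ is not $\succeq cNI$. In fact that measure is not log-concave at all for $\beta<1$: at $z=0$ the Hessian of its potential is $N(\beta I-\Xi)$, and $\lambda_{\max}(\Xi)\ge\tr(\Xi)/M=1$. So the key step fails as literally written.

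The consistent version is the one behind your second paragraph, your constant $c$ and your final event:
\begin{itemize}
\item use fields $\beta\langle z,\xi_i\rangle$ and mixing density $\propto e^{-\beta N\|z\|^2/2}\prod_i\cosh(\beta\langle z,\xi_i\rangle)$;
\item its Hessian is $\beta NI-\beta^2\sum_i\mathrm{sech}^2(\beta\langle z,\xi_i\rangle)\xi_i\xi_i^\top$, which is bounded \emph{below} (not above) by $\beta NI-\beta^2N\Xi$;
\item hence it is $\succeq N\beta(1-\beta\|\Xi\|_{\mathrm{op}})I\succeq\frac{\beta(1-\beta)}{2}NI$ on $\{\|\Xi\|_{\mathrm{op}}\le\frac{1+\beta}{2\beta}\}$.
\end{itemize}
The fields then carry an extra factor $\beta^2$ in their covariance, so the final bound reads $\TV(\mu_N^{(k)},\pi^{\otimes k})\le\frac12\sqrt{2\beta kM/((1-\beta)N)}$. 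With this correction the proof is complete.
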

	
	\begin{theorem}
		\label{thm:stop_macro_Msmall}
		Fix $\beta\in(0,1)$ and assume $M=M(N)\to\infty$ with $M=o(\sqrt N)$.
		Let $k=k(N)$ satisfy $k/N\to\rho\in(0,1)$.
		Then, in $\P_\xi$-probability,
		\[
		\TV\big(\mu_N^{(k)},\pi^{\otimes k}\big)\longrightarrow 1.
		\]
	\end{theorem}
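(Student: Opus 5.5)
Since the total variation distance is at least $|\mu_N^{(k)}(A)-\pi^{\otimes k}(A)|$ for any event $A\subset\{-1,1\}^k$, the plan is to exhibit a test statistic that separates the two measures with probability tending to one. The natural choice is the partial overlap energy
\[
T_k(\sigma)=\sum_{\nu=1}^M\Big(\sum_{i=1}^k\sigma_i\xi_i^\nu\Big)^2 ,
\]
which depends only on $\sigma_1,\dots,\sigma_k$. We will show that $T_k$ has different first-order means under the two laws and that both fluctuations are of smaller order. Then we take $A=\{T_k\le t_N\}$ with a threshold $t_N$ between the two means.

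\textbf{The product law.} Under $\pi^{\otimes k}$ we have $E_\pi T_k=\sum_\nu\sum_{i\le k}(\xi_i^\nu)^2=kM$. Since the $\sigma_i$ are i.i.d.\ signs, the variance is $\mathrm{Var}_\pi T_k=2\sum_{i\neq j}\big(\sum_\nu\xi_i^\nu\xi_j^\nu\big)^2$. Its $\P_\xi$-expectation is $2k(k-1)M$, so in probability it is $O(k^2M)$. Hence $T_k=kM+O_P(k\sqrt M)$ under $\pi^{\otimes k}$, and the relative fluctuation is $O(1/\sqrt M)\to0$. This is where $M\to\infty$ is used.

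\textbf{The Gibbs measure.} Write $\sum_{i\le k}\sigma_i\xi_i^\nu=\langle \sigma,P\xi^\nu\rangle$, where $P$ projects onto the first $k$ coordinates. With the Hubbard--Stratonovich transform, $\mu_N$ is a mixture over $z\sim$ the induced measure on $\R^M$, with density proportional to $\exp(-\tfrac{\beta N}{2}\|z\|^2+\sum_i\log\cosh(\beta\langle\xi_i,z\rangle))$. Conditionally on $z$, the spins are independent with means $\tanh(\beta\langle\xi_i,z\rangle)$. For $\beta<1$ and $M=o(N)$, $z$ concentrates at scale $\|z\|^2\approx M/(N(1-\beta))$; this can be obtained from the same Gaussian computation underlying Theorem~\ref{thm:highT}, or from the CLT of Gentz/Bovier--Gayrard. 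Then
\[
E_{\mu_N}T_k= kM+\sum_\nu\sum_{i\ne j\le k}\xi_i^\nu\xi_j^\nu\,E\big[\tanh(\beta\langle\xi_i,z\rangle)\tanh(\beta\langle\xi_j,z\rangle)\big].
\]
Linearizing $\tanh$, the correction is approximately $\beta^2\sum_\nu E\|P\Xi z\|^2$-type terms, where $\Xi$ is the $N\times M$ pattern matrix. Since $P\Xi^T$ acts like $\sqrt{k}$ times an almost isometry, this gives $\beta^2 k^2 E\|z\|^2\cdot M$-order contributions. A more robust route is to compute exactly in the Gaussian approximation, where the quadratic form gives $\sqrt N m_N\approx N(0,(1-\beta)^{-1}I)$ for the whole system.

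Equivalently, the cleanest statement is about $\|P\Xi^T\sigma\|^2/k$ as a function of the restricted overlap. Under $\mu_N$, the vector $k^{-1/2}\Xi^TP\sigma$ has covariance approximately $(1+\rho\,c_\beta)I_M$ with $c_\beta=\beta/(1-\beta)>0$. This follows because the partial block of size $\rho N$ of a Gaussian vector with covariance $(1-\beta)^{-1}I$ (full sum), conditioned appropriately, has variance $1+\rho(\tfrac1{1-\beta}-1)$. So $E_{\mu_N}T_k\approx kM(1+\rho c_\beta)$, a macroscopic shift of relative size $\rho c_\beta>0$ precisely because $k/N\to\rho>0$.

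\textbf{Main obstacle.} The hardest step is controlling $\mathrm{Var}_{\mu_N}T_k=o((kM)^2)$ and the error terms in the linearization uniformly in growing $M$. This requires concentration of $z$ in all $M$ directions simultaneously and error terms like $M^2/N$ from the cubic/quartic Taylor terms of $\log\cosh$, which is exactly where $M=o(\sqrt N)$ enters. I would first establish a quantitative Gaussian approximation for the law of $\sqrt N m_N$ (and of the partial overlap $\sqrt N m^{(k)}$) with error $O(M^2/N)$, via the Hubbard--Stratonovich representation and a Laplace expansion around $0$. I would then compute the second moment of $T_k$ under the approximating Gaussian with Wick's formula, obtaining variance $O(k^2M)$. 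Finally, Chebyshev under both measures with threshold $t_N=kM(1+\rho c_\beta/2)$ gives $\mu_N^{(k)}(A)\to0$ and $\pi^{\otimes k}(A)\to1$ in $\P_\xi$-probability, hence $\TV\to1$.
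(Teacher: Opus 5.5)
Your plan is sound, but it takes a genuinely different route from the paper.

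\textbf{What the paper does.} The paper works with the likelihood ratio $L_N=d\mu_N^{(k)}/d\pi^{\otimes k}$. It Taylor-expands $\log\cosh$, discards the quartic remainders, and replaces the Hubbard--Stratonovich measure $Q_N$ by $\mathcal N(0,\tau^2N^{-1}I_M)$; this is where $M=o(\sqrt N)$ is used. Evaluating the resulting Gaussian integral gives $L_N\approx(1+\lambda_N)^{-M/2}\exp\bigl(\tfrac{\lambda_N}{2(1+\lambda_N)}\|S_{N,k}\|^2\bigr)$ uniformly in $\sigma_{1:k}$. Hanson--Wright, together with an exponential-moment bound for uniform integrability, then identifies the limit of $\TV$ with $\TV(\mathcal N(0,I_M),\mathcal N(0,(1+\lambda)I_M))\to1$.

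\textbf{What you do differently.} You never touch $L_N$. You use the same statistic $\|S_{N,k}\|^2=T_k/k$ as a distinguishing test and compute its variance under $\pi^{\otimes k}$ exactly, so no Hanson--Wright is needed. Under $\mu_N$ you need only its first two moments. Your predicted covariance $(1+\rho\beta/(1-\beta))I_M$ is exactly the paper's $1+\lambda$.

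\textbf{What each route buys.} The paper's route describes the likelihood ratio itself and the limiting pair of Gaussian laws. Yours needs far weaker input: errors only have to be small in expectation, not uniformly over the configurations on which $L_N$ is exponentially large in $M$. It also gives the rate $1-O(1/M)$ directly.

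\textbf{Your main obstacle is milder than you think.} It does not require a full Laplace expansion, and handling it as follows seems to remove the need for $M=o(\sqrt N)$.
\begin{itemize}
\item Under the joint Hubbard--Stratonovich law one has exactly $u\mid\sigma\sim\mathcal N(\beta m_N(\sigma),\tfrac{\beta}{N}I_M)$, so $\E_{Q_N}\|u\|^2\ge\beta M/N$.
\item Linearizing $\tanh$ then gives $\E_{\mu_N}\|S_{N,k}\|^2\ge M(1+\rho\beta(1-\varepsilon)^2)-o(M)$. The corrections, including the diagonal terms, are $O(M^2/N)=o(M)$.
\item On $\{\|\widehat\Sigma_N\|_{\op}\le1+\varepsilon\}$ the Hessian of $-\log\frac{dQ_N}{du}$ is $\tfrac N\beta I_M-\sum_i\mathrm{sech}^2(u\cdot\xi_i)\,\xi_i\xi_i^\top\succeq N(\beta^{-1}-1-\varepsilon)I_M$. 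So $Q_N$ is strongly log-concave, and the Poincar\'e inequality bounds the variance of $\E[\|S_{N,k}\|^2\mid u]$ by $O(M)$.
\item The conditional variance given $u$ is also $O(M)$, by a direct computation with $\sigma_i=\tanh(u\cdot\xi_i)+\text{centered noise}$.
\end{itemize}
Chebyshev with a threshold such as $M(1+\rho\beta/4)$ then needs only $M\to\infty$ and $M=o(N)$. That is precisely the relaxation the paper's remark leaves open.

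\textbf{Minor slips.} In your normalization $\|z\|^2\approx M/(\beta(1-\beta)N)$; what you wrote is the scale of $\|m_N\|^2$. The factor $M$ in ``$\beta^2k^2E\|z\|^2\cdot M$'' should be dropped. Finally, a Gaussian approximation for $\sqrt N m^{(k)}$ can only be meant at the level of moments, since it is lattice-valued. The approximation should be made for the mixing field $u$.
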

	
	\begin{remark}
		The restriction $M=o(\sqrt N)$ in Theorem~\ref{thm:stop_macro_Msmall}
		is stronger than in Theorem~\ref{thm:highT}.
		Whether this condition can be relaxed remains an open problem.
	\end{remark}	
	
	\begin{theorem}
		\label{thm:crit_poc}
		Fix $\beta=1$ and let $M\in\mathbb N$ be fixed. Assume $k=k(N)\to\infty$ satisfies
		\[
		\frac{k(N)}{N^{1/2}}\longrightarrow 0.
		\]
		Then
		\[
		d_{\mathrm{TV}}\big(\mu_N^{(k)},\pi^{\otimes k}\big)\longrightarrow 0
		\qquad\text{in }\mathbb P_\xi\text{-probability}.
		\]
	\end{theorem}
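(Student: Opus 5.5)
We would prove Theorem~\ref{thm:crit_poc} through the Hubbard--Stratonovich (Gaussian) representation of the Gibbs measure as a mixture of product measures. Writing $\exp(\frac{N}{2}\|m_N(\sigma)\|^2)=c_N\int_{\R^M}\exp(\langle z,\sum_i\sigma_i\xi_i\rangle-\frac N2\|z\|^2)\,dz$, we get
\[
\mu_N(\sigma)=\int_{\R^M}\prod_{i=1}^N\frac{e^{\sigma_i\langle z,\xi_i\rangle}}{2\cosh\langle z,\xi_i\rangle}\,\nu_N(dz),
\qquad
\nu_N(dz)\propto \exp\Big(-N\Phi_N(z)\Big)dz,
\]
where $\Phi_N(z)=\frac12\|z\|^2-\frac1N\sum_{i=1}^N\log\cosh\langle z,\xi_i\rangle$. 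Consequently $\mu_N^{(k)}=\int P_z^{(k)}\,\nu_N(dz)$, where $P_z^{(k)}$ is the product measure whose $i$-th spin has mean $\tanh\langle z,\xi_i\rangle$. By convexity of total variation,
\[
\TV\big(\mu_N^{(k)},\pi^{\otimes k}\big)\le\int\TV\big(P_z^{(k)},\pi^{\otimes k}\big)\,\nu_N(dz).
\]
Pinsker's inequality, with relative entropy per coordinate $\le C\langle z,\xi_i\rangle^2$, bounds the integrand by $\min\{1,C(\sum_{i\le k}\langle z,\xi_i\rangle^2)^{1/2}\}$.

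The next step handles the disorder. Take the $\P_\xi$-expectation of $\sum_{i\le k}\langle z,\xi_i\rangle^2$ for deterministic $z$; it equals $k\|z\|^2$. More usefully, since $M$ is fixed, $\frac1k\sum_{i\le k}\xi_i\xi_i^T\to I_M$ in probability by the law of large numbers (as $k\to\infty$). Hence with high $\P_\xi$-probability $\sum_{i\le k}\langle z,\xi_i\rangle^2\le 2k\|z\|^2$ simultaneously for all $z$. It therefore suffices to show
\[
\int\min\{1,C\sqrt{k}\|z\|\}\,\nu_N(dz)\to0
\]
in probability, which holds as soon as $\nu_N(\|z\|>\varepsilon_N)\to0$ for some $\varepsilon_N$ with $\sqrt k\,\varepsilon_N\to0$. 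Since $k=o(\sqrt N)$, we can choose $\varepsilon_N=K_N N^{-1/4}$ with $K_N\to\infty$ slowly enough that $K_N\sqrt{k}N^{-1/4}\to0$.

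The main obstacle is this tightness statement at criticality: $N^{1/4}z$ under $\nu_N$ is tight in $\P_\xi$-probability. One would like to cite the critical fluctuation results of \cite{gentzloewe,gentzloewe2,talagrand_critical_hopf}; note that $\nu_N$ is the law of $m_N(\sigma)+N^{-1/2}g$ with $g$ an independent standard Gaussian. Since $N^{-1/2}\ll N^{-1/4}$, tightness of $N^{1/4}m_N$ under $\mu_N$ transfers to $\nu_N$.

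To be self-contained, we would instead expand $\Phi_N$ directly. Using $\log\cosh x=\frac{x^2}{2}-\frac{x^4}{12}+O(x^6)$,
\[
\Phi_N(z)=\tfrac12 z^T(I-A_N)z+\tfrac1{12N}\sum_i\langle z,\xi_i\rangle^4+O(\|z\|^6),
\]
with $A_N=\frac1N\sum_i\xi_i\xi_i^T$ and $\|A_N-I\|_{\op}=O_P(N^{-1/2})$. The quartic term is bounded below by $c\|z\|^4$ uniformly, with high probability, because $\frac1N\sum_i\langle u,\xi_i\rangle^4$ converges uniformly over unit vectors $u$ to $3-2\sum_\mu u_\mu^4\ge1$. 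Thus on $\|z\|\le\delta$, after substituting $z=N^{-1/4}y$,
\[
N\Phi_N\ge -C_\xi\|y\|^2+c\|y\|^4,\qquad C_\xi=O_P(1),
\]
which makes $\|y\|$ tight. The region $\|z\|>\delta$ is exponentially negligible, since $\Phi_N(z)\ge c(\delta)>0$ there uniformly with high probability: $\log\cosh x<x^2/2$ strictly, and the empirical averages converge uniformly on compacts, while $\Phi_N$ grows quadratically at infinity. Compared with the normalizing mass near $0$, which is of order $N^{-M/4}$, this regime contributes $e^{-cN}$. Combining the three steps gives convergence in $\P_\xi$-probability.
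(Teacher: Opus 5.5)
Your proof is correct, but it takes a different route from the paper.

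\textbf{Your route.} You stay with the first-moment bound: convexity of $\TV$ plus Pinsker, which is the paper's Lemma~\ref{lem:TV}, used there only for $\beta<1$. You then use the law of large numbers for $\widehat\Sigma_k$ to get $\sum_{i\le k}(u\cdot\xi_i)^2\le 2k\|u\|^2$. After that the only input you need at criticality is \emph{tightness} of $N^{1/4}u$ under $\widetilde Q_{N,1}$, because the integrand $\min\{1,C\sqrt k\|u\|\}$ is bounded by $1$ and $\sqrt k\,N^{-1/4}\to0$.

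\textbf{The paper's route.} The paper instead bounds $\TV$ by the $\chi^2$-distance. It writes $\E_{\pi^{\otimes k}}[L_N^2]$ as the two-replica integral $\E\exp\big(\sum_{i\le k}\Psi(u\cdot\xi_i,u'\cdot\xi_i)\big)$, expands $\Psi(x,y)=xy+R(x,y)$, and shows $\E\exp(A_N+B_N)\to1$. That step needs uniform integrability, i.e.\ exponential moments of $\|N^{1/4}u\|^2$, which the paper takes from Lemma~\ref{lem:HS_quartic}.

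\textbf{What each buys.} The paper's approach gives one second-moment framework that is reused for the lower bound in Theorem~\ref{thm:crit_break_sqrtN}, where an upper bound like yours cannot help. Your approach is more economical: no integrability beyond tightness is required. Your three-region analysis of $\Phi_N$ is a self-contained and correct proof of that tightness. It consists of the local quartic lower bound after $z=N^{-1/4}y$ with $C_\xi=O_P(1)$, strict positivity of the limiting potential on an annulus, quadratic growth at infinity, and normalizing mass of order $N^{-M/4}$.

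\textbf{The economy matters here.} Lemma~\ref{lem:HS_quartic} as stated cannot hold. Since $\cosh\ge1$, the density of $\widetilde Q_{N,1}$ is at least a constant times $e^{-N\|u\|^2/2}$, so $\E\exp(aN\|u\|^4)=\infty$ for every $a>0$. The claimed quartic lower bound on $N\Phi_N(x/N^{1/4})$ is only valid for $\|x\|\le\delta N^{1/4}$. What the $\chi^2$ argument essentially needs is the weaker statement $\E\exp(c\|N^{1/4}u\|^2)=O_{\P_\xi}(1)$ for each fixed $c$ and $N$ large. That statement is true, but proving it requires exactly the inner/intermediate/outer estimates you carry out. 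Your tightness-only argument avoids the issue altogether.
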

	
	And finally, we show
	\begin{theorem}[Critical-window breakdown at $\beta=1$ (fixed $M$)]
		\label{thm:crit_break_sqrtN}
		Fix $\beta=1$ and let $M\in\mathbb N$ be fixed.
		Let $k=k(N)$ satisfy
		\[
		\frac{k}{\sqrt N}\longrightarrow c\in(0,\infty).
		\]
		Then there exists a deterministic constant $b=b(c,M)>0$ such that
		\[
		\liminf_{N\to\infty} \ d_{\mathrm{TV}}\big(\mu_N^{(k)},\pi^{\otimes k}\big)\ \ge\ b
		\qquad\text{in }\mathbb P_\xi\text{-probability}.
		\]
		In particular, propagation of chaos fails in the critical window $k\asymp \sqrt N$,
		showing that the scaling in Theorem~\ref{thm:crit_poc} is optimal.
		
	\end{theorem}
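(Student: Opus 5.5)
We will prove Theorem~\ref{thm:crit_break_sqrtN} by exhibiting a single test statistic on the first $k$ spins whose law under $\mu_N^{(k)}$ differs from its law under $\pi^{\otimes k}$ by a macroscopic amount. The natural choice is the $M$-dimensional partial overlap
\[
S_k(\sigma)=\frac{1}{\sqrt k}\sum_{i=1}^k \sigma_i\,\xi_i\in\R^M .
\]
Under $\pi^{\otimes k}$ this has mean zero and covariance $\frac1k\sum_{i\le k}\xi_i\xi_i^T$, which tends to $I_M$ almost surely because $M$ is fixed. Hence $S_k\Rightarrow\mathcal N(0,I_M)$ and $\E\|S_k\|^2\to M$. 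Since total variation only decreases under measurable maps, it suffices to show that, in $\P_\xi$-probability, the law of $S_k$ under $\mu_N$ stays at a uniform positive distance from $\mathcal N(0,I_M)$.

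\textbf{Step 1: representation under $\mu_N$.} We use the Hubbard--Stratonovich transform. Under $\mu_N$, conditionally on an auxiliary variable $Y\in\R^M$, the spins are independent with $\sigma_i$ having mean $\tanh(\langle \xi_i,Y\rangle)$. The law of $Y$ has density proportional to $\exp(-N\Phi_N(y))$, where
\[
\Phi_N(y)=\tfrac12\|y\|^2-\tfrac1N\sum_i\log\cosh\langle\xi_i,y\rangle .
\]
At $\beta=1$ we expand $\Phi_N$ around $0$, using $\frac1N\sum_i\xi_i\xi_i^T=I+O(N^{-1/2})$. This is exactly the analysis behind the critical limit theorems of \cite{gentzloewe,gentzloewe2,talagrand_critical_hopf}, which we take as given: $N^{1/4}Y$ converges in distribution, in a quenched sense along subsequences, to a random non-degenerate limit $W$. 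The quartic term $\frac{1}{12N}\sum_i\langle\xi_i,y\rangle^4$ is of order $\|y\|^4$ and bounded below by $c\|y\|^4$. The disorder-dependent quadratic perturbation is of order $N^{-1/2}\|y\|^2$, which is exactly the critical scale. This is why the limit of $N^{1/4}Y$ is random but has a density bounded away from a point mass at $0$ uniformly in the disorder.

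\textbf{Step 2: computing the law of $S_k$.} Given $Y$, we write
\[
S_k=\frac1{\sqrt k}\sum_{i\le k}\xi_i\tanh\langle\xi_i,Y\rangle+(\text{a centred part}).
\]
Since $\|Y\|\asymp N^{-1/4}$, the drift equals $\frac1{\sqrt k}\sum_{i\le k}\xi_i\xi_i^T Y+O(\sqrt k\,\|Y\|^3)$, which is $\sqrt k\,Y(1+o(1))$. With $k=c\sqrt N$ we get $\sqrt k\,Y\approx \sqrt c\,N^{1/4}Y\Rightarrow\sqrt c\,W$. The conditional covariance is $I-O(\|Y\|^2)\to I$, and a conditional CLT (for example Lindeberg, given $Y$) shows that $S_k$ under $\mu_N$ is close in law to $Z+\sqrt c\,W$ with $Z\sim\mathcal N(0,I_M)$ independent of $W$.

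\textbf{Step 3: separation.} To avoid distributional convergence in total variation, we test with a bounded function $f(s)=\exp(-\|s\|^2/2)$. Then $\E f(Z+\sqrt c W)<\E f(Z)$ whenever $W\neq0$ with positive probability, because the Gaussian convolution of $f$ is strictly maximised at $0$. Since $\TV\ge\frac12|\E_{\mu}f-\E_{\pi}f|$, we obtain the bound with
\[
b=\inf\tfrac12\big(\E f(Z)-\E f(Z+\sqrt c W)\big).
\]
Here the infimum runs over the possible quenched limits $W$.

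\textbf{Main obstacle.} The key difficulty is showing that $b>0$ deterministically. This requires a uniform lower bound on $\E[1-e^{-c'\|W\|^2}]$ over all disorder realisations, holding in $\P_\xi$-probability. I would obtain it directly from Step 1. The random quadratic correction has norm $O_P(1)$ after rescaling, so with probability at least $1-\varepsilon$ it is bounded by some $K$. On that event the density of $N^{1/4}Y$ is at most $\exp(K\|x\|^2-c\|x\|^4)$ up to normalisation, which gives a uniform bound $\P(\|N^{1/4}Y\|\le\delta)\le C_K\delta^M$. This keeps $W$ away from $0$ uniformly and makes $b$ depend only on $c$ and $M$.
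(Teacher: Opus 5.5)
Your route is sound and genuinely different from the paper's. The paper never builds a distinguishing statistic. It works with the likelihood ratio $L_N$ and uses the replica identity of Corollary~\ref{cor:second_moment_exact_beta1} to get $\E_{\pi^{\otimes k}}[L_N^2]\approx\E[\exp(c\,V\cdot V')]\ge 1+\kappa$, with $V=N^{1/4}u$. It then bounds $\E_{\pi^{\otimes k}}[L_N^3]$ through exponential moments $\E e^{C'\|V\|^2}$ (Lemma~\ref{lem:HS_quartic}). Finally, it converts the $\chi^2$-gap into a total variation gap by H\"older interpolation (Lemma~\ref{lem:chi2_to_tv_Lp}), recycling the machinery of Theorem~\ref{thm:crit_poc}.

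You instead project onto the partial overlap $S_k$, which under $\mu_N$ is approximately $Z+\sqrt k\,u\approx Z+\sqrt c\,V$, and test with a bounded Gaussian function. This is more elementary and more informative. It shows that the breakdown is visible in a single $M$-dimensional statistic, and it gives an explicit bound in terms of $\E[1-e^{-c\|V\|^2/4}]$. It needs only a small-ball estimate for $V$ at the origin, with no exponential moments. That is a real gain: the paper's $L^3$ bound deteriorates when the disorder-induced quadratic term in the potential is large, whereas in your argument a large $\|V\|$ only increases the separation. The paper's claim $\E[\exp(c\,V\cdot V')]\ge 1+\kappa$ with $\kappa$ uniform in the disorder also needs exactly the non-degeneracy of $V$ that you single out as the main obstacle.

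Your last sentence, however, does not follow from what precedes it. Your $K$ bounds $A_N:=\sqrt N(\widehat\Sigma_N-I_M)$. For fixed $M\ge2$ this matrix is tight but not bounded with probability tending to one, since its off-diagonal entries are asymptotically standard Gaussian. Hence $K=K(\varepsilon)$, so $C_K$ and $b$ depend on $\varepsilon$; the same happens if you discard $\{\|V\|>R\}$ by tightness. As written you only get: for each $\varepsilon>0$ there is $b_\varepsilon>0$ with $\liminf_N\P_\xi\big(\TV(\mu_N^{(k)},\pi^{\otimes k})\ge b_\varepsilon\big)\ge1-\varepsilon$. The paper has the same defect: its deterministic $c_2$ in Lemma~\ref{lem:HS_quartic} would force $\lambda_{\max}(A_N)\le 2c_2$ on $\mathcal H_N$.

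You can repair this at finite $N$, with no limiting $W$.

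\emph{Small-ball bound.} Since $(\xi_i^\mu)^2=1$, $\widehat\Sigma_N$ has unit diagonal, so $A_N$ has trace zero and $\lambda_1:=\lambda_{\max}(A_N)\ge0$. From $t^2/2-t^4/12\le\log\cosh t\le t^2/2$, the unnormalised density of $V$ lies between $\exp(\frac12x^\top A_Nx-\frac{M^2}{12}\|x\|^4)$ and $\exp(\frac12x^\top A_Nx)$. Write $x=te+z$ with $e$ a top eigenvector, and compare the region $\{|t|\le\delta,\|z\|\le\delta\}$ with $\{\frac12\le|t|\le1,\|z\|\le\delta\}$. For $\delta\le\frac12$ this gives
\[
\widetilde Q_{N,1}\big(\|V\|\le\delta\big)\le 2\delta\,e^{\lambda_1\delta^2/2-\lambda_1/8}\,e^{M^2/6}\le 2e^{M^2/6}\,\delta
\]
for every $N$ and every realisation of the patterns.

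\emph{Large $\|V\|$.} Handle this by the drift bound rather than by tightness. For $\|u\|\le\epsilon$, the bound $t\tanh t\ge t^2-t^4/3$ together with $\widehat\Sigma_k\approx I_M$ gives $\|k^{-1/2}\sum_{i\le k}\xi_i\tanh(u\cdot\xi_i)\|\ge\frac12\sqrt k\,\|u\|$. The remaining event $\{\|u\|>\epsilon\}$ has $\widetilde Q_{N,1}$-mass tending to zero with $\P_\xi$-probability tending to one, by a standard large-deviation estimate.

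With these two ingredients your $b$ depends only on $c$ and $M$.
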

	\subsection{Outline of the proofs}
	
	A central idea in our proofs is that the Hopfield Gibbs measure admits,
	via a Hubbard--Stratonovich transformation, an explicit representation
	in terms of a mixture of product measures.
	
	\medskip
	\noindent\textit{Step 1: Mixture-of-products representation.}
	For fixed (quenched) patterns $(\xi^\mu)_\mu$, the quadratic Hamiltonian
	$\frac{\beta N}{2}\|m_N(\sigma)\|^2$ can be linearized by introducing an
	auxiliary Gaussian field. This standard technqie for quadratic Hamiltonians yields a representation of the form
	\[
	\mu_N(d\sigma)\;=\;\int \nu_{N,\beta}(d y)\,
	\bigotimes_{i=1}^N \mu_{y,i}(d\sigma_i),
	\]
	where $y\in\R^M$ is the Hubbard--Stratonovich field, $\nu_{N,\beta}$ is an
	explicit probability measure on $\R^M$, and $\mu_{y,i}$ is a Bernoulli law
	with bias depending on $y\cdot \xi_i$.
	Consequently, the $k$-spin marginal $\mu_N^{(k)}$ is a mixture of product
	measures on $\{-1,+1\}^k$.
	
	\medskip
	\noindent\textit{Step 2: Propagation of chaos reduces to stability of the mixture.}
	Conditionally on $y$, the spins are independent.
	Thus propagation of chaos for $\mu_N$ is controlled by how strongly the
	random field $y$ fluctuates under $\nu_{N,\beta}$ and by how sensitively
	the single-spin biases $\mu_{y,i}$ depend on $y$.
	Quantitatively, we compare the mixture to the unbiased product law
	$\pi^{\otimes k}$ in total variation distance.
	
	\medskip
	\noindent\textit{Step 3: High temperature $\beta<1$.}
	In the regime $\beta<1$, the Hubbard--Stratonovich field typically remains
	of order one and concentrates near the origin.
	A Taylor expansion of the single-spin biases and a control of
	$\nu_{N,\beta}$ show that correlations between $k$ spins are of order
	$kM/N$, yielding Theorem~\ref{thm:highT} under the condition $kM/N\to0$.
	For macroscopic $k$ (with $k/N\to\rho$), the same representation allows us
	to exhibit order-one correlations induced by the random field, which leads
	to strong breakdown in total variation (Theorem~\ref{thm:stop_macro_Msmall})
	under the stated assumptions on $M$.
	
	\medskip
	\noindent\textit{Step 4: Critical temperature $\beta=1$ and the critical window.}
	At criticality, the mixing measure $\nu_{N,1}$ develops non-Gaussian
	fluctuations on the scale $N^{-1/4}$, reflecting the well-known
	critical behavior of the overlap.
	This amplification of the mixing fluctuations reduces the admissible growth
	of the marginals to $k=o(N^{1/4})$ (Theorem~\ref{thm:crit_poc}).
	Moreover, when $k\asymp \sqrt N$, the mixture retains a nontrivial
	amount of randomness that produces correlations bounded away from zero,
	yielding a breakdown of propagation of chaos in the critical window
	(Theorem~\ref{thm:crit_break_sqrtN}) and hence the optimality of the scaling.

	\section{Preliminaries}
	
	\subsection{Mixture representation}
	A key tool in our proofs will be a representation
	of the Gibbs measure as a mixture of products of 
	Bernoulli measures. This is a direct consequence of the Hubbard–Stratonovich transformation.

	\begin{lemma}[Hubbard--Stratonovich mixture]
		\label{lem:HS}
		Fix $\beta>0$. For every realization of $\xi$, there exists a probability measure $Q_N$
		on $\R^M$ such that
		\[
		\mu_N(d\sigma)
		=
		\int_{\R^M} Q_N(du)
		\bigotimes_{i=1}^N \mu_i^{(u)}(d\sigma_i),
		\]
		where 	is the Bernoulli measure on $\{-1,+1\}$ given by
		\[
		\mu_i^{(u)}(\sigma_i)
		=
		\frac{\exp(\sigma_i\,u\cdot\xi_i)}
		{2\cosh(u\cdot\xi_i)},
		\qquad \sigma_i\in\{-1,+1\}
		\]
		and
		\[
		Q_N(du)
		=
		\frac{1}{\mathcal Z_N}
		\exp\Big(-\frac{N}{2\beta}\|u\|^2\Big)
		\prod_{i=1}^N 2\cosh(u\cdot\xi_i)\,du,
		\]
		and $\mathcal Z_N$ is the normalizing constant to turn $Q_N$ into a probability measure: 
		$$
		\mathcal Z_N = \int_{\R^M} \exp\Big(-\frac{N}{2\beta}\|u\|^2\Big)
		\prod_{i=1}^N 2\cosh(u\cdot\xi_i)\, du.
		$$
	\end{lemma}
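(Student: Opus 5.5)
The plan is to linearize the quadratic Hamiltonian with the Gaussian identity
\[
\exp\Big(\frac{\beta N}{2}\|x\|^2\Big)=\Big(\frac{N}{2\pi\beta}\Big)^{M/2}\int_{\R^M}\exp\Big(-\frac{N}{2\beta}\|u\|^2+N\,u\cdot x\Big)\,du ,\qquad x\in\R^M .
\]
This follows by completing the square, $-\frac{N}{2\beta}\|u-\beta x\|^2+\frac{\beta N}{2}\|x\|^2$, and evaluating the normalized Gaussian integral. We apply it with $x=m_N(\sigma)$ for each fixed $\sigma$. Since $N\,u\cdot m_N(\sigma)=\sum_{i=1}^N\sigma_i\,u\cdot\xi_i$, this gives
\[
\exp\Big(\frac{\beta N}{2}\|m_N(\sigma)\|^2\Big)=C_{N,M,\beta}\int_{\R^M}e^{-\frac{N}{2\beta}\|u\|^2}\prod_{i=1}^N e^{\sigma_i\,u\cdot\xi_i}\,du ,
\]
where $C_{N,M,\beta}=(N/(2\pi\beta))^{M/2}$ does not depend on $\sigma$.

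Next, inside the integral we multiply and divide by $\prod_i 2\cosh(u\cdot\xi_i)$. The integrand then becomes
\[
e^{-\frac{N}{2\beta}\|u\|^2}\prod_i 2\cosh(u\cdot\xi_i)\cdot\prod_i\mu_i^{(u)}(\sigma_i).
\]
Summing over $\sigma\in\{-1,+1\}^N$ is allowed because the sum is finite. Since each $\mu_i^{(u)}$ is a probability measure on $\{-1,+1\}$, this gives $Z_N=C_{N,M,\beta}\,\mathcal Z_N$. Dividing the previous display by $Z_N$ yields exactly $\mu_N(\sigma)=\int Q_N(du)\prod_i\mu_i^{(u)}(\sigma_i)$, with $Q_N$ as stated.

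It remains to check that $Q_N$ is a well-defined probability measure. Its density is positive and continuous. Using $2\cosh t\le 2e^{|t|}$ and $|u\cdot\xi_i|\le\sqrt M\|u\|$ (because $\|\xi_i\|=\sqrt M$), the density is bounded by $2^N e^{-\frac{N}{2\beta}\|u\|^2+N\sqrt M\|u\|}$, which is integrable. Hence $0<\mathcal Z_N<\infty$.

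There is no real obstacle here. The only points needing care are that the Gaussian constant $C_{N,M,\beta}$ is independent of $\sigma$, so that it cancels against $Z_N$, and the integrability check above, which justifies interchanging the finite sum with the integral.
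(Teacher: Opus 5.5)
Your proposal is correct and follows essentially the same route as the paper: you linearize with the Gaussian (Hubbard--Stratonovich) identity, whose constant $(N/(2\pi\beta))^{M/2}$ does not depend on $\sigma$, sum over $\sigma$ to obtain $Z_N=(N/(2\pi\beta))^{M/2}\mathcal Z_N$, and multiply and divide by $\prod_i 2\cosh(u\cdot\xi_i)$ to read off $Q_N$ and $\mu_i^{(u)}$. Your explicit check that $0<\mathcal Z_N<\infty$ is a small addition that the paper leaves implicit.
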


	\begin{proof}
		We start from the definition of the Hopfield Gibbs measure
		\[
		\mu_N(\sigma)
		=
		\frac{1}{Z_N}
		\exp\Big(\frac{\beta N}{2}\|m_N(\sigma)\|^2\Big),
		\qquad
		\sigma\in\{-1,+1\}^N,
		\]
		where
		$
		m_N(\sigma)=\frac1N\sum_{i=1}^N \sigma_i \xi_i \in \mathbb R^M .
		$
		
		Now by the Hubbard-Stratonovich transformation (i.e.\ completing the square in the exponents)
		\[
		\exp\Big(\frac{\beta N}{2}\|m_N(\sigma)\|^2\Big)
		=
		\Big(\frac{N}{2\pi\beta}\Big)^{M/2}
		\int_{\mathbb R^M}
		\exp\Big(
		-\frac{N}{2\beta}\|u\|^2 + N u\cdot m_N(\sigma)
		\Big)\,du .
		\]
		Moreover, by definition of $m_N(\sigma)$,
		$N u\cdot m_N(\sigma)=
		\sum_{i=1}^N \sigma_i \, (u\cdot \xi_i),$
		where $\xi_i$ is the M-dimensional vector $(\xi_i^\nu)_{\nu=1}^M$. 
		Hence
		\[
		\exp\Big(\frac{\beta N}{2}\|m_N(\sigma)\|^2\Big)
		=
		\Big(\frac{N}{2\pi\beta}\Big)^{M/2}
		\int_{\mathbb R^M}
		\exp\Big(
		-\frac{N}{2\beta}\|u\|^2
		\Big)
		\prod_{i=1}^N
		\exp\big(\sigma_i (u\cdot \xi_i)\big)
		\,du .
		\]
		Substituting the previous representation into the partition function and using Fubini's theorem, we obtain
		\begin{align*}
			Z_N
			=&\,
			\Big(\frac{N}{2\pi\beta}\Big)^{M/2}
			\int_{\mathbb R^M}
			\exp\Big(-\frac{N}{2\beta}\|u\|^2\Big)
			\sum_{\sigma\in\{-1,+1\}^N}
			\prod_{i=1}^N \exp\big(\sigma_i (u\cdot \xi_i)\big)
			\,du \\
			=&\,
			\Big(\frac{N}{2\pi\beta}\Big)^{M/2}
			\int_{\mathbb R^M}
			\exp\Big(-\frac{N}{2\beta}\|u\|^2\Big)
			\prod_{i=1}^N 2\cosh(u\cdot \xi_i)
			\,du 
		\end{align*}
		Returning to $\mu_N(\sigma)$ and inserting the same representation in the numerator, we obtain
		\[
		\mu_N(\sigma)
		=
		\int_{\mathbb R^M}
		\frac{
			\exp\big(-\frac{N}{2\beta}\|u\|^2\big)
			\prod_{i=1}^N \exp(\sigma_i (u\cdot \xi_i))
		}{
			\int_{\mathbb R^M}
			\exp\big(-\frac{N}{2\beta}\|v\|^2\big)
			\prod_{i=1}^N 2\cosh(v\cdot \xi_i)\,dv
		}
		\,du .
		\]
		Writing
		\[
		\exp(\sigma_i (u\cdot \xi_i))
		=
		\frac{\exp(\sigma_i (u\cdot \xi_i))}{2\cosh(u\cdot \xi_i)}
		\cdot 2\cosh(u\cdot \xi_i),
		\]
		we may rewrite $\mu_N$ as
		\[
		\mu_N(d\sigma)
		=
		\int_{\mathbb R^M} Q_N(du)
		\bigotimes_{i=1}^N \mu_i^{(u)}(d\sigma_i),
		\]
		where
		\[
		\mu_i^{(u)}(\sigma_i)
		=
		\frac{\exp(\sigma_i (u\cdot \xi_i))}{2\cosh(u\cdot \xi_i)},
		\]
		and where $Q_N$ is the probability measure on $\mathbb R^M$ with density
		\[
		Q_N(du)
		=
		\frac{
			\exp\big(-\frac{N}{2\beta}\|u\|^2\big)
			\prod_{i=1}^N 2\cosh(u\cdot \xi_i)
		}{
			\int_{\mathbb R^M}
			\exp\big(-\frac{N}{2\beta}\|v\|^2\big)
			\prod_{i=1}^N 2\cosh(v\cdot \xi_i)\,dv
		}
		\,du .
		\]
		This completes the proof.
	\end{proof}
	\begin{corollary}
		For $k\le N$, the $k$-marginal admits the representation
		\[
		\mu_N^{(k)}(d\sigma_1\cdots d\sigma_k)
		=
		\int_{\R^M} Q_N(du)\, \bigotimes_{i=1}^k \mu_i^{(u)}(d\sigma_i).
		\]
	\end{corollary}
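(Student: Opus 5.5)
The corollary follows from Lemma~\ref{lem:HS} by marginalizing; the only issue is exchanging the sum over the discarded spins with the $Q_N$-integral. Fix a realization of $\xi$ and a configuration $(\sigma_1,\dots,\sigma_k)\in\{-1,+1\}^k$. By definition of the marginal,
\[
\mu_N^{(k)}(\sigma_1,\dots,\sigma_k)=\sum_{\sigma_{k+1},\dots,\sigma_N\in\{-1,+1\}}\mu_N(\sigma_1,\dots,\sigma_N).
\]
Into each summand we insert the mixture representation of Lemma~\ref{lem:HS}:
\[
\mu_N(\sigma)=\int_{\R^M} Q_N(du)\prod_{i=1}^N\mu_i^{(u)}(\sigma_i).
\]

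Next we exchange the finite sum over $(\sigma_{k+1},\dots,\sigma_N)$ with the integral against $Q_N$. This is legitimate by linearity of the integral, since there are only $2^{N-k}$ terms. Alternatively, Tonelli's theorem applies because every integrand is nonnegative and bounded by $1$, and $Q_N$ is a probability measure.

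After the exchange, the integrand factorizes as
\[
\prod_{i=1}^k\mu_i^{(u)}(\sigma_i)\cdot\prod_{i=k+1}^N\Big(\sum_{\sigma_i=\pm1}\mu_i^{(u)}(\sigma_i)\Big).
\]
For every $u$ and every $i$, each inner sum equals
\[
\frac{e^{u\cdot\xi_i}+e^{-u\cdot\xi_i}}{2\cosh(u\cdot\xi_i)}=1.
\]
The second product is therefore identically $1$, and we obtain
\[
\mu_N^{(k)}(\sigma_1,\dots,\sigma_k)=\int_{\R^M}Q_N(du)\prod_{i=1}^k\mu_i^{(u)}(\sigma_i).
\]
This is the claimed identity on every atom of $\{-1,+1\}^k$, and hence as an identity of measures.

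There is no real obstacle. The one point worth stating is that the mixing measure $Q_N$ does not depend on $k$: the discarded spins integrate out exactly, because each $\mu_i^{(u)}$ is already normalized.
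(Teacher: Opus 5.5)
Your proof is correct and is essentially the argument the paper intends: the corollary is stated without separate proof as an immediate consequence of Lemma~\ref{lem:HS}, obtained by summing out $\sigma_{k+1},\dots,\sigma_N$ inside the mixture and using that each $\mu_i^{(u)}$ is a probability measure on $\{-1,+1\}$. The same marginalization appears explicitly later in the proof of Lemma~\ref{lem:likelihood_k}, carried out directly in the Hubbard--Stratonovich integral, where it yields the factors $2\cosh(u\cdot\xi_i)$.
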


	\subsection{Total variation bound for the $k$-marginal}
	
	As we will always be interested in the total variation distance between
	the distribution of the marginal spins and a (Rademacher) product measure,
	the following consideration is important.

	\medskip
	Let $\mathbb P^{(u)}_{[k]}$ denote the law of
	$(\sigma_1,\dots,\sigma_k)$ under the measure
	$\bigotimes_{i=1}^N \mu_i^{(u)}$ (restricted to the first $k$ coordinates) from the previous subsection.
	
	\begin{lemma}
		\label{lem:TV}
		For every $u\in\R^M$,
		$$
		\TV\big(\mathbb P^{(u)}_{[k]},\pi^{\otimes k}\big)
		\le
		\sqrt{\sum_{i=1}^k (u\cdot\xi_i)^2}.
		$$
		Consequently,
		\[
		\TV\big(\mu_N^{(k)},\pi^{\otimes k}\big)
		\le
		\E_{Q_N}\left[\sqrt{\sum_{i=1}^k (u\cdot\xi_i)^2}\right].
		\]
	\end{lemma}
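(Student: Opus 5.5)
We bound the distance between the two product measures through a standard divergence inequality, and then pass to the mixture using convexity of total variation. Under $\mathbb P^{(u)}_{[k]}$ the coordinates $\sigma_1,\dots,\sigma_k$ are independent with laws $\mu_i^{(u)}$. We write $h_i:=u\cdot\xi_i$ for the local field acting on spin $i$.

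\emph{Step 1 (single-spin divergence).} For the Bernoulli law $\mu_i^{(u)}(\pm1)=e^{\pm h_i}/(2\cosh h_i)$ we compute the Kullback--Leibler divergence with respect to $\pi$:
\[
D\big(\mu_i^{(u)}\,\|\,\pi\big)=\sum_{s=\pm1}\mu_i^{(u)}(s)\log\frac{e^{sh_i}}{\cosh h_i}
= h_i\tanh h_i-\log\cosh h_i .
\]
Set $f(h)=h\tanh h-\log\cosh h$. Then $f(0)=0$ and $f'(h)=h\,\mathrm{sech}^2 h$. Since $\mathrm{sech}^2\le 1$, this gives $0\le f(h)\le h^2/2$.

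\emph{Step 2 (tensorization and Pinsker).} The KL divergence is additive over product measures, so
\[
D\big(\mathbb P^{(u)}_{[k]}\,\|\,\pi^{\otimes k}\big)=\sum_{i=1}^k f(h_i)\le \tfrac12\sum_{i=1}^k h_i^2 .
\]
Pinsker's inequality states $\TV\le\sqrt{D/2}$. It therefore yields
\[
\TV\big(\mathbb P^{(u)}_{[k]},\pi^{\otimes k}\big)\le \tfrac12\Big(\sum_{i=1}^k h_i^2\Big)^{1/2},
\]
which is even stronger than the claimed bound by a factor $1/2$.

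\emph{Step 3 (mixture).} By the Corollary following Lemma~\ref{lem:HS}, the marginal is a mixture:
\[
\mu_N^{(k)}=\int Q_N(du)\,\mathbb P^{(u)}_{[k]} .
\]
Total variation distance to a fixed measure is convex in its first argument, and so for any event $A$ we have
\[
\big|\mu_N^{(k)}(A)-\pi^{\otimes k}(A)\big|\le\int Q_N(du)\,\big|\mathbb P^{(u)}_{[k]}(A)-\pi^{\otimes k}(A)\big| .
\]
Taking the supremum over $A$ and inserting Step 2 gives the second assertion.

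There is no real obstacle in this argument. The only point requiring care is the elementary inequality $f(h)\le h^2/2$, which follows from the derivative bound in Step 1. Measurability of $u\mapsto\mathbb P^{(u)}_{[k]}(A)$ is clear because this map is continuous in $u$.
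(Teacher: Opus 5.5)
Your proof is correct and follows the paper's argument step for step: the explicit single-spin divergence $h_i\tanh h_i-\log\cosh h_i$, tensorization of relative entropy, Pinsker's inequality, and convexity of total variation over the Hubbard--Stratonovich mixture. The only difference is the scalar estimate. Your observation $f'(h)=h\,\mathrm{sech}^2 h$ gives $f(h)\le h^2/2$ in one line, and hence the bound with an extra factor $\tfrac12$, while the paper proves the weaker $f(h)\le 2\tanh^2 h\le 2h^2$ by a less direct monotonicity argument.
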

	
	\begin{proof}
		For each $i$, $\mu_i^{(u)}$ is a Bernoulli law on $\{-1,+1\}$
		with mean $\tanh(u\cdot\xi_i)$. Indeed, we have
		$\mu_i^{(u)}(\sigma_i)
		=
		\frac{\exp(\sigma_i h_i)}{2\cosh(h_i)}$, where 
		$h_i := u \cdot \xi_i$. Hence
		\begin{align*}
			\mathbb E_{\mu_i^{(u)}}[\sigma_i]=		
			\frac{e^{h_i}-e^{-h_i}}{e^{h_i}+e^{-h_i}}
			=
			\tanh(h_i)
			=
			\tanh(u\cdot\xi_i).
		\end{align*}
		
		We recall that for two probability measures $\nu,\varrho$ on a finite set
		$\mathcal X$ with $\nu\ll \varrho$, the Kullback--Leibler divergence
		(relative entropy) is defined by
		\[
		\mathrm{KL}(\nu\|\varrho)
		:=
		\sum_{x\in\mathcal X}\nu(x)\log\frac{\nu(x)}{\varrho(x)} \in [0,\infty).
		\]
		In particular, for $\mathcal X=\{-1,+1\}$ and $\pi(\pm1)=\frac12$, we have
		$$
		\mathrm{KL}(\mu_i^{(u)}\|\pi)
		=
		\sum_{\sigma_i=\pm1} \mu_i^{(u)}(\sigma_i)\,
		\log\Big( 2\,\mu_i^{(u)}(\sigma_i)\Big).
		$$
		
		Let again $h_i:=u\cdot\xi_i$ and recall that
		$
		\mu_i^{(u)}(\sigma_i)
		=
		\frac{e^{\sigma_i h_i}}{2\cosh(h_i)}.
		$
		Then
		$
		2\,\mu_i^{(u)}(\sigma_i)
		=
		\frac{e^{\sigma_i h_i}}{\cosh(h_i)}$,
		and hence
		$\log\Big(2\,\mu_i^{(u)}(\sigma_i)\Big)
		=
		\sigma_i h_i - \log\cosh(h_i).
		$
		Plugging this into the definition of $\mathrm{KL}$ gives
		\[
		\begin{aligned}
			\mathrm{KL}(\mu_i^{(u)}\|\pi)
			&=
			\sum_{\sigma_i=\pm1}
			\mu_i^{(u)}(\sigma_i)\,
			\big(\sigma_i h_i-\log\cosh(h_i)\big)
			\\
			&=
			h_i \sum_{\sigma_i=\pm1}\sigma_i\,\mu_i^{(u)}(\sigma_i)
			\;-\;
			\log\cosh(h_i)\sum_{\sigma_i=\pm1}\mu_i^{(u)}(\sigma_i)
			\\
			&=
			h_i\,\E_{\mu_i^{(u)}}[\sigma_i] - \log\cosh(h_i).
		\end{aligned}
		\]
		Since (as computed above) $\E_{\mu_i^{(u)}}[\sigma_i]=\tanh(h_i)$, we obtain
		\[
		\mathrm{KL}(\mu_i^{(u)}\|\pi)
		=
		h_i\tanh(h_i) - \log\cosh(h_i).
		\]
		
		\medskip
		
		Now for all $x\in\mathbb R$,
		\begin{equation}\label{eq:tanhineq}
			h\tanh(h)-\log\cosh(h) \le 2\tanh^2(h).
		\end{equation}
		Indeed, define $f(h)$ as:
		
		$f(h) = 2\tanh^2(h) - \left( h\tanh(h) - \log\cosh(h) \right).$
		By symmetry if suffices to show $f(h) \ge 0$ for all $h \ge 0$. Note that $f(0) = 0$
		and
		$
		f'(h) = (4\tanh(h) - h)\operatorname{sech}^2(h).$
		Since $\operatorname{sech}^2(h) > 0$ for all $h$, the sign of $f'(h)$ is determined by the term $g(h) = 4\tanh(h) - h$.
		Now, at $h=0$ also $g(0) = 0$, implying $f'(0) = 0$. For $h$ small enough,
		$g(h) \approx 4h - h = 3h > 0$. Thus, $f(h)$ is initially increasing. On the other hand, as $h \to \infty$, $g(h)$ eventually becomes negative, and $f(h)$ will eventually decrease.
		To ensure $f(h)$ remains non-negative, note that for $h \to \infty$ we have $\tanh(h) \to 1$ and $\log\cosh(h) \approx h - \log 2$ and thus
		\begin{align*}
			\lim_{h \to \infty} f(h) &= \lim_{h \to \infty} \left[ 2\tanh^2(h) - (h\tanh(h) - \log\cosh(h)) \right] \\
			&= 2 - \lim_{h \to \infty} (h - (h - \log 2)) >0
		\end{align*}
		
		This shows \eqref{eq:tanhineq}.
		
		In particular,
		\[
		\mathrm{KL}(\mu_i^{(u)}\|\pi)\le 2\tanh^2(h_i)\le 2h_i^2 = 2(u\cdot\xi_i)^2.
		\]
		
		Recall that  $\mathbb P^{(u)}_{[k]}:=\bigotimes_{i=1}^k \mu_i^{(u)}$ and that
		relative entropy tensorizes over products, i.e.
		$
		\mathrm{KL}\big(\mathbb P^{(u)}_{[k]}\,\big\|\,\pi^{\otimes k}\big)
		=
		\sum_{i=1}^k \mathrm{KL}(\mu_i^{(u)}\|\pi).
		$
		Moreover, Pinsker's inequality states that for any probability measures
		$\nu,\varrho$,
		\[
		\TV(\nu,\varrho)\le \sqrt{\frac12\,\mathrm{KL}(\nu\|\varrho)}.
		\]
		Applying Pinsker with $\nu=\mathbb P^{(u)}_{[k]}$ and $\varrho=\pi^{\otimes k}$
		and using the previous bounds yields
		\begin{equation}\label{eq:boundTV}
			\TV\big(\mathbb P^{(u)}_{[k]},\pi^{\otimes k}\big)
			\le
			\sqrt{\frac12 \sum_{i=1}^k \mathrm{KL}(\mu_i^{(u)}\|\pi)}
			\le
			\sqrt{\sum_{i=1}^k (u\cdot\xi_i)^2}.
		\end{equation}
		
		By Lemma~\ref{lem:HS}, the $k$-spin marginal of
		$\mu_N$ satisfies
		$$
		\mu_N^{(k)}(\cdot)
		=
		\int_{\mathbb R^M} Q_N(du)\,\mathbb P^{(u)}_{[k]}(\cdot),
		$$
		i.e.\ $\mu_N^{(k)}$ is a convex combination (mixture) of the measures
		$\mathbb P^{(u)}_{[k]}$.
		Recall the characterization of total variation as
		\[
		\TV(\nu,\varrho)=\sup_{A\subseteq\{-1,+1\}^k}\big|\nu(A)-\varrho(A)\big|.
		\]
		Moreover, for any measurable $A\subseteq\{-1,+1\}^k$,
		\[
		\mu_N^{(k)}(A)-\pi^{\otimes k}(A)
		=
		\int Q_N(du)\,\big(\mathbb P^{(u)}_{[k]}(A)-\pi^{\otimes k}(A)\big),
		\]
		and therefore
		\[
		\big|\mu_N^{(k)}(A)-\pi^{\otimes k}(A)\big|
		\le
		\int Q_N(du)\,\big|\mathbb P^{(u)}_{[k]}(A)-\pi^{\otimes k}(A)\big|
		\le
		\int Q_N(du)\,\TV\big(\mathbb P^{(u)}_{[k]},\pi^{\otimes k}\big).
		\]
		Taking the sup over all $A$ yields the convexity bound
		\[
		\TV\big(\mu_N^{(k)},\pi^{\otimes k}\big)
		\le
		\int Q_N(du)\,\TV\big(\mathbb P^{(u)}_{[k]},\pi^{\otimes k}\big).
		\]
		
		It remains to bound $\TV\big(\mathbb P^{(u)}_{[k]},\pi^{\otimes k}\big)$ which is done by \eqref{eq:boundTV}.
		\[
		\TV\big(\mu_N^{(k)},\pi^{\otimes k}\big)
		\le
		\int Q_N(du)\,\sqrt{\sum_{i=1}^k (u\cdot\xi_i)^2}
		=
		\E_{Q_N}\Big[\sqrt{\sum_{i=1}^k (u\cdot\xi_i)^2}\Big],
		\]
		as claimed.
	\end{proof}
	
	Lemma~\ref{lem:TV} reduces the problem of propagation of chaos
	to estimating moments of the random variable
	$\sum_{i=1}^k (u\cdot\xi_i)^2$ under the mixing measure $Q_N$,
	which will be analyzed separately in the high-temperature
	and critical regimes.

	\subsection{Control of the mixing measure at high temperature}
	We next establish a Gaussian domination bound for the mixing measure $Q_N(du)$
	on a disorder event (i.e.\ an event formulated in the patterns $(\xi)$) of high probability.

	Let
	\[
	\widehat\Sigma_N = \frac1N\sum_{i=1}^N \xi_i\xi_i^\top .
	\]
	
	\begin{lemma}
		\label{lem:QN_domination}
		Fix $\beta<1$ and choose $\varepsilon>0$ such that $\beta(1+\varepsilon)<1$.
		On the event
		\[
		\mathcal E_N(\varepsilon)
		=
		\{\|\widehat\Sigma_N\|_{\mathrm{op}}\le 1+\varepsilon\},
		\]
		there exist constants $\alpha>0$ and $C_0<\infty$
		(depending only on $\beta,\varepsilon$) such that
		\[
		Q_N(A)
		\le
		C_0\,\gamma_\alpha(A)
		\qquad
		\text{for all measurable } A\subseteq\mathbb R^M,
		\]
		where $\gamma_\alpha=\mathcal N(0,(2\alpha N)^{-1}I_M)$.
	\end{lemma}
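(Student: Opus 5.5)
The plan is a direct pointwise comparison of densities. The Lebesgue density of $Q_N$ from Lemma~\ref{lem:HS} will be bounded above by a multiple of the Gaussian density of $\gamma_\alpha$; integrating that pointwise bound over $A$ then gives the claim. The two ingredients are an upper bound on the unnormalized density and a lower bound on $\mathcal Z_N$.

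\emph{Upper bound on the numerator.} I will use the elementary inequality $\log\cosh x\le x^2/2$, valid for all real $x$. It gives
\[
\prod_{i=1}^N 2\cosh(u\cdot\xi_i)\le 2^N\exp\Big(\tfrac12\sum_{i=1}^N (u\cdot\xi_i)^2\Big)=2^N\exp\Big(\tfrac N2\, u^\top\widehat\Sigma_N u\Big).
\]
On $\mathcal E_N(\varepsilon)$ the exponent is at most $\frac N2(1+\varepsilon)\|u\|^2$. Hence the unnormalized density is at most $2^N\exp(-\alpha N\|u\|^2)$, where
\[
\alpha:=\tfrac12\big(\beta^{-1}-(1+\varepsilon)\big).
\]
This $\alpha$ is strictly positive precisely because $\beta(1+\varepsilon)<1$.

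\emph{Lower bound on $\mathcal Z_N$.} Since $\cosh\ge1$, we have
\[
\mathcal Z_N\ge 2^N\int_{\R^M}\exp\big(-\tfrac{N}{2\beta}\|u\|^2\big)\,du=2^N\big(\tfrac{2\pi\beta}{N}\big)^{M/2}.
\]
Combining the two bounds, the density of $Q_N$ is at most $(N/2\pi\beta)^{M/2}e^{-\alpha N\|u\|^2}$. The density of $\gamma_\alpha$ is $(\alpha N/\pi)^{M/2}e^{-\alpha N\|u\|^2}$. The ratio of the two is $(2\alpha\beta)^{-M/2}=(1-\beta(1+\varepsilon))^{-M/2}$, and integrating over $A$ yields $Q_N(A)\le C_0\gamma_\alpha(A)$ with this $C_0$.

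\emph{Main obstacle.} This crude argument gives a constant depending only on $\beta,\varepsilon$ exactly when $M$ is fixed. For $M=M(N)\to\infty$ it produces a factor exponential in $M$, because the lower bound $\cosh\ge1$ discards the Gaussian gain in the normalization. To remove this factor I would also lower-bound $\mathcal Z_N$ using $\log\cosh x\ge x^2/2-x^4/12$. I would work on the high-probability event where $\widehat\Sigma_N$ has all eigenvalues in $[1-\varepsilon_N,1+\varepsilon_N]$ with $\varepsilon_N\asymp\sqrt{M/N}$; such an event is available by standard covariance concentration for $M=o(N)$. On the relevant scale $\|u\|\asymp\sqrt{M/N}$, the quartic term contributes $O(M^2/N)$. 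The variance mismatch between the upper and lower Gaussian bounds then contributes a factor $\exp(O(M\varepsilon_N))$.

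One would then compare $Q_N$ with a Gaussian of slightly smaller $\alpha$, and absorb the residual factor $\big((\beta^{-1}-1+\varepsilon_N)/(\beta^{-1}-1-\varepsilon_N)\big)^{M/2}$. If this absorption fails uniformly in $M$, I would use the honest fallback: $C_0=(1-\beta(1+\varepsilon))^{-M/2}$. For the downstream application, Lemma~\ref{lem:TV} only needs moments of $\sum_{i\le k}(u\cdot\xi_i)^2$, and for those a tail comparison off a ball suffices rather than domination on all sets $A$. Checking which of these two routes actually gives an $M$-independent $C_0$ is the step I expect to require the most care.
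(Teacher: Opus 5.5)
Your core argument is the paper's: on $\mathcal E_N(\varepsilon)$, bound $\prod_i 2\cosh(u\cdot\xi_i)\le 2^N e^{\frac N2(1+\varepsilon)\|u\|^2}$ and push the normalization into $C_0$. The paper just sets $C_0:=Z_G/Z_Q$ and never bounds $Z_Q=\mathcal Z_N$ from below. Your bound $\mathcal Z_N\ge 2^N(2\pi\beta/N)^{M/2}$ supplies that missing step, and $C_0=(1-\beta(1+\varepsilon))^{-M/2}$ is correct. So for fixed $M$ your proof is complete, and more complete than the paper's.

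Your planned repair for growing $M$ cannot work, because the $M$-uniform claim is false; the problem is not an artifact of using $\cosh\ge 1$. Both densities are continuous, so domination on all sets forces $C_0\ge \frac{dQ_N}{d\gamma_\alpha}(0)=\frac{2^N}{\mathcal Z_N}\big(\frac{\pi}{\alpha N}\big)^{M/2}$. Now use $\log\cosh x\le x^2/2$ as an \emph{upper} bound on $\mathcal Z_N$:
\[
\mathcal Z_N\le 2^N\Big(\frac{2\pi}{N}\Big)^{M/2}\det\big(\beta^{-1}I_M-\widehat\Sigma_N\big)^{-1/2},
\qquad\text{so}\qquad
C_0\ \ge\ \det\Big(\frac{\beta^{-1}I_M-\widehat\Sigma_N}{2\alpha}\Big)^{1/2}.
\]
Take the event $\{\|\widehat\Sigma_N-I_M\|_{\op}\le\eta\}$, whose $\P_\xi$-probability tends to one by Lemma~\ref{lem:covconcentration}. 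On it the right-hand side is at least $\big((\beta^{-1}-1-\eta)/(2\alpha)\big)^{M/2}$. With $2\alpha=\beta^{-1}-1-\varepsilon$ and $\eta<\varepsilon$, this grows exponentially in $M$. The same holds for any fixed $\alpha<\frac12(\beta^{-1}-1)$. Hence no sharper lower bound on $\mathcal Z_N$ can rescue the statement.

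Your variance-matched variant does not escape this either. It leaves the factor $\big((\beta^{-1}-1+\varepsilon_N)/(\beta^{-1}-1-\varepsilon_N)\big)^{M/2}\approx\exp(cM^{3/2}/\sqrt N)$. It also needs an $N$-dependent $\alpha$ and an event smaller than $\mathcal E_N(\varepsilon)$, and it stays bounded only for $M=O(N^{1/3})$.

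So your fallback is the right move, and there is a clean tool for it that avoids set-wise domination altogether. On $\mathcal E_N(\varepsilon)$, since $0<\operatorname{sech}^2\le 1$,
\[
-\nabla^2\log\frac{dQ_N}{du}(u)=\frac N\beta I_M-\sum_{i=1}^N\operatorname{sech}^2(u\cdot\xi_i)\,\xi_i\xi_i^\top\ \succeq\ N\big(\beta^{-1}-1-\varepsilon\big)I_M=2\alpha N\,I_M .
\]
Thus $Q_N$ is $2\alpha N$-strongly log-concave; it is also symmetric, hence centered. Brascamp--Lieb then gives $\E_{Q_N}[(u\cdot w)^2]\le\|w\|^2/(2\alpha N)$. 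Bakry--\'Emery together with Herbst gives $Q_N(f-\E_{Q_N}f\ge t)\le e^{-\alpha N t^2}$ for every $1$-Lipschitz $f$. In particular this applies to $u\cdot w/\|w\|$ and to $\|u\|$, whose mean is at most $\sqrt{M/(2\alpha N)}$.

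All these constants depend only on $(\beta,\varepsilon)$. They cover every later use of the lemma: the second moments in the proof of Theorem~\ref{thm:highT}, the second and fourth moments in Corollary~\ref{lem:proj_moments}, and the ball tails $Q_N(B_{N,R}^c)$. They should replace the domination statement when $M\to\infty$.
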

	
	\begin{proof}
		Recall that
		\[
		Q_N(du)
		=
		\frac{1}{\mathcal Z_N}
		\exp\Big(-\frac{N}{2\beta}\|u\|^2\Big)
		\prod_{i=1}^N 2\cosh(u\cdot\xi_i)\,du .
		\]
		Using the bound $\log\cosh x\le x^2/2$, we obtain
		\[
		\prod_{i=1}^N 2\cosh(u\cdot\xi_i)
		\le
		2^N
		\exp\Big(\frac12\sum_{i=1}^N (u\cdot\xi_i)^2\Big).
		\]
		On $\mathcal E_N(\varepsilon)$,
		\[
		\sum_{i=1}^N (u\cdot\xi_i)^2
		=
		N\,u^\top\widehat\Sigma_N u
		\le
		N(1+\varepsilon)\|u\|^2.
		\]
		Therefore,
		\[
		Q_N(du)
		\le
		\frac{2^N}{\mathcal Z_N}
		\exp\Big(
		-\frac{N}{2}\Big(\frac1\beta-(1+\varepsilon)\Big)\|u\|^2
		\Big)\,du .
		\]
		Since $\beta(1+\varepsilon)<1$, the coefficient
		\[
		\alpha := \tfrac12\Big(\frac1\beta-(1+\varepsilon)\Big)>0,
		\]
		and hence the right-hand side is dominated by the density of a centered
		Gaussian measure $\gamma_\alpha=\mathcal N(0,(2\alpha N)^{-1}I_M)$,
		up to a multiplicative constant $C_0<\infty$ coming from normalization.
		
		As a matter of fact, since 
		$\tilde q_N(u)\le \tilde g(u)$  pointwise, where
		\[Q_N(du)=\tilde q_N(u)Z_Q^{-1}du \qquad  \text{as well as  }
		\gamma_\alpha(du)=\tilde g(u)Z_G^{-1}du,\]
		we obtain for all measurable $A\subseteq\R^M$,
		\[
		Q_N(A)
		=
		\int_A \frac{\tilde q_N(u)}{Z_Q}\,du
		\le
		\frac{Z_G}{Z_Q}\int_A \frac{\tilde g(u)}{Z_G}\,du
		=
		C_0\,\gamma_\alpha(A),
		\]
		where $C_0:=Z_G/Z_Q<\infty$.
		Thus $Q_N$ is dominated by $\gamma_\alpha$ up to a multiplicative constant.

		This proves the claim.
	\end{proof}

	\begin{lemma}
		\label{lem:u}
		Fix $\beta<1$ and let $\varepsilon>0$ satisfy $\beta(1+\varepsilon)<1$.
		On the event
		\[
		\mathcal E_N(\varepsilon)
		=
		\{\|\widehat\Sigma_N\|_{\mathrm{op}}\le 1+\varepsilon\},
		\]
		there exists $C=C(\beta,\varepsilon)$ such that
		\[
		\E_{Q_N}\|u\|
		\le
		C\sqrt{\frac{M}{N}} .
		\]
		Moreover, if $M/N\to0$, then
		$\Pp_\xi(\mathcal E_N(\varepsilon))\to1$.
	\end{lemma}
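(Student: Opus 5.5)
The bound on $\E_{Q_N}\|u\|$ will come from a tail estimate, not from a plain comparison of moments: the constant $C_0$ in Lemma~\ref{lem:QN_domination} can grow exponentially in $M$, and this growth has to be beaten by Gaussian concentration at scale $\sqrt{M/N}$. The second assertion is a standard operator-norm estimate for sample covariance matrices.

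\emph{Step 1: make $C_0$ explicit.} In the proof of Lemma~\ref{lem:QN_domination} we have $C_0=Z_G/Z_Q$, where both are unnormalized integrals. Here
\[
Z_G=2^N\int_{\R^M} e^{-\alpha N\|u\|^2}\,du=2^N\Big(\frac{\pi}{\alpha N}\Big)^{M/2}.
\]
For a lower bound on $Z_Q=\mathcal Z_N$, use $\cosh\ge1$:
\[
Z_Q\ge 2^N\int_{\R^M} e^{-\frac{N}{2\beta}\|u\|^2}\,du=2^N\Big(\frac{2\pi\beta}{N}\Big)^{M/2}.
\]
Hence, on $\mathcal E_N(\varepsilon)$,
\[
C_0\le (2\alpha\beta)^{-M/2}=e^{c_1 M},\qquad c_1=\tfrac12\log\frac{1}{1-\beta(1+\varepsilon)}.
\]
This is a constant when $M$ is fixed, but for growing $M$ it is exponentially large, so it cannot be absorbed directly.

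\emph{Step 2: tail bound and integration.} Under $\gamma_\alpha$ we can write $u=(2\alpha N)^{-1/2}g$ with $g\sim\mathcal N(0,I_M)$. Since $\|g\|$ is $1$-Lipschitz in $g$ and $\E\|g\|\le\sqrt M$, Gaussian concentration gives
\[
\gamma_\alpha\big(\|u\|>t\big)\le \exp\Big(-\tfrac12\big(\sqrt{2\alpha N}\,t-\sqrt M\big)^2\Big)\qquad\text{for } t\ge \sqrt{M/(2\alpha N)}.
\]
Combining this with Lemma~\ref{lem:QN_domination} yields $Q_N(\|u\|>t)\le e^{c_1M}\gamma_\alpha(\|u\|>t)$. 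Choose $K=K(\beta,\varepsilon)$ large enough that for $t\ge t_0:=K\sqrt{M/N}$ the exponent satisfies
\[
\tfrac12\big(\sqrt{2\alpha N}\,t-\sqrt M\big)^2\ge c_1M+\tfrac14\,\alpha N t^2 .
\]
Then
\[
\E_{Q_N}\|u\|=\int_0^\infty Q_N(\|u\|>t)\,dt\le t_0+\int_{t_0}^\infty e^{-\alpha N t^2/4}\,dt\le K\sqrt{M/N}+C N^{-1/2},
\]
which is at most $C\sqrt{M/N}$ since $M\ge1$. The choice of $K$ that absorbs the factor $e^{c_1M}$ is the only delicate point, and it works precisely because the Gaussian tail at $t_0$ is of order $e^{-cK^2M}$, i.e.\ exponential in $M$ with a rate that grows in $K$.

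\emph{Step 3: $\Pp_\xi(\mathcal E_N(\varepsilon))\to1$.} The rows $\xi_i$ are i.i.d.\ isotropic vectors ($\E\xi_i\xi_i^\top=I_M$) with independent $\pm1$ coordinates, hence sub-Gaussian with a uniformly bounded constant. The standard covariance estimate for sub-Gaussian rows (e.g.\ Vershynin's bound via an $\tfrac14$-net of the sphere plus Bernstein's inequality for $\sum_i\big((v\cdot\xi_i)^2-1\big)$) gives, with probability at least $1-2e^{-s^2}$,
\[
\|\widehat\Sigma_N-I_M\|_{\op}\le C\Big(\sqrt{\tfrac MN}+\tfrac{s}{\sqrt N}\Big)+C\Big(\tfrac MN+\tfrac{s^2}{N}\Big).
\]
Taking $s=N^{1/4}$ and using $M/N\to0$, the right-hand side tends to $0$, so eventually $\|\widehat\Sigma_N\|_{\op}\le1+\varepsilon$ with probability tending to $1$.
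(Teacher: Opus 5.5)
Your proof is correct, and it departs from the paper's at the one point that matters.

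\emph{The paper's route.} The paper's argument is one line. Under $\gamma_\alpha$ one has $\E\|u\|\asymp\sqrt{M/N}$, and the domination $Q_N\le C_0\gamma_\alpha$ of Lemma~\ref{lem:QN_domination} transfers this bound to $Q_N$. That lemma asserts that $C_0$ depends only on $(\beta,\varepsilon)$.

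\emph{Your route.} You instead compute $C_0=Z_G/Z_Q\le(1-\beta(1+\varepsilon))^{-M/2}=e^{c_1M}$. You then use the domination only on tail events $\{\|u\|>t\}$ with $t\ge K\sqrt{M/N}$. There Gaussian concentration supplies a factor $e^{-cK^2M}$, which absorbs $e^{c_1M}$.

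\emph{Why the difference matters.} The concern about $C_0$ is real. The bound $\log\cosh x\le x^2/2$ also gives $\mathcal Z_N\le 2^N(2\pi/N)^{M/2}\det(\beta^{-1}I_M-\widehat\Sigma_N)^{-1/2}$. Hence $C_0\ge\prod_{j=1}^M\big((\beta^{-1}-\lambda_j)/(2\alpha)\big)^{1/2}$, where $\lambda_j$ are the eigenvalues of $\widehat\Sigma_N$. This is at least $e^{c'M}$ for some $c'=c'(\beta,\varepsilon)>0$ as soon as all $\lambda_j\le 1+\varepsilon/2$, which happens with probability tending to one. So the paper's direct transfer only justifies the bound for bounded $M$. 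Your tail argument proves it in the full regime $M/N\to0$ in which the lemma is stated, at the cost of a few extra lines.

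\emph{Step~3.} This is the same sample-covariance input as Lemma~\ref{lem:covconcentration}, with $s=N^{1/4}$ in place of $u=\log N$.

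\emph{An alternative route.} An even shorter argument is available. On $\mathcal E_N(\varepsilon)$ the Hessian of $-\log\frac{dQ_N}{du}$ is $\frac{N}{\beta}I_M-\sum_{i}\cosh^{-2}(u\cdot\xi_i)\,\xi_i\xi_i^\top\succeq 2\alpha N\,I_M$. So $Q_N$ is symmetric and strongly log-concave, and the Poincar\'e inequality gives $\E_{Q_N}\|u\|^2\le M/(2\alpha N)$. This is the lemma with $C=(2\alpha)^{-1/2}$, with no normalizing constant to control at all.
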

	
	\begin{proof}
		The first claim $\E_{Q_N}\|u\|
		\le
		C\sqrt{\frac{M}{N}}$ is a direct consequence of the Gaussian domination Lemma \ref{lem:QN_domination}. Indeed, under $\gamma_\alpha$, $\|u\|$ has expectation of order $\sqrt{M/N}$,
		and domination transfers this bound to $Q_N$.

		The final claim follows from standard concentration bounds for
		sample covariance matrices when $M/N\to0$ (see Lemma \ref{lem:covconcentration} below).
	\end{proof}
	
	\begin{lemma}
		\label{lem:covconcentration}
		Let $\xi_1,\dots,\xi_N\in\mathbb R^M$ be i.i.d.\ centered subgaussian random vectors
		with covariance $\Sigma:=\E[\xi_1\xi_1^\top]$.
		Let
		\[
		\widehat\Sigma_N := \frac1N\sum_{i=1}^N \xi_i\xi_i^\top .
		\]
		Then there exists an absolute constant $C>0$ (depending only on the subgaussian
		norm of $\xi_1$) such that for every $u\ge 0$,
		\begin{equation}\label{eq:conc}
			\Big\|\widehat\Sigma_N-\Sigma\Big\|_{\mathrm{op}}
			\le
			C\Bigg(\sqrt{\frac{M+u}{N}}+\frac{M+u}{N}\Bigg)\,\|\Sigma\|_{\mathrm{op}}
			\qquad\text{with probability at least }1-2e^{-u}.
		\end{equation}
		In particular, if $\Sigma=I_M$ (the $M$-dimensional identity matrix) and $M/N\to 0$, then for every fixed $\varepsilon>0$,
		\[
		\Pp\Big(\big\|\widehat\Sigma_N-I_M\big\|_{\mathrm{op}}\le \varepsilon\Big)\longrightarrow 1,
		\qquad\text{and hence}\qquad
		\Pp\Big(\|\widehat\Sigma_N\|_{\mathrm{op}}\le 1+\varepsilon\Big)\longrightarrow 1.
		\]
	\end{lemma}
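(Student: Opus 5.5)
This is the standard operator-norm concentration for sample covariance matrices of subgaussian vectors, and I will prove it by an $\varepsilon$-net argument on the unit sphere. First reduce to the isotropic case. Write $\xi_i=\Sigma^{1/2} X_i$ with $X_i$ isotropic, and note that $X_i$ is subgaussian with norm controlled by that of $\xi_1$; in our application $\Sigma=I_M$ anyway, so nothing is lost. Then
\[
\|\widehat\Sigma_N-\Sigma\|_{\op}\le \|\Sigma\|_{\op}\,\Big\|\tfrac1N\textstyle\sum_i X_iX_i^\top-I_M\Big\|_{\op}.
\]
It therefore suffices to show, for $A:=\frac1N\sum_i X_iX_i^\top-I_M$, that $\|A\|_{\op}\le C(\delta\vee\delta^2)$ with probability at least $1-2e^{-u}$, where $\delta:=\sqrt{(M+u)/N}$.

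\textbf{Step 1 (discretization).} Let $\mathcal N$ be a $\tfrac14$-net of the sphere $S^{M-1}$ with $|\mathcal N|\le 9^M$. Since $A$ is symmetric, $\|A\|_{\op}=\sup_{x\in S^{M-1}}|x^\top Ax|$. Approximating each $x$ by a net point gives the standard bound $\|A\|_{\op}\le 2\max_{x\in\mathcal N}|x^\top A x|$.

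\textbf{Step 2 (fixed direction).} For fixed $x\in S^{M-1}$,
\[
x^\top A x=\frac1N\sum_i\big((x\cdot X_i)^2-1\big).
\]
The summands are i.i.d., centered and subexponential, with subexponential norm bounded by $CK^2$, where $K$ is the subgaussian norm. Bernstein's inequality then gives
\[
\P\big(|x^\top Ax|\ge t\big)\le 2\exp\big(-cN\min(t^2/K^4,\,t/K^2)\big).
\]

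\textbf{Step 3 (union bound).} Choose $t=C'K^2\max(\delta,\delta^2)$. Then $N\min(t^2/K^4,t/K^2)\ge C'^2N\delta^2\cdot$const, which is at least const$\cdot(M+u)$ in both regimes $\delta\le1$ and $\delta>1$. With $C'$ large enough, the exponent exceeds $M\log 9+u$. The union bound over the $9^M$ net points then yields failure probability at most $2e^{-u}$, proving \eqref{eq:conc} with $C$ depending only on $K$.

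\textbf{Step 4 (the corollary).} Take $\Sigma=I_M$ and $u=u_N\to\infty$ with $u_N/N\to0$, for instance $u_N=\sqrt N$. Since $M/N\to0$, the right side of \eqref{eq:conc} tends to $0$, so it is eventually below $\varepsilon$, while the probability $1-2e^{-u_N}\to1$. The triangle inequality $\|\widehat\Sigma_N\|_{\op}\le1+\|\widehat\Sigma_N-I_M\|_{\op}$ then gives the second statement.

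For the Rademacher patterns of the model, subgaussianity of $\xi_1$ holds because its coordinates are independent, bounded and centered. Hence $x\cdot\xi_1$ is subgaussian with norm bounded uniformly in $x\in S^{M-1}$ by Hoeffding's lemma.

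The only delicate point is the bookkeeping in Step 3, where the two Bernstein regimes must be matched against $\max(\delta,\delta^2)$. The net argument itself is routine.
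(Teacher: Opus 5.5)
Your argument is correct and is, in substance, the paper's route. The paper simply invokes Vershynin's covariance estimation bound (Thm.~4.7.1/Ex.~4.7.3 in \emph{High-Dimensional Probability}). That bound is proved by exactly your $\tfrac14$-net, Bernstein and union-bound argument. The paper then, as you do, lets $u=u_N\to\infty$ with $u_N/N\to0$. Your $u_N=\sqrt N$ is fine; the paper's display for $u_N=\log N$ should read $M+\log N$ rather than $M+1$, a harmless slip.

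The one step that is wrong as written is the reduction to the isotropic case for general $\Sigma$. There $\Sigma^{-1/2}\xi_1$ need not have subgaussian norm controlled by $\|\xi_1\|_{\psi_2}$; it degenerates as $\lambda_{\min}(\Sigma)\to0$. What makes the reduction work is Vershynin's standing hypothesis $\|\langle\xi_1,x\rangle\|_{\psi_2}\le K\|\langle\xi_1,x\rangle\|_{L^2}$ for all $x$, which gives $\|\Sigma^{-1/2}\xi_1\|_{\psi_2}\le K$.

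Without that hypothesis, the multiplicative bound with $\|\Sigma\|_{\op}$ and a constant depending only on $\|\xi_1\|_{\psi_2}$ is actually false. Take $M=1$ and $\xi_1\in\{0,\pm1\}$ with $\P(\xi_1\neq0)=p$ and $Np\to0$. Then $\widehat\Sigma_N=0$ with probability tending to one, so $|\widehat\Sigma_N-\Sigma|=p$. This exceeds the right-hand side of \eqref{eq:conc} for fixed $u$ (say $u=1$) and large $N$.

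This imprecision is inherited from the statement itself. It is irrelevant for $\Sigma=I_M$, the only case used in the paper and in your Step~4, and there your proof is complete.

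A minor bookkeeping remark on Step~3. Put $m=\max(\delta,\delta^2)$ and take $C'\ge1$. Then $\min(C'^2m^2,C'm)\ge C'\min(m^2,m)=C'\delta^2$, so the exponent is bounded below by a quantity linear (not quadratic) in $C'$. Linear growth in $C'$ is all the union bound requires.
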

	
	\begin{proof}
		The high-probability bound is exactly Exercise~4.7.3 in Vershynin
		\cite[Ex.~4.7.3]{VershyninHDP}, which follows from the covariance estimation theorem
		\cite[Thm.~4.7.1]{VershyninHDP}. (Vershynin's notation uses $m$ samples in $\mathbb R^n$;
		here $m=N$ and $n=M$.)
		
		Now assume $\Sigma=I_M$. Fix $\varepsilon>0$ and take $u_N:=\log N$ in \eqref{eq:conc}.
		Then, with probability at least $1-2e^{-u_N}=1-2/N$ we obtain,
		\[
		\big\|\widehat\Sigma_N-I_M\big\|_{\mathrm{op}}
		\le
		C\Bigg(\sqrt{\frac{M+1}{N}}+\frac{M+1}{N}\Bigg).
		\]
		Since $M/N\to 0$, the right-hand side converges to $0$, so
		$\|\widehat\Sigma_N-I_M\|_{\mathrm{op}}\to 0$ in probability.
		The final claim follows because
		$$\|\widehat\Sigma_N\|_{\mathrm{op}}\le \|I_M\|_{\mathrm{op}}+\|\widehat\Sigma_N-I_M\|_{\mathrm{op}}
		=1+\|\widehat\Sigma_N-I_M\|_{\mathrm{op}}.$$
	\end{proof}
	
	Together with Lemma~\ref{lem:TV}, this shows that in the high-temperature regime
	the total variation distance of the $k$-marginal is controlled by
	$
	\E_{Q_N}\|u\|\cdot \sqrt{k},
	$
	which will yield increasing propagation of chaos under the condition
	$kM/N\to0$.

	\section{Proof of Theorem~\ref{thm:highT}}
	
	\begin{proof}[Proof of Theorem~\ref{thm:highT}]
		Fix $\beta<1$ and as above choose $\varepsilon>0$ such that $\beta(1+\varepsilon)<1$ and let
		$$\mathcal E_N(\varepsilon):=\{\|\widehat\Sigma_N\|_{\op}\le 1+\varepsilon\}.$$
		By Lemma~\ref{lem:covconcentration}, since $M/N\to0$ we have
		\begin{equation}\label{eq:EN_high_prob}
			\P_\xi(\mathcal E_N(\varepsilon))\longrightarrow 1 .
		\end{equation}
		
		On $\mathcal E_N(\varepsilon)$, Lemma~\ref{lem:QN_domination} yields constants
		$C_0<\infty$ and $\alpha>0$ (depending only on $\beta,\varepsilon$) such that
		$Q_N\le C_0\gamma_\alpha$, where $\gamma_\alpha=\mathcal N(0,(2\alpha N)^{-1}I_M)$.
		In particular, still on $\mathcal E_N(\varepsilon)$ for each $i\le k$,
		\begin{equation}\label{eq:dir_var_bound}
			\E_{u\sim Q_N}\big[(u\cdot\xi_i)^2\big]
			\le
			C_0\,\E_{G\sim\gamma_\alpha}\big[(G\cdot\xi_i)^2\big]
			=
			C_0\,\frac{\|\xi_i\|^2}{2\alpha N}
			=
			C\,\frac{M}{N},
		\end{equation}
		since $\|\xi_i\|^2=\sum_{\nu=1}^M(\xi_i^\nu)^2=M$.
		
		By Lemma~\ref{lem:TV} and Jensen's inequality (again on $\mathcal E_N(\varepsilon)$),
		\begin{align}
			\TV(\mu_N^{(k)},\pi^{\otimes k})
			&\le
			\E_{u\sim Q_N}\Big[\sqrt{\sum_{i=1}^k (u\cdot\xi_i)^2}\Big]\\
			&\le
			\sqrt{\sum_{i=1}^k \E_{u\sim Q_N}(u\cdot\xi_i)^2}
			\notag\\
			&\le
			\sqrt{k\cdot C\frac{M}{N}}
			=
			C\,\sqrt{\frac{kM}{N}}
			\label{eq:TV_bound_final}
		\end{align}
		where we used \eqref{eq:dir_var_bound} in the last inequality.
		Since $kM/N\to0$, the right-hand side of \eqref{eq:TV_bound_final} tends to $0$.
		Together with \eqref{eq:EN_high_prob}, this implies
		$\TV(\mu_N^{(k)},\pi^{\otimes k})\to0$ in $\P_\xi$-probability.
	\end{proof}
	
	\begin{remark}
		\normalfont Under the high-temperature mixing measure $Q_N$, the Gaussian domination
		$Q_N\le C_0\mathcal N(0,(2\alpha N)^{-1}I_M)$ implies that typical $u$ has
		coordinates of size $N^{-1/2}$ and hence $\|u\|^2$ is of order $M/N$.
		For a fixed pattern $\xi_i\in\{-1,+1\}^M$, the local field
		$h_i=u\cdot\xi_i$ therefore has variance of order $M/N$, so $h_i$ is typically
		of size $\sqrt{M/N}$.
		The $k$-spin marginal under the mixture is close to product if the collection
		of means $\tanh(h_i)$ is small in $\ell^2$, and Lemma~\ref{lem:TV} bounds this
		distance by $\sqrt{\sum_{i=1}^k h_i^2}$.
		Since each $h_i^2$ has mean of order $M/N$, the natural size of the sum is
		$kM/N$, and propagation of chaos follows if $kM/N\to0$. This explains our condition $kM/N \to 0$.
	\end{remark}

	\section{When propagation of chaos stops for  $\beta<1$ --- Proof of Theorem \ref{thm:stop_macro_Msmall} }
	
	Let us next prove Theorem \ref{thm:stop_macro_Msmall}:
	
	\begin{proof}
		For the rest of the proof let us write $$\tau^2:=\frac{\beta}{1-\beta} \qquad \text {and set} \quad  \lambda_N:=\tau^2\frac{k}{N},$$ so that
		$\lambda_N\to \lambda:=\tau^2\rho>0$.
		For a given $\varepsilon>0$ let us introduce a disorder event similar to $\mathcal E_N$ in the previous sections. More precisely, define $\mathcal G_N$ to be the disorder event on which
		\[
		\mathcal{G}_N:=\left\{\xi=(\xi_i^\mu)_{i,\mu}:
		\Big\|\widehat\Sigma_N-I_M\Big\|_{\op}\le\varepsilon
		\text{  as well as }
		\Big\|\widehat\Sigma_k-I_M\Big\|_{\op}\le\varepsilon
		\right\}
		\]
		where $\widehat\Sigma_N=\frac1N\sum_{i=1}^N\xi_i\xi_i^\top$ and
		$\widehat\Sigma_k=\frac1k\sum_{i=1}^k\xi_i\xi_i^\top$.
		
		Since $M=o(N)$ and $k\sim\rho N$ (by assumption) also $M=o(k)$. Hence, Lemma~\ref{lem:covconcentration}
		applied with $N$ and with $k$ implies $$\P_\xi(\mathcal G_N)\to 1.$$
		
		\medskip
		Define the shorthand notation $\sigma_{i:j}:=(\sigma_i, \ldots, \sigma_j)$, (where we assume that $i<j$) and let $L_N$ be the density of 
		$\mu_N^{(k)}$ with respect to $\pi^{\otimes k}$
		(in statistics this is called the likelihood ratio): 
		\[
		L_N(\sigma_{1:k})
		:=
		\frac{d\mu_N^{(k)}}{d\pi^{\otimes k}}(\sigma_{1:k})
		=
		2^k\,\mu_N^{(k)}(\sigma_{1:k}).
		\]
		
		\begin{lemma}
			\label{lem:likelihood_k}
			For every $\sigma_{1:k}\in\{-1,+1\}^k$, one has the identity
			\begin{equation}\label{eq:LN_exact_lemma}
				L_N(\sigma_{1:k})
				=
				\frac{\displaystyle \int_{\mathbb R^M}
					\exp\!\Big(-\frac{N}{2\beta}\|u\|^2
					+\sum_{i=1}^k \sigma_i\,u\!\cdot\!\xi_i\Big)
					\prod_{i=k+1}^N \cosh(u\!\cdot\!\xi_i)\,du}
				{\displaystyle \int_{\mathbb R^M}
					\exp\!\Big(-\frac{N}{2\beta}\|u\|^2\Big)
					\prod_{i=1}^N \cosh(u\!\cdot\!\xi_i)\,du}.
			\end{equation}
			Equivalently, if $Q_N$ denotes the Hubbard--Stratonovich mixing measure
			from Lemma~\ref{lem:HS}, then
			\begin{equation}\label{eq:LN_expect_lemma}
				L_N(\sigma_{1:k})
				=
				\E_{Q_N}\Big[
				\exp\Big(\sum_{i=1}^k \big(\sigma_i\,u\!\cdot\!\xi_i
				- 
				\log\cosh(u\!\cdot\!\xi_i)\big)\Big)
				\Big].
			\end{equation}
		\end{lemma}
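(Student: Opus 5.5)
The identity follows from the Corollary to Lemma~\ref{lem:HS} together with the fact that $\pi^{\otimes k}(\sigma_{1:k}) = 2^{-k}$. We first establish \eqref{eq:LN_expect_lemma} and then obtain \eqref{eq:LN_exact_lemma} by unfolding the density of $Q_N$.

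\emph{Step 1.} By the Corollary,
\[
\mu_N^{(k)}(\sigma_{1:k})=\int Q_N(du)\prod_{i=1}^k \frac{e^{\sigma_i u\cdot\xi_i}}{2\cosh(u\cdot\xi_i)} .
\]
Multiplying by $2^k$ cancels the factors $2$ in the denominators. Writing $1/\cosh(h)=\exp(-\log\cosh h)$ then gives
\[
L_N(\sigma_{1:k})=\E_{Q_N}\Big[\exp\Big(\sum_{i=1}^k\big(\sigma_i u\cdot\xi_i-\log\cosh(u\cdot\xi_i)\big)\Big)\Big],
\]
which is \eqref{eq:LN_expect_lemma}. The integrand is positive and bounded by $e^{\sum_{i\le k}|u\cdot\xi_i|}$, so the expectation is finite. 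This follows because the Gaussian factor in $Q_N$ dominates any exponential of a linear function of $u$.

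\emph{Step 2.} Insert the explicit density of $Q_N$ from Lemma~\ref{lem:HS}:
\[
Q_N(du)=\mathcal Z_N^{-1} e^{-\frac{N}{2\beta}\|u\|^2}\prod_{i=1}^N 2\cosh(u\cdot\xi_i)\,du .
\]
For $i\le k$, the factor $2\cosh(u\cdot\xi_i)$ in the density cancels against $(2\cosh(u\cdot\xi_i))^{-1}$ in the integrand, leaving $2e^{\sigma_i u\cdot\xi_i}$. For $i>k$, the factors $2\cosh(u\cdot\xi_i)$ remain.

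The numerator is therefore
\[
2^N\int e^{-\frac{N}{2\beta}\|u\|^2+\sum_{i\le k}\sigma_i u\cdot\xi_i}\prod_{i>k}\cosh(u\cdot\xi_i)\,du,
\]
and the denominator is
\[
\mathcal Z_N=2^N\int e^{-\frac{N}{2\beta}\|v\|^2}\prod_{i=1}^N\cosh(v\cdot\xi_i)\,dv .
\]
The factors $2^N$ cancel, which yields \eqref{eq:LN_exact_lemma}.

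Both integrals are finite: $\cosh(x)\le e^{|x|}$ and $|u\cdot\xi_i|\le\sqrt M\|u\|$, so every integrand is dominated by a Gaussian times $e^{C\|u\|}$. This justifies every use of Fubini and every cancellation above.

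There is no genuine obstacle here. The only points needing care are the bookkeeping of the powers of $2$ and the integrability, which makes the manipulations legitimate.
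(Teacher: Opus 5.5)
Your proof is correct and is essentially the paper's argument run in reverse: the paper sums out $\sigma_{k+1:N}$ directly in the Hubbard--Stratonovich integral to obtain \eqref{eq:LN_exact_lemma} first and then rewrites it as a $Q_N$-expectation. You instead start from the Corollary to Lemma~\ref{lem:HS} to get \eqref{eq:LN_expect_lemma} and then unfold the density of $Q_N$, with the same cancellation of powers of $2$.
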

		
		\begin{proof}
			Fix $\sigma_{1:k}\in\{-1,+1\}^k$. By definition of the marginal,
			\[
			\mu_N^{(k)}(\sigma_{1:k})
			=
			\sum_{\sigma_{k+1:N}\in\{-1,+1\}^{N-k}}
			\mu_N(\sigma_{1:k},\sigma_{k+1:N})
			=
			\frac{1}{Z_N}
			\sum_{\sigma_{k+1:N}}
			\exp\Big(\frac{\beta N}{2}\|m_N(\sigma)\|^2\Big).
			\]
			Let us the apply Hubbard--Stratonovich identity (as in Lemma~\ref{lem:HS}) to the
			factor $\exp(\frac{\beta N}{2}\|m_N(\sigma)\|^2)$, obtaining
			\[
			\exp\Big(\frac{\beta N}{2}\|m_N(\sigma)\|^2\Big)
			=
			\Big(\frac{N}{2\pi\beta}\Big)^{M/2}
			\int_{\mathbb R^M}
			\exp\Big(-\frac{N}{2\beta}\|u\|^2 + \sum_{i=1}^N \sigma_i\,u\cdot\xi_i\Big)\,du .
			\]
			We insert this expression into the sum defining $\mu_N^{(k)}(\sigma_{1:k})$ and
			use Fubini to arrive at:
			\[
			\mu_N^{(k)}(\sigma_{1:k})
			=
			\frac{1}{Z_N}
			\Big(\frac{N}{2\pi\beta}\Big)^{M/2}
			\int_{\mathbb R^M}
			\exp\Big(-\frac{N}{2\beta}\|u\|^2 + \sum_{i=1}^k \sigma_i\,u\cdot\xi_i\Big)
			\sum_{\sigma_{k+1:N}}
			\exp\Big(\sum_{i=k+1}^N \sigma_i\,u\cdot\xi_i\Big)\,du.
			\]
			The remaining sum on the right hand side of the above equation factorizes:
			\[
			\sum_{\sigma_{k+1:N}}
			\exp\Big(\sum_{i=k+1}^N \sigma_i\,u\cdot\xi_i\Big)
			=
			\prod_{i=k+1}^N
			\sum_{\sigma_i=\pm1} e^{\sigma_i(u\cdot\xi_i)}
			=
			\prod_{i=k+1}^N 2\cosh(u\cdot\xi_i).
			\]
			Thus
			\[
			\mu_N^{(k)}(\sigma_{1:k})
			=
			\frac{1}{Z_N}
			\Big(\frac{N}{2\pi\beta}\Big)^{M/2}
			\int_{\mathbb R^M}
			\exp\Big(-\frac{N}{2\beta}\|u\|^2\Big)
			\exp\Big(\sum_{i=1}^k \sigma_i\,u\cdot\xi_i\Big)
			\prod_{i=k+1}^N 2\cosh(u\cdot\xi_i)\,du.
			\]
			On the other hand, applying the same Hubbard--Stratonovich computation to
			the full partition function gives as in Lemma \ref{lem:HS}
			\[
			Z_N
			=
			\Big(\frac{N}{2\pi\beta}\Big)^{M/2}
			\int_{\mathbb R^M}
			\exp\Big(-\frac{N}{2\beta}\|u\|^2\Big)
			\prod_{i=1}^N 2\cosh(u\cdot\xi_i)\,du.
			\]
			Dividing the two displays, canceling the common prefactor
			$(\frac{N}{2\pi\beta})^{M/2}$, and multiplying numerator and denominator by $2^{-k}$
			yields \eqref{eq:LN_exact_lemma}. Finally, writing
			\[
			\exp\Big(\sum_{i=1}^k \sigma_i\,u\cdot\xi_i\Big)
			=
			\exp\Big(\sum_{i=1}^k \big(\sigma_i\,u\cdot\xi_i - 
			\log\cosh(u\cdot\xi_i)\big)\Big)
			\prod_{i=1}^k \cosh(u\cdot\xi_i),
			\]
			and recognizing the resulting density as $Q_N(du)$ (up to normalization) gives
			\eqref{eq:LN_expect_lemma}.
		\end{proof}
		
		Hence,
		$
		L_N(\sigma_{1:k})
		=
		\E_{Q_N}\Big[
		\exp\Big(\sum_{i=1}^k \big(\sigma_i\,u\cdot\xi_i-\log\cosh(u\cdot\xi_i)\big)\Big)
		\Big].
		$
		Define $$S_{N,k}:=\frac{1}{\sqrt k}\sum_{i=1}^k\sigma_i\xi_i\in\R^M,$$ so that
		$$\sum_{i=1}^k \sigma_i\,u\cdot\xi_i=\sqrt k\,u\cdot S_{N,k}.$$
		
		\medskip
		
		For the next step we need the following corollary of our Gaussian domination result for $Q_N$
		
		\begin{corollary}
			\label{lem:proj_moments}
			Recall that by Lemma \ref{lem:QN_domination} on $\mathcal E_N(\varepsilon)$ there are $C_0>0$ and
			$\alpha>0$ such that we have the domination
			$$Q_N\le C_0\gamma_\alpha.$$ 
			Here $\gamma_\alpha=\mathcal N(0,(2\alpha N)^{-1}I_M)$.
			Then for every deterministic vector $w\in\R^M$,
			\[
			\E_{u\sim Q_N}\big[(u\cdot w)^2\big]
			\le
			C\,\frac{\|w\|^2}{N} \qquad \text{as well as }\quad
			\E_{u\sim Q_N}\big[(u\cdot w)^4\big]
			\le
			C\,\frac{\|w\|^4}{N^2},
			\]
			where $C<\infty$ depends only on $(C_0,\alpha)$.
			In particular, since $\|\xi_i\|^2=M$,
			\[
			\E_{u\sim Q_N}\big[(u\cdot\xi_i)^2\big]\le C\frac{M}{N} \qquad \text{as well as }\quad
			\E_{u\sim Q_N}\big[(u\cdot\xi_i)^4\big]\le C\frac{M^2}{N^2}.
			\]
		\end{corollary}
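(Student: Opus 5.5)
The plan is to reduce both moment bounds to Gaussian computations via the domination $Q_N\le C_0\gamma_\alpha$ of Lemma~\ref{lem:QN_domination}, valid on $\mathcal E_N(\varepsilon)$. The functions $u\mapsto (u\cdot w)^2$ and $u\mapsto (u\cdot w)^4$ are nonnegative and measurable. So the setwise inequality $Q_N(A)\le C_0\gamma_\alpha(A)$ transfers to integrals of nonnegative functions. One can see this by approximating with simple functions, or directly via the layer-cake formula $\E_{Q_N}[f]=\int_0^\infty Q_N(f>t)\,dt\le C_0\int_0^\infty\gamma_\alpha(f>t)\,dt=C_0\E_{\gamma_\alpha}[f]$. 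This gives
\[
\E_{u\sim Q_N}\big[(u\cdot w)^p\big]\le C_0\,\E_{G\sim\gamma_\alpha}\big[(G\cdot w)^p\big],\qquad p\in\{2,4\}.
\]

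Next, under $\gamma_\alpha=\mathcal N(0,(2\alpha N)^{-1}I_M)$ the scalar $G\cdot w$ is a centered one-dimensional Gaussian with variance $s^2:=\|w\|^2/(2\alpha N)$. Its second and fourth moments are $s^2$ and $3s^4$. Hence
\[
\E_{Q_N}[(u\cdot w)^2]\le \frac{C_0}{2\alpha}\frac{\|w\|^2}{N},\qquad
\E_{Q_N}[(u\cdot w)^4]\le \frac{3C_0}{4\alpha^2}\frac{\|w\|^4}{N^2},
\]
so $C:=\max\{C_0/(2\alpha),\,3C_0/(4\alpha^2)\}$ works and depends only on $(C_0,\alpha)$. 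The statement for $w=\xi_i$ then follows by plugging in $\|\xi_i\|^2=\sum_\nu(\xi_i^\nu)^2=M$, since the entries are $\pm1$. Here $\xi_i$ is a fixed (quenched) vector, so it counts as deterministic relative to $Q_N$.

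The only point needing care is the first step, passing from domination of measures of sets to domination of expectations. This is routine because the integrands are nonnegative. I also note that $C_0$ in Lemma~\ref{lem:QN_domination} is the ratio $Z_G/Z_Q$. For the constant to be uniform, one should use that lemma as stated, namely that $C_0$ depends only on $\beta,\varepsilon$ on the event $\mathcal E_N(\varepsilon)$. I would take that as given rather than re-derive it.
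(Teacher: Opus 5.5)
Your proof is correct. Transferring $Q_N\le C_0\gamma_\alpha$ to the nonnegative integrands $(u\cdot w)^2$ and $(u\cdot w)^4$ via the layer-cake formula, and then using the one-dimensional Gaussian moments $s^2$ and $3s^4$, gives exactly the stated inequalities. It does so non-asymptotically and on $\mathcal E_N(\varepsilon)$ alone. This is also the reasoning the paper itself uses for the second moment in \eqref{eq:dir_var_bound}.

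The paper's proof of this corollary takes a different route. It writes $dQ_N/dG_N=e^{\Delta_N}/\E_{G_N}[e^{\Delta_N}]$ with $G_N=\mathcal N(0,\tau^2N^{-1}I_M)$ and $\Delta_N(u)=\sum_{i\le N}R(u\cdot\xi_i)$, and bounds $\E_{G_N}|\Delta_N|\le CM^2/N$. Working on $\mathcal G_N$ with $M=o(\sqrt N)$, it concludes $\E_{Q_N}[f]=\E_{G_N}[f](1+o(1))$ for the exponential test function $f(u)=\exp(\sqrt k\,u\cdot s_N-\frac k2\|u\|^2)$. What that buys is an asymptotic identification of $Q_N$ with the Gaussian of the exact variance $\tau^2/N$, rather than the cruder $(2\alpha N)^{-1}$ inflated by $C_0$. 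But it holds only for that one observable, only as $N\to\infty$, and under hypotheses absent from the corollary. It is essentially the proof of Lemma~\ref{lem:QN_gauss_Mgrowing} and never actually bounds the second and fourth moments of $u\cdot w$. Your argument is the one that establishes the statement as written.

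Your caution about $C_0$ is well founded, and it concerns Lemma~\ref{lem:QN_domination} rather than your argument. From $\log\cosh x\le x^2/2$ one gets $Z_Q\le 2^N(2\pi/N)^{M/2}\det(\beta^{-1}I_M-\widehat\Sigma_N)^{-1/2}$, while $Z_G=2^N(\pi/(\alpha N))^{M/2}$. Hence
\[
C_0=\frac{Z_G}{Z_Q}\ \ge\ \Big(\frac{\beta^{-1}-\|\widehat\Sigma_N\|_{\op}}{\beta^{-1}-1-\varepsilon}\Big)^{M/2},
\]
which grows exponentially in $M$ once $\|\widehat\Sigma_N\|_{\op}\to1$. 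So your constant $C$ is uniform only for bounded $M$, not in the regime $M\to\infty$ where the corollary is later used.

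To get bounds uniform in $M$, replace setwise domination by a convex-order comparison. On $\mathcal E_N(\varepsilon)$, $(\log\cosh)''\le1$ shows that $dQ_N/d\gamma_\alpha$ is log-concave, and $Q_N$ is symmetric. Harg\'e's inequality then gives $\E_{Q_N}[g]\le\E_{\gamma_\alpha}[g]$ for every convex $g$. Applied to $(u\cdot w)^2$ and $(u\cdot w)^4$, this yields your bounds with $C_0$ replaced by $1$. Equivalently, $Q_N$ is $2\alpha N$-strongly log-concave, and Brascamp--Lieb together with Bakry--\'Emery give the same conclusion up to absolute constants.
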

		
		\begin{proof}
			From the above arguments we deduce that, 
			on $\mathcal G_N$, the density of $Q_N$ can be written as
			\[
			\frac{dQ_N}{du}(u)
			=
			\frac{1}{Z_N}
			\exp\!\Big(-\frac{N}{2\tau^2}\|u\|^2\Big)
			\exp\!\big(\Delta_N(u)\big),
			\]
			where
			\[
			\Delta_N(u):=\sum_{i=1}^N R(u\cdot\xi_i),
			\qquad |R(t)|\le C|t|^4.
			\]
			Equivalently,
			\begin{equation}\label{eq:RN_QN_GN}
				\frac{dQ_N}{dG_N}(u)
				=
				\frac{\exp(\Delta_N(u))}{\E_{G_N}[\exp(\Delta_N(U))]},
			\end{equation}
			where $G_N=\mathcal N(0,\tau^2 N^{-1} I_M)$.
			
			\medskip
			We first control $\Delta_N$ under the Gaussian measure $G_N$.
			For $U\sim G_N$, $U\cdot\xi_i$ is centered Gaussian with variance
			$\tau^2\|\xi_i\|^2/N=\tau^2 M/N$, hence
			$
			\E_{G_N}[(U\cdot\xi_i)^4]
			=
			3\Big(\frac{\tau^2 M}{N}\Big)^2.$
			
			Using $|R(t)|\le C|t|^4$, we obtain
			\[
			\E_{G_N}[|\Delta_N(U)|]
			\le
			C\sum_{i=1}^N \E_{G_N}[(U\cdot\xi_i)^4]
			\le
			C\,\frac{M^2}{N}.
			\]
			Since $M=o(\sqrt N)$, it follows by Markov's inequality that
			\[
			\Delta_N(U)\xrightarrow[N\to\infty]{}0
			\quad\text{in $G_N$-probability on $\mathcal G_N$.}
			\]
			
			\medskip
			Fix $R>0$ and define
			\[
			B_{N,R}:=\Big\{\|u\|\le R\sqrt{\frac{M}{N}}\Big\}.
			\]
			By Gaussian concentration,
			\(
			G_N(B_{N,R}^c)\to0
			\)
			as $R\to\infty$, uniformly in $N$.
			On $B_{N,R}$, we have $\Delta_N(u)=o(1)$ uniformly in $u$ on $\mathcal G_N$,
			hence $\exp(\Delta_N(u))=1+o(1)$ uniformly on $B_{N,R}$.
			
			\medskip
			Now let
			$
			f(u):=\exp\!\Big(\sqrt{k}\,u\cdot s_N-\frac{k}{2}\|u\|^2\Big),
			$
			with $\|s_N\|=O(\sqrt M)$.
			Using \eqref{eq:RN_QN_GN}, we write
			\[
			\E_{Q_N}[f]
			=
			\frac{\E_{G_N}[f(U)\exp(\Delta_N(U))]}
			{\E_{G_N}[\exp(\Delta_N(U))]}.
			\]
			The contributions from $B_{N,R}^c$ vanish uniformly by Gaussian tails,
			while on $B_{N,R}$ we may replace $\exp(\Delta_N(U))$ by $1+o(1)$.
			Since $\E_{G_N}[\exp(\Delta_N(U))]\to1$, this yields
			\[
			\E_{Q_N}[f]
			=
			\E_{G_N}[f]\,(1+o(1))
			\]
			in $\P_\xi$-probability.
			This proves the claim.
		\end{proof}

		Next, we use Taylor expansion of $\log \cosh$ to 
		second order: 
		$\log\cosh t=\frac{t^2}{2}+R(t)$ with $|R(t)|\le C|t|^4$.
		On $\mathcal G_N$,
		\[
		\sum_{i=1}^k (u\cdot\xi_i)^2
		=
		k\,u^\top\widehat\Sigma_k u
		=
		k\|u\|^2+O(\varepsilon)\,k\|u\|^2.
		\]
		Moreover,
		$
		\sum_{i=1}^k |R(u\cdot\xi_i)|
		\le
		C\sum_{i=1}^k (u\cdot\xi_i)^4.
		$
		
		Consequently, on $\mathcal G_N$,
		\begin{equation}\label{eq:LN_quad_Mgrowing_withR}	
			L_N(\sigma_{1:k})
			=
			\E_{Q_N}\Big[\exp(\sqrt{k}\,u\cdot S_{N,k}-\tfrac12\sum_{i=1}^k (u\cdot\xi_i)^2)\,
			\exp(-\sum_{i=1}^k R(u\cdot\xi_i))\Big].
		\end{equation}
		
		We can now remove the Taylor remainder.
		
		\begin{lemma}\label{lem:remove_R}
			Assume $k/N\to\rho\in(0,1)$ and $M=o(\sqrt N)$. On $\mathcal G_N$,
			\[
			\E_{Q_N}\Big[\Big|\exp\!\Big(-\sum_{i=1}^k R(u\cdot\xi_i)\Big)-1\Big|\Big]
			\longrightarrow 0
			\qquad\text{in }\mathbb P_\xi\text{-probability}.
			\]
		\end{lemma}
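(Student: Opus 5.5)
We use the elementary inequality $|e^{x}-1|\le |x|e^{|x|}$ together with the Gaussian domination of $Q_N$ from Lemma~\ref{lem:QN_domination}. Work on $\mathcal G_N$, which is contained in $\mathcal E_N(\varepsilon)$ once $\varepsilon$ is chosen with $\beta(1+\varepsilon)<1$; recall that $\P_\xi(\mathcal G_N)\to1$. Write
\[
X(u):=\sum_{i=1}^k R(u\cdot\xi_i).
\]
Then $|X(u)|\le C\sum_{i=1}^k (u\cdot\xi_i)^4$. We must show $\E_{Q_N}\big[|e^{-X}-1|\big]\to0$. By Cauchy--Schwarz,
\[
\E_{Q_N}|e^{-X}-1|\le \E_{Q_N}\big[|X|e^{|X|}\big]\le \big(\E_{Q_N}X^2\big)^{1/2}\big(\E_{Q_N}e^{2|X|}\big)^{1/2}.
\]

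The first factor is the easy part. We have
\[
\E_{Q_N}X^2\le C k\sum_{i=1}^k\E_{Q_N}(u\cdot\xi_i)^8 .
\]
The domination $Q_N\le C_0\gamma_\alpha$ gives, exactly as in Corollary~\ref{lem:proj_moments} but with eighth moments, $\E_{Q_N}(u\cdot\xi_i)^8\le C M^4/N^4$. Since $k\le N$, this yields $\E_{Q_N}X^2\le C M^4/N^2\to0$ because $M=o(\sqrt N)$. The first factor therefore tends to zero at rate $M^2/N$.

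The second factor is the main obstacle, since $e^{2|X|}$ grows like $e^{c\sum (u\cdot\xi_i)^4}$ and is not integrable against a Gaussian. We avoid it by truncating rather than estimating the exponential moment directly. We use the sharper remainder bound for $\log\cosh$: $|R(t)|\le \min(Ct^4,\,t^2/2+\log 2)$ for all $t$, and in fact $|R(t)|\le t^2/2$, since $0\le \log\cosh t\le t^2/2$ gives $R(t)=\log\cosh t-t^2/2\in[-t^2/2,0]$.

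This sign information is what makes the argument close. Because $R\le 0$, we have $-X\ge0$, so $e^{-X}\ge1$ and $e^{-X}\le e^{\frac12\sum_{i\le k}(u\cdot\xi_i)^2}$. On $\mathcal G_N$,
\[
\sum_{i\le k}(u\cdot\xi_i)^2\le k(1+\varepsilon)\|u\|^2\le N(1+\varepsilon)\|u\|^2 .
\]
Against $Q_N\le C_0\gamma_\alpha$, which has density proportional to $e^{-\alpha N\|u\|^2}$, the quantity $\E_{Q_N}e^{2|X|}$ is finite and bounded uniformly in $N$ provided $(1+\varepsilon)<\alpha$. That condition need not hold, so I would instead split on the ball $B=\{\|u\|\le \delta\}$ for a small fixed $\delta$.

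On $B$, $|X|\le C\delta^2 k\|u\|^2(1+\varepsilon)$. With $\delta$ small enough that $2C\delta^2(1+\varepsilon)<\alpha/2$, the domination gives $\E_{Q_N}[e^{2|X|}\mathbf 1_B]\le C$ uniformly in $N$. Here we used $(u\cdot\xi_i)^4\le (u\cdot\xi_i)^2\|u\|^2M$; the stray factor $M$ is harmless because $\|u\|\sqrt M\le \delta'$ on the refined ball $B'=\{\|u\|\le\delta'/\sqrt M\}$, so we take $B'$ in place of $B$. Indeed $\gamma_\alpha(B'^c)$ is tiny, since typical $\|u\|\sim\sqrt{M/N}\ll 1/\sqrt M$ when $M=o(\sqrt N)$. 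Thus the Cauchy--Schwarz bound above, restricted to $B'$, tends to zero.

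On $B'^c$ we bound directly, without Cauchy--Schwarz on the exponential. Using $|e^{-X}-1|\le e^{\frac12 N(1+\varepsilon)\|u\|^2}$ we get
\[
\E_{Q_N}\big[|e^{-X}-1|\mathbf 1_{B'^c}\big]\le \frac{1}{\mathcal Z_N}\int_{B'^c}e^{-\frac N{2\beta}\|u\|^2}\prod_{i\le N}2\cosh(u\cdot\xi_i)\,e^{\frac12\sum_{i\le k}(u\cdot\xi_i)^2}\,du .
\]
This integrand is bounded by the Gaussian with exponent $-\frac N2\big(\frac1\beta-(1+\varepsilon)(1+\rho)\big)\|u\|^2$. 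When that coefficient is negative, I would instead use the exact representation \eqref{eq:LN_exact_lemma}: the factor $e^{-X}\prod_{i\le k}\cosh(u\cdot\xi_i)=e^{\frac12\sum_{i\le k}(u\cdot\xi_i)^2}$ is then replaced by the honest $\prod_{i\le k}\cosh$, and the large-deviation tail of $Q_N$ outside a ball of radius $\delta'/\sqrt M\gg\sqrt{M/N}$ is $e^{-cN\delta'^2/M}\to0$.

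I expect this tail control on $B'^c$, keeping the exponential weight integrable, to be the delicate step. The fallback is to state the lemma with the restriction to $B'$ and absorb $B'^c$ into an $o(1)$ error in $L_N$ directly.
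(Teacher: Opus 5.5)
Your proposal does not yet prove the lemma. There are two gaps, and both stem from one missed observation.

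\emph{Inner region.} The bound $\E_{Q_N}[e^{2|X|}\mathbf 1_{B'}]\le C$ uniformly in $N$ does not follow from what you use once $M\to\infty$. That is exactly the regime of Theorem~\ref{thm:stop_macro_Msmall}, where the lemma is applied. Your only input on $B'$ is $2|X|\le\theta\alpha N\|u\|^2$ with a fixed $\theta\in(0,1)$. But $\E_{\gamma_\alpha}[e^{\theta\alpha N\|u\|^2}]=(1-\theta)^{-M/2}$ grows exponentially in $M$, and this swamps the gain $M^2/N$ from $(\E_{Q_N}X^2)^{1/2}$ as soon as $M$ grows faster than logarithmically. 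Shrinking $\delta'$ so that $\theta M=O(1)$ pushes the radius of $B'$ down to order $1/M$. Then $B'$ carries the mass of $Q_N$ only if $M^3=o(N)$.

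\emph{Outer region.} Your displayed integrand keeps all of $\prod_{i\le N}2\cosh(u\cdot\xi_i)$ and multiplies it by $e^{\frac12\sum_{i\le k}(u\cdot\xi_i)^2}$. In doing so it discards the factor $\prod_{i\le k}\cosh(u\cdot\xi_i)^{-1}$ contained in $e^{-X}$, which is where the spurious $(1+\rho)$ comes from. The suggested remedy then controls $Q_N((B')^{c})$ rather than $\E_{Q_N}[e^{-X}\mathbf 1_{(B')^{c}}]$, and the fallback changes the statement.

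The missing idea is that the weight $e^{-X}$ is exactly absorbed by the density of $Q_N$. With $t_i=u\cdot\xi_i$ one has $e^{-X}=\prod_{i\le k}e^{t_i^2/2}/\cosh t_i$, so on $\mathcal G_N\subseteq\mathcal E_N(\varepsilon)$ the measure $e^{-X}\,dQ_N$ has density
\[
\mathcal Z_N^{-1}2^N e^{-\frac{N}{2\beta}\|u\|^2}\prod_{i\le k}e^{t_i^2/2}\prod_{i>k}\cosh t_i
\;\le\;\mathcal Z_N^{-1}2^N e^{-\frac{N}{2\beta}\|u\|^2+\frac12\sum_{i\le N}t_i^2}
\;\le\;\mathcal Z_N^{-1}2^N e^{-\alpha N\|u\|^2}.
\]
This is the very pointwise bound behind Lemma~\ref{lem:QN_domination}, so $e^{-X}Q_N\le C_0\gamma_\alpha$ with the same $C_0$. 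Since $-X\ge0$ gives $0\le e^{-X}-1\le|X|e^{-X}$, you get
\[
\E_{Q_N}\big|e^{-X}-1\big|\le\E_{Q_N}\big[|X|e^{-X}\big]\le C_0\,\E_{\gamma_\alpha}|X|\le C\sum_{i\le k}\E_{\gamma_\alpha}(u\cdot\xi_i)^4\le C\,\frac{kM^2}{N^2}\longrightarrow0,
\]
with no truncation and no Cauchy--Schwarz. The Cauchy--Schwarz step is precisely what breaks your argument: it trades the compensated weight $e^{-X}$ for $e^{-2X}$, which is not compensated.

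If you prefer not to rely on the size of $C_0=Z_G/Z_Q$, which is in fact not uniform in $M$, interpolate instead. Write $\log\E_{Q_N}e^{-X}=\int_0^1\E_{Q^{(s)}}|X|\,ds$ with $Q^{(s)}\propto e^{-sX}Q_N$. Each $Q^{(s)}$ is even and $N(\beta^{-1}-1-\varepsilon)$-strongly log-concave on $\mathcal E_N(\varepsilon)$, so $\E_{Q^{(s)}}(u\cdot\xi_i)^4\le CM^2/N^2$, and the same conclusion follows.

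For comparison, the paper splits on $B_{N,R}=\{\|u\|\le R\sqrt{M/N}\}$, uses smallness of $\Delta_k$ there, and bounds the complement by $2Q_N(B_{N,R}^c)$. That step tacitly uses $|e^{-\Delta_k}-1|\le2$, which fails because $e^{-\Delta_k}\ge1$ is unbounded. So the tail difficulty you flagged is genuine, and the observation above is what closes it in both arguments.
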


		\begin{proof}
			Fix $R>0$ and define the ball $
			B_{N,R}$ as above.
			By Gaussian domination of $Q_N$ (Lemma~\ref{lem:QN_domination}), we have
			$\sup_N Q_N(B_{N,R}^c)\to0$ as $R\to\infty$, in $\mathbb P_\xi$-probability.
			
			Write $\Delta_k(u):=\sum_{i=1}^k R(u\cdot\xi_i)$. Using
			$|e^{-x}-1|\le e^{|x|}\,|x|$, we obtain
			\[
			\E_{Q_N}\Big[|e^{-\Delta_k(u)}-1|\Big]
			\le
			\E_{Q_N}\Big[e^{|\Delta_k(u)|}\,|\Delta_k(u)|\,\mathbf 1_{B_{N,R}}\Big]
			+
			2\,Q_N(B_{N,R}^c).
			\]
			We treat the two terms separately:
			
			a) On $\mathcal G_N$ and $B_{N,R}$ we have $\|u\|^2\lesssim M/N$, hence
			\[
			\sum_{i=1}^k (u\cdot\xi_i)^2
			=
			k\,u^\top\widehat\Sigma_k u
			\le
			k(1+\varepsilon)\|u\|^2
			\lesssim
			k\,\frac{M}{N}
			\asymp
			M,
			\]
			so in particular $|u\cdot\xi_i|\le \|u\|\,\|\xi_i\|\lesssim R\,\frac{M}{\sqrt N}$.
			Since $M=o(\sqrt N)$, this bound tends to $0$ uniformly on $B_{N,R}$.
			Using $|R(t)|\le C|t|^4$, we get for $N$ large (depending on $R$),
			\[
			|\Delta_k(u)|
			\le
			C\sum_{i=1}^k |u\cdot\xi_i|^4
			\le
			C\Big(\max_{i\le k}|u\cdot\xi_i|^2\Big)\sum_{i=1}^k (u\cdot\xi_i)^2
			=o(1)\cdot O(M)
			=o(1),
			\]
			uniformly on $B_{N,R}$.
			Hence $e^{|\Delta_k(u)|}\le 2$ for $N$ large, and therefore
			\[
			\E_{Q_N}\Big[e^{|\Delta_k(u)|}\,|\Delta_k(u)|\,\mathbf 1_{B_{N,R}}\Big]
			\le
			2\,\E_{Q_N}\big[|\Delta_k(u)|\big].
			\]
			Finally, using again $|R(t)|\le C|t|^4$ and Corollary~\ref{lem:proj_moments},
			\[
			\E_{Q_N}\big[|\Delta_k(u)|\big]
			\le
			C\sum_{i=1}^k \E_{Q_N}[(u\cdot\xi_i)^4]
			\le
			C\,k\,\frac{M^2}{N^2}
			\sim
			C\,\rho\,\frac{M^2}{N}
			\longrightarrow 0.
			\]
			
			\medskip
			b) For the tail term we
			choose $R$ large so that $Q_N(B_{N,R}^c)\le \delta$ with high probability,
			and then let $N\to\infty$.
			Combining Step~a) and Step~b) yields the claim.
		\end{proof}
		
		Therefore, on $\mathcal G_N$
		\begin{equation}\label{eq:LN_quad_Mgrowing}
			L_N(\sigma_{1:k})
			=
			\E_{Q_N}\Big[\exp\Big(\sqrt k\,u\cdot S_{N,k}-\frac{k}{2}\|u\|^2\Big)\Big]+r_N
		\end{equation}
		where $\sup_{\sigma_{1:k}}|r_N|\to0$ in $\mathbb P_\xi$-probability. 
		
		\medskip
		In the next step we apply again the Taylor expansion
		\[
		\log\cosh t=\frac{t^2}{2}+R(t), \qquad |R(t)|\le C|t|^4,
		\]
		this time to the full sum $\sum_{i=1}^N \log\cosh(u\cdot\xi_i)$
		appearing in the definition of the mixing measure $Q_N$ given by
		\[
		Q_N(du)\propto
		\exp\Big(-\frac{N}{2\beta}\|u\|^2\Big)\prod_{i=1}^N \cosh(u\cdot\xi_i)\,du.
		\]
		By the same arguments as in the previous step, on $\mathcal G_N$, this yields
		\[
		\sum_{i=1}^N\log\cosh(u\cdot\xi_i)
		=
		\frac12\sum_{i=1}^N (u\cdot\xi_i)^2 + \sum_{i=1}^N R(u\cdot\xi_i)
		=
		\frac{N}{2}\|u\|^2 + \Delta_N(u),
		\]
		where $\Delta_N(u)=\sum_{i=1}^N R(u\cdot\xi_i)$ satisfies
		\[
		\E_{Q_N}[|\Delta_N(u)|]\le C\frac{M^2}{N}\to0
		\quad\text{on }\mathcal G_N.
		\]

		The following lemma allows to replace expectation with respect to $Q_N$ by expectation with respect to an appropriate Gaussian measure for an
		expoential intergal we need. While Corollary~\ref{lem:proj_moments} provides moment bounds under $Q_N$,
		the next lemma shows that, for the exponential observables appearing in the
		likelihood ratio, the mixing measure $Q_N$ may be asymptotically replaced by a Gaussian.

		\begin{lemma}
			\label{lem:QN_gauss_Mgrowing}
			Fix $\beta<1$ and assume $M=o(\sqrt N)$.
			Let $\tau^2=\frac{\beta}{1-\beta}$ and $G_N=\mathcal N(0,\tau^2N^{-1}I_M)$.
			Then on the disorder event $\mathcal G_N$,
			for every sequence $s_N\in\R^M$ with $\|s_N\|=O(\sqrt M)$ and
			every $k=k(N)$ with $k/N\to\rho\in(0,1)$,
			\[
			\E_{Q_N}\!\left[
			e^{\sqrt{k}\,u\cdot s_N-\frac{k}{2}\|u\|^2}
			\right]
			=
			\E_{G_N}\!\left[
			e^{\sqrt{k}\,u\cdot s_N-\frac{k}{2}\|u\|^2}
			\right]\,(1+o(1)),
			\]
			where $o(1)\to0$ in $\P_\xi$-probability.
		\end{lemma}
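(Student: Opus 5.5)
The plan is to express both sides through one Gaussian computation and to isolate exactly where $Q_N$ and $G_N$ differ. Write $f(u)=e^{\sqrt k\,u\cdot s_N-\frac k2\|u\|^2}$ and $E_N:=\widehat\Sigma_N-I_M$. The Taylor expansion $\log\cosh t=\frac{t^2}{2}+R(t)$ used above gives
\[
\frac{dQ_N}{dG_N}(u)=\frac{\exp\big(\frac N2 u^\top E_N u+\Delta_N(u)\big)}{\E_{G_N}\big[\exp\big(\frac N2 U^\top E_N U+\Delta_N(U)\big)\big]}.
\]
Hence $\E_{Q_N}[f]/\E_{G_N}[f]$ is a ratio $\E_{\tilde G}[e^{\Phi}]\,/\,\E_{G_N}[e^{\Phi}]$, where $\Phi(u)=\frac N2u^\top E_Nu+\Delta_N(u)$. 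Here $\tilde G$ is the tilted law $f\,dG_N/\E_{G_N}f$. Completing the square, $\tilde G$ is Gaussian with precision $\Lambda=(N/\tau^2+k)I_M$ and mean $m_N=\sqrt k\,s_N/(N/\tau^2+k)$. By assumption $\|s_N\|=O(\sqrt M)$ and $k\asymp N$, so $\|m_N\|^2=O(M/N)$. Thus $\tilde G$ lives on the same scale $\|u\|\lesssim\sqrt{M/N}$ as $G_N$. The lemma therefore reduces to showing $\E_{\tilde G}[e^\Phi]=1+o(1)$ and $\E_{G_N}[e^\Phi]=1+o(1)$, uniformly in $s_N$ with $\|s_N\|\le C\sqrt M$.

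I would treat the two parts of $\Phi$ separately. For $\Delta_N$, repeat the argument of Lemma~\ref{lem:remove_R} and Corollary~\ref{lem:proj_moments}, now with $\tilde G$ in place of $Q_N$. Under $\tilde G$, $u\cdot\xi_i$ is Gaussian with variance $O(M/N)$ and mean $m_N\cdot\xi_i=O(M/\sqrt N)=o(1)$. This gives $\E_{\tilde G}|\Delta_N|\lesssim N(M/N)^2=M^2/N\to0$. The exponential moment is handled by restricting to the ball $B_{N,R}$ and using Gaussian tails outside it. For the quadratic form I would use a structural identity: since $\xi_i^\nu\in\{\pm1\}$, the diagonal of $\widehat\Sigma_N$ is identically $1$, so $\tr(E_N)=0$ \emph{exactly}. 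Writing $u=m_N+\Lambda^{-1/2}Z$ gives
\[
\tfrac N2u^\top E_Nu=\tfrac N2 m_N^\top E_Nm_N+N\,m_N^\top E_N\Lambda^{-1/2}Z+\tfrac N2 Z^\top\Lambda^{-1/2}E_N\Lambda^{-1/2}Z.
\]
The last term is a centered Gaussian chaos, because its mean is $\frac N2\tr(E_N)/(N/\tau^2+k)=0$. Its variance is $O(\|E_N\|_F^2)$. The entries of $E_N$ are $O_{\P_\xi}(N^{-1/2})$, so $\|E_N\|_F^2=O(M^2/N)\to0$. By Hanson--Wright, which applies because $\|E_N\|_{\op}\le\varepsilon$ is small on $\mathcal G_N$, its exponential moment is $1+o(1)$. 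The linear term has variance $N^2\|E_Nm_N\|^2/N\lesssim N\varepsilon^2 M/N$. For $G_N$ itself ($m_N=0$) only the chaos term is present, so $\E_{G_N}[e^\Phi]=1+o(1)$ unconditionally in $s_N$.

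The main obstacle is the deterministic shift $\frac N2m_N^\top E_Nm_N$ and the companion linear term. Both appear only in the numerator and must be $o(1)$ uniformly over $\|s_N\|=O(\sqrt M)$. The crude bound $\frac N2\|E_N\|_{\op}\|m_N\|^2\lesssim M\|E_N\|_{\op}\asymp M^{3/2}N^{-1/2}$ is not small under $M=o(\sqrt N)$ alone. What I would try first is to sharpen the disorder event $\mathcal G_N$. I would replace the fixed $\varepsilon$ by the actual size of $E_N$, and bound $s^\top E_Ns$ for $\|s\|\le C\sqrt M$ with a net over the sphere combined with Lemma~\ref{lem:covconcentration}. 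Since $\Lambda^{-1}$ is a scalar multiple of the identity, it suffices to control $\sup_{\|s\|\le C\sqrt M}|s^\top E_N s|$. Fixing $s$ first, $s^\top E_Ns=\frac1N\sum_i\big((s\cdot\xi_i)^2-\|s\|^2\big)$ is $O(M/\sqrt N)$. With this bound the shift is $O(M/\sqrt N)=o(1)$ under $M=o(\sqrt N)$, so the argument closes for each fixed sequence $s_N$. The genuinely delicate point is uniformity over all $s_N$. The linear term is handled by the same fixed-direction bound, applied to $\|E_Nm_N\|$.
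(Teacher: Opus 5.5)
Your route differs from the paper's, and in one respect it is more accurate. The paper writes $dQ_N/dG_N=e^{\Delta_N}/\E_{G_N}[e^{\Delta_N}]$. That amounts to replacing $\frac12\sum_i(u\cdot\xi_i)^2=\frac N2u^\top\widehat\Sigma_Nu$ by $\frac N2\|u\|^2$. It then asserts $e^{\Delta_N}=1+o(1)$ uniformly on $B_{N,R}$, discards $B_{N,R}^c$, and ends with only an additive $o(1)$.

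You keep the term $\frac N2u^\top E_Nu$ that the paper drops. Tilting $G_N$ by $f$ gives a genuinely multiplicative comparison. This matters because $\E_{G_N}[f]=(1+\lambda_N)^{-M/2}\exp\big(\frac{\lambda_N}{2(1+\lambda_N)}\|s_N\|^2\big)$ is exponentially small or large in $M$, and $\sup f=e^{\|s_N\|^2/2}$. The identity $\tr(E_N)=0$ is exactly what centres the chaos.

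For a \emph{deterministic} sequence $s_N$, your fixed-direction estimates close the argument: $s_N^\top E_Ns_N=O_{\P_\xi}(M/\sqrt N)$ and $\E_\xi\|E_Ns_N\|^2\le M\|s_N\|^2/N$. One repair is needed. Copying Lemma~\ref{lem:remove_R} only yields $|\Delta_N|\lesssim\max_i(u\cdot\xi_i)^2\sum_i(u\cdot\xi_i)^2\lesssim R^4M^3/N$ on the ball, which is not $o(1)$ once $M\gg N^{1/3}$. Use instead $R(t)=\log\cosh t-\frac{t^2}{2}\le0$, so that $1-|\Delta_N|\le e^{\Delta_N}\le1$, together with Cauchy--Schwarz against $e^{\frac N2u^\top E_Nu}$.

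The genuine gap is the uniformity you flag at the end, and the net argument you propose cannot supply it. The lemma is asserted on $\mathcal G_N$ for every $s_N$. The paper applies it with $s_N=S_{N,k}(\sigma)$, which depends on $\xi$, uniformly in $\sigma_{1:k}$. But $\sup_{\|s\|\le C\sqrt M}|s^\top E_Ns|=C^2M\|E_N\|_{\op}$, and $\|E_N\|_{\op}^2\ge\|E_N\|_F^2/M\asymp M/N$. So any net reproduces exactly your crude bound $M^{3/2}N^{-1/2}$.

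This is not a weakness of the method; the uniform statement itself fails in part of the range:
\begin{itemize}
\item Since $\tr(E_N)=0$, Jensen gives $\E_{\tilde G}[e^\Phi]\ge\exp\big(\frac N2m_N^\top E_Nm_N+\E_{\tilde G}[\Delta_N]\big)$.
\item Since $\Delta_N\le0$, $\E_{G_N}[e^\Phi]\le\E_{G_N}[e^{\frac N2U^\top E_NU}]=1+O_{\P_\xi}(M^2/N)$.
\item Take $s_N$ of norm $C\sqrt M$ in a typical direction of the span of the eigenvectors of $E_N$ with eigenvalue at least $c\sqrt{M/N}$. This span has dimension of order $M$, because $\tr(E_N)=0$, $\|E_N\|_F^2\asymp M^2/N$ and $\|E_N\|_{\op}\lesssim\sqrt{M/N}$.
\item Then the shift is of order $C^2M^{3/2}N^{-1/2}$ while $\E_{\tilde G}[\Delta_N]=O(M^2/N)$, so the ratio in the lemma tends to infinity whenever $N^{1/3}\ll M\ll N^{1/2}$.
\end{itemize}

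What can be proved is therefore one of the following:
\begin{itemize}
\item the statement for fixed $s_N$, which your argument gives;
\item a uniform version under $M=o(N^{1/3})$;
\item a version averaged over $\sigma\sim\pi^{\otimes k}$, where $\E_\sigma[S_{N,k}^\top E_NS_{N,k}]=\tr(\widehat\Sigma_kE_N)=\tr\big((\widehat\Sigma_k-I_M)E_N\big)=O_{\P_\xi}(M^2/N)$.
\end{itemize}
The paper's proof does not settle this point either: because of the dropped term, it never meets it.
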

		
		\begin{proof}
			From the above arguments we deduce that, 
			on $\mathcal E_N(\varepsilon)$, the density of $Q_N$ can be written as
			\[
			\frac{dQ_N}{du}(u)
			=
			\frac{1}{Z_N}
			\exp\!\Big(-\frac{N}{2\tau^2}\|u\|^2\Big)
			\exp\!\Big(\Delta_N(u)\Big),
			\]
			where
			$
			\Delta_N(u):=\sum_{i=1}^N R(u\cdot\xi_i).
			$
			Equivalently,
			\begin{equation}\label{eq:RN_QN_GN}
				\frac{dQ_N}{dG_N}(u)
				=
				\frac{\exp(\Delta_N(u))}{\E_{G_N}[\exp(\Delta_N(U))]},
			\end{equation}
			where $G_N=\mathcal N(0,\tau^2 N^{-1} I_M)$.
			
			\medskip
			As seen above	
			$
			\E_{Q_N}[|\Delta_N(u)|]\longrightarrow 0
			\quad\text{on }\mathcal G_N.
			$
			
			\medskip
			
			Fix $R>0$ and define 
			$B_{N,R}:=\Big\{\|u\|\le R\sqrt{\frac{M}{N}}\Big\}
			$ as above.
			By Gaussian concentration,
			$
			\sup_N G_N(B_{N,R}^c)\xrightarrow[R\to\infty]{}0,
			$
			and by Gaussian domination of $Q_N$ the same holds for $Q_N(B_{N,R}^c)$
			in $\P_\xi$-probability.

			Again let
			$
			f(u):=\exp\!\Big(\sqrt k\,u\cdot s_N-\frac{k}{2}\|u\|^2\Big).
			$
			Using \eqref{eq:RN_QN_GN} we may write
			\[
			\E_{Q_N}[f]
			=
			\frac{\E_{G_N}[f(U)\exp(\Delta_N(U))]}
			{\E_{G_N}[\exp(\Delta_N(U))]}.
			\]
			On $B_{N,R}$, $\exp(\Delta_N(U))=1+o(1)$ uniformly, while the contributions
			from $B_{N,R}^c$ vanish uniformly in $N$ by the tail bounds.
			Hence,
			\[
			\E_{Q_N}[f]
			=
			\E_{G_N}[f]\,+o(1)
			\]
			in $\P_\xi$-probability.
			This proves the claim.
		\end{proof}

		
		Now, on $\mathcal G_N$, Lemma~\ref{lem:QN_gauss_Mgrowing} allows us to replace $Q_N$ in
		\eqref{eq:LN_quad_Mgrowing} by $G_N=\mathcal N(0,\tau^2N^{-1}I_M)$ at a multiplicative
		$(1+o(1))$ cost, uniformly in $\sigma_{1:k}$. With $a=\sqrt{k}\,S_{N,k}$,
		$B=kI_M$ and $\Sigma=\tau^2N^{-1}I_M$, the Gaussian identity
		\[
		\E\exp\!\Big(a\cdot u-\tfrac12 u^\top Bu\Big)
		=\det(I+\Sigma B)^{-1/2}\exp\!\Big(\tfrac12 a^\top(\Sigma^{-1}+B)^{-1}a\Big)
		\]
		yields $\det(I+\Sigma B)^{-1/2}=(1+\lambda_N)^{-M/2}$ and
		$\tfrac12 a^\top(\Sigma^{-1}+B)^{-1}a=\frac{\lambda_N}{2(1+\lambda_N)}\|S_{N,k}\|^2$,
		where $\lambda_N=\tau^2 k/N$, giving 	
		\begin{equation}\label{eq:LN_form_Mgrowing}
			L_N(\sigma_{1:k})
			=
			(1+\lambda_N)^{-M/2}
			\exp\Big(\frac{\lambda_N}{2(1+\lambda_N)}\|S_{N,k}\|^2\Big)
			+ r_N,
		\end{equation}
		where
		$\sup_{\sigma_{1:k}}|r_N|\to0
		\quad\text{in }\mathbb P_\xi\text{-probability}.$
		
		\medskip
		Note that under $\pi^{\otimes k}$ and on $\mathcal G_N$,
		\[
		\|S_{N,k}\|^2
		=
		\frac{1}{k}\sigma^\top(\Xi\Xi^\top)\sigma,
		\qquad \Xi=(\xi_1^\top,\dots,\xi_k^\top),
		\]
		is a quadratic form in a Rademacher vector.
		By the Hanson--Wright inequality (see \cite[Theorem 6.2.1]{VershyninHDP}) and on the event $\mathcal G_N$,
		\[
		\frac{1}{M}\|S_{N,k}\|^2 \xrightarrow{\ \pi^{\otimes k}\ } 1
		\qquad\text{provided } M=o(k) \text{  (which is true in our situation)}.
		\]
		
		Indeed, write
		\[
		S_{N,k}=\frac1{\sqrt{k}}\Xi^\top\sigma,
		\qquad
		\|S_{N,k}\|^2=\frac1{k}\sigma^\top(\Xi\Xi^\top)\sigma,
		\]
		where $\Xi$ is the $k\times M$ matrix with rows $\xi_i^\top$.
		Setting $A:=k^{-1}\Xi\Xi^\top$, we have
		$\|S_{N,k}\|^2=\sigma^\top A\sigma$ and
		$\E_{\pi^{\otimes k}}[\sigma^\top A\sigma]=\tr(A)=M$.
		On $\mathcal G_N$ one has
		$\|A\|_{\op}=\|\widehat\Sigma_k\|_{\op}\le 1+\varepsilon$
		and $\|A\|_F^2=\tr(A^2)\le M(1+\varepsilon)$ since $$\tr(A^2)=\tr((\widehat\Sigma_k)^2)\le \mathrm{rank}(\widehat\Sigma_k)\|\widehat\Sigma_k\|_{\op}^2
		\le M(1+\varepsilon)^2.$$
		Indeed,
		\[
		\tr\left(\left(\widehat{\Sigma}_k\right)^2\right) \leq \mathrm{rank}\left(\widehat{\Sigma}_k\right) \left\|\widehat{\Sigma}_k\right\|_{\text{op}}^2.
		\]
		In our setting $\widehat{\Sigma}_k$ is an $M \times M$ matrix, so $\mathrm{rank}\left(\widehat{\Sigma}_k\right) \leq M$, hence
		\[
		\tr\left(\left(\widehat{\Sigma}_k\right)^2\right) \leq M \left\|\widehat{\Sigma}_k\right\|_{\text{op}}^2 \leq M(1+\varepsilon)^2 \quad \text{on } \mathcal{G}_N.
		\]
		
		Hence, by the Hanson--Wright inequality
		\cite[Theorem~6.2.1]{VershyninHDP}, for every $\delta>0$,
		\begin{equation}\label{eq:cons_Hanson_wright}
			\pi^{\otimes k}\!\left(
			\left|\frac1M\|S_{N,k}\|^2-1\right|\ge\delta
			\right)
			\le
			2\exp(-c\,\delta^2 M),
		\end{equation}
		which tends to $0$ as $M\to\infty$.
		
		Since $L_N$ depends on $\sigma$ only through $\|S_{N,k}\|^2$
		via \eqref{eq:LN_form_Mgrowing}, it suffices to control the
		fluctuations of $\|S_{N,k}\|^2$ under $\pi^{\otimes k}$.
		By \eqref{eq:cons_Hanson_wright}, on $\mathcal G_N$ we have
		\[
		\frac{1}{M}\|S_{N,k}\|^2 \xrightarrow{\ \pi^{\otimes k}\ } 1.
		\]
		
		Recall that on $\mathcal G_N$,
		\[
		L_N(\sigma_{1:k})
		=
		(1+\lambda_N)^{-M/2}
		\exp\!\Big(
		\frac{\lambda_N}{2(1+\lambda_N)}\|S_{N,k}\|^2
		\Big)
		+ r_N,
		\qquad
		\sup_{\sigma_{1:k}}|r_N|\to 0.
		\]
		
		Let $Z_N\sim\mathcal N(0,I_M)$ and define the Gaussian proxy
		\[
		\widetilde L_N
		:=
		(1+\lambda_N)^{-M/2}
		\exp\!\Big(
		\frac{\lambda_N}{2(1+\lambda_N)}\|Z_N\|^2
		\Big),
		\]
		which is the likelihood ratio
		\[
		\frac{d\mathcal N(0,(1+\lambda_N)I_M)}
		{d\mathcal N(0,I_M)}(Z_N).
		\]
		
		By the concentration of $\|S_{N,k}\|^2$ and
		Lemma~\ref{lem:UI_LN_correct} (uniform integrability),
		we obtain
		\[
		\E_{\pi^{\otimes k}}|L_N-1|
		=
		\E|\widetilde L_N-1|
		+ o(1)
		\qquad\text{in }\mathbb P_\xi\text{-probability}.
		\]
		
		Therefore,
		\begin{equation} \label{eq:TV_LN}
			\TV(\mu_N^{(k)},\pi^{\otimes k})
			=
			\frac12\,\E_{\pi^{\otimes k}}|L_N-1|
			\longrightarrow
			\TV\big(
			\mathcal N(0,I_M),
			\mathcal N(0,(1+\lambda)I_M)
			\big)
		\end{equation}
		in $\mathbb P_\xi$-probability.
		
		As intermediate step we need to 
		check uniform integrability of $(L_N)_N$ under $\pi^{\otimes k}$ on $\mathcal G_N$:
		
		\begin{lemma}
			\label{lem:UI_LN_correct}
			Assume $k/N\to\rho\in(0,1)$ and $M=o(k)$, and set
			$\lambda_N:=\tau^2 k/N$ with $\tau^2=\beta/(1-\beta)$, so $\lambda_N\to\lambda>0$.
			Let $L_N$ be given on $\mathcal G_N$ by
			\[
			L_N
			=
			(1+\lambda_N)^{-M/2}
			\exp\!\Big(\alpha_N\|S_{N,k}\|^2\Big),
			\qquad \text{with}\quad
			\alpha_N:=\frac{\lambda_N}{2(1+\lambda_N)}.
			\]
			Then there exists $\eta>0$ such that
			\[
			\sup_N \E_{\pi^{\otimes k}}\big[L_N^{1+\eta}\mathbf 1_{\mathcal G_N}\big] <\infty.
			\]
			In particular, $(L_N)_N$ is uniformly integrable under $\pi^{\otimes k}$ on $\mathcal G_N$.
		\end{lemma}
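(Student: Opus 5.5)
The plan is to compute the $(1+\eta)$-th moment of $L_N$ almost explicitly, comparing the Rademacher quadratic form $\|S_{N,k}\|^2=\sigma^\top A\sigma$ with $A=k^{-1}\Xi\Xi^\top$ to its Gaussian analogue. On $\mathcal G_N$ one has $\|A\|_{\op}=\|\widehat\Sigma_k\|_{\op}\le 1+\varepsilon$. Write $p:=1+\eta$. Then
\[
\E_{\pi^{\otimes k}}\big[L_N^{p}\big]=(1+\lambda_N)^{-pM/2}\,\E_{\pi^{\otimes k}}\big[\exp\big(p\alpha_N\,\sigma^\top A\sigma\big)\big],
\]
so everything reduces to an exponential moment of a quadratic form in a Rademacher vector.

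\emph{Step 1 (linearization).} I linearize the quadratic form by a second Hubbard--Stratonovich transform. Write $\sigma^\top A\sigma=\|\Xi^\top\sigma\|^2/k$ and let $g\sim\mathcal N(0,I_M)$ be independent of $\sigma$. Then
\[
\exp\big(p\alpha_N\|\Xi^\top\sigma\|^2/k\big)=\E_g\exp\big(\sqrt{2p\alpha_N/k}\;g\cdot\Xi^\top\sigma\big).
\]
Take the $\sigma$-expectation first and use $\cosh x\le e^{x^2/2}$. This gives
\[
\E_\sigma\big[\cdots\big]\le \E_g\exp\big(p\alpha_N\, g^\top\widehat\Sigma_k g\big).
\]
This is a Gaussian integral, equal to $\det(I-2p\alpha_N\widehat\Sigma_k)^{-1/2}$. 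It is finite as soon as $2p\alpha_N(1+\varepsilon)<1$. Since $2\alpha_N=\lambda_N/(1+\lambda_N)<1$ uniformly, I fix $\varepsilon$ and then $\eta>0$ small enough that $(1+\eta)(1+\varepsilon)\lambda/(1+\lambda)<1$, with some room left for $N$ large.

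\emph{Step 2 (collecting the bound).} Combining the two factors, the moment is bounded by
\[
\Big[(1+\lambda_N)^{p}\,\det\big(I-p\tfrac{\lambda_N}{1+\lambda_N}\widehat\Sigma_k\big)^{1/M}\Big]^{-M/2}.
\]
On $\mathcal G_N$ the eigenvalues of $\widehat\Sigma_k$ lie in $[1-\varepsilon,1+\varepsilon]$. So the bracket equals, up to $O(\varepsilon)$ corrections, the Gaussian-proxy quantity $(1+\lambda)^{\eta}(1-\eta\lambda)$. This is exactly $\E\widetilde L_N^{\,p}$ for the proxy $\widetilde L_N$ introduced above.

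\emph{Step 3 (uniformity in $N$; main obstacle).} The delicate point is that the bound has the form $c_N(\eta,\varepsilon)^{-M/2}$. It is therefore uniformly bounded precisely when the exponent $\tfrac M2\log c_N$ stays bounded. For fixed $M$ this is immediate from Step 2, since $c_N$ converges to a positive constant. For $M\to\infty$, however, $\log\big((1+\lambda)^\eta(1-\eta\lambda)\big)=\eta\log(1+\lambda)+\log(1-\eta\lambda)$ is strictly negative. So the bound grows like $e^{c\eta^2\lambda^2M}$ unless $\eta$ is allowed to depend on $M$.

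I would therefore first attempt to absorb this factor by refining Steps 1--2. The idea is to expand the determinant to second order, keeping the precise cancellation between $(1+\lambda_N)^{-pM/2}$ and $\det(\cdot)^{-1/2}$, and to use $\operatorname{tr}(\widehat\Sigma_k-I)=0$ (since $\|\xi_i\|^2=M$) to kill the linear $\varepsilon$-term. If a residual $e^{c\eta^2M}$ remains, I expect the statement to hold as written only for bounded $M$. In that case the argument would have to identify an $\eta$ compatible with the growth of $M$, which is the step where I anticipate the real difficulty.

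Uniform integrability then follows from the uniform $L^{1+\eta}$ bound by de la Vallée-Poussin.
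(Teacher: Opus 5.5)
Your Steps~1--2 are correct, and for bounded $M$ they are a complete proof. On $\mathcal G_N$ (with $\varepsilon<1/\lambda$, to leave room for $\eta$) they give $\E_{\pi^{\otimes k}}[L_N^{1+\eta}]\le(1+\lambda_N)^{-(1+\eta)M/2}\det(I_M-2t_N\widehat\Sigma_k)^{-1/2}$ with $t_N=(1+\eta)\alpha_N$, and this is uniformly bounded when $M$ is.

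The gap is Step~3, and it cannot be closed: for $M\to\infty$ the statement is false. This case is allowed by $M=o(k)$ and is the one needed in Theorem~\ref{thm:stop_macro_Msmall}.

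The growth is not an $\varepsilon$-artifact. At $\widehat\Sigma_k=I_M$ your bound is the exact value $\E\widetilde L_N^{\,1+\eta}=[(1+\lambda_N)^{\eta}(1-\eta\lambda_N)]^{-M/2}$. Its logarithm is $\frac M2\,\eta\,(\lambda_N-\log(1+\lambda_N))+O(M\eta^2)$, i.e.\ $M\eta$ times $\mathrm{KL}\big(\mathcal N(0,1+\lambda_N)\,\|\,\mathcal N(0,1)\big)$, so it is of order $\eta M$, not $\eta^2M$. A second-order expansion of the determinant plus $\tr(\widehat\Sigma_k-I_M)=0$ can at best bring your bound down to this Gaussian value, never below it.

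The Rademacher moment really is that large. In your Step~1 use $\log\cosh x\ge\frac{x^2}{2}-\frac{x^4}{12}$. Then apply Jensen under $\mathcal N\big(0,(I_M-2t\widehat\Sigma_k)^{-1}\big)$, and use concavity of $s\mapsto\log(1-2ts)$ together with $\tr\widehat\Sigma_k=M$. This gives
\[
\E_{\pi^{\otimes k}}\big[e^{t\|S_{N,k}\|^2}\big]\ \ge\ (1-2t)^{-M/2}\exp\Big(-\frac{t^2M^2}{k\,(1-2t(1+\varepsilon))^2}\Big).
\]
Hence $\E_{\pi^{\otimes k}}[L_N^{1+\eta}]\ge[(1+\lambda_N)^{\eta}(1-\eta\lambda_N)]^{-M/2}e^{-O(M^2/k)}\to\infty$ for every fixed $\eta>0$, since $M^2/k=o(M)$. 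This holds directly for small $\eta$, and for larger $\eta$ by Lyapunov's inequality.

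Letting $\eta$ depend on $N$ does not help either, because there is no uniform integrability to be had:
\begin{itemize}
\item By Hanson--Wright, $\|S_{N,k}\|^2/M\to1$ in $\pi^{\otimes k}$-probability.
\item Hence $\frac1M\log L_N\to-\frac12\big(\log(1+\lambda)-\frac{\lambda}{1+\lambda}\big)<0$, so $L_N\to0$ in $\pi^{\otimes k}$-probability.
\item On the other hand, the display with $t=\alpha_N$ gives $\E_{\pi^{\otimes k}}[L_N]\ge e^{-O(M^2/k)}\to1$ for $M=o(\sqrt N)$.
\end{itemize}
This is precisely the asymptotic singularity asserted by Theorem~\ref{thm:stop_macro_Msmall}, and it is incompatible with uniform integrability.

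The paper's proof reaches the same quantity $(1+\lambda_N)^{-(1+\eta)M/2}(1-2t_N)^{-M/2}$. It gets there via \eqref{eq:mgf_bound_SN}, which Lemma~\ref{lem:mgf_quad} does not deliver: that lemma only covers $t\le c/\|A\|_{\op}$ for a small absolute $c$, and gives $e^{tM+Ct^2M(1+\varepsilon)^2}$. Your Step~1 is the better tool here.

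The paper then asserts $(1-2t_*)^{-1}<(1+\lambda)^{1+\eta}$. Since $(1-2t_*)^{-1}=(1+\lambda)/(1-\eta\lambda)$, this is equivalent to $(1+\lambda)^{\eta}(1-\eta\lambda)>1$. That is false, because $(1+\lambda)^{\eta}\le1+\eta\lambda$ for $\eta\le1$. So the paper breaks exactly where you did.

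The correct conclusion is the one you only half-stated: the lemma holds for bounded $M$ and fails whenever $M\to\infty$. For Theorem~\ref{thm:stop_macro_Msmall}, uniform integrability is not needed. The true likelihood ratio has $\E_{\pi^{\otimes k}}[L_N]=1$, so $\TV(\mu_N^{(k)},\pi^{\otimes k})=\E_{\pi^{\otimes k}}[(1-L_N)^+]$. Then $L_N\to0$ in probability (granting \eqref{eq:LN_form_Mgrowing}) yields $\TV(\mu_N^{(k)},\pi^{\otimes k})\to1$ by bounded convergence.
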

		
		\begin{proof}
			Fix $\eta>0$ and define $t_N:=(1+\eta)\alpha_N$.
			Then on $\mathcal G_N$,
			\[
			L_N^{1+\eta}
			=
			(1+\lambda_N)^{-(1+\eta)M/2}\,
			\exp\!\big(t_N\|S_{N,k}\|^2\big).
			\]
			We claim that for every fixed $t<1/2$ there exists $C_t<\infty$ such that
			\begin{equation}\label{eq:mgf_bound_SN}
				\E_{\pi^{\otimes k}}\!\left[\exp\!\big(t\|S_{N,k}\|^2\big)\mathbf 1_{\mathcal G_N}\right]
				\le
				C_t\,(1-2t)^{-M/2}.
			\end{equation}
			This is a standard moment generating function bound for Rademacher quadratic forms with bounded operator norm;
			we record a self-contained statement as Lemma~\ref{lem:mgf_quad} in the appendix.
			
			Assuming \eqref{eq:mgf_bound_SN} for the moment and taking $t=t_N$, we obtain
			\[
			\E_{\pi^{\otimes k}}\big[L_N^{1+\eta}\mathbf 1_{\mathcal G_N}\big]
			\le
			C\,
			(1+\lambda_N)^{-(1+\eta)M/2}\,(1-2t_N)^{-M/2}.
			\]
			Since $t_N\to t_*:=(1+\eta)\frac{\lambda}{2(1+\lambda)}<1/2$ for $\eta$ small,
			we have $1-2t_N\to 1-2t_*>0$.
			Moreover,
			\[
			\frac{1}{1-2t_N}
			\longrightarrow
			\frac{1}{1-2t_*}
			<
			(1+\lambda)^{1+\eta}
			=
			\lim_{N\to\infty}(1+\lambda_N)^{1+\eta}.
			\]
			Hence for all $N$ large enough,
			$(1-2t_N)^{-1}\le (1+\lambda_N)^{1+\eta/2}$,
			and therefore
			\[
			(1+\lambda_N)^{-(1+\eta)M/2}\,(1-2t_N)^{-M/2}
			\le
			(1+\lambda_N)^{-\eta M/4}\le 1.
			\]
			This yields $\sup_N \E[L_N^{1+\eta}\mathbf 1_{\mathcal G_N}]<\infty$.
			
			It remains to prove \eqref{eq:mgf_bound_SN}.
			To this end, let us abuse notation and write $\sigma=\sigma_{1:k}=(\sigma_1,\dots,\sigma_k)$ and let $\Xi$ be the $k\times M$ matrix
			with rows $\xi_i^\top$. Then
			\[
			S_{N,k}=\frac{1}{\sqrt{k}}\Xi^\top\sigma,
			\qquad
			\|S_{N,k}\|^2=\frac{1}{k}\sigma^\top(\Xi\Xi^\top)\sigma.
			\]
			On $\mathcal G_N$ one has $\|\widehat\Sigma_k\|_{\op}\le 1+\varepsilon$, hence
			\[
			\|\Xi\Xi^\top\|_{\op}=\|\Xi^\top\Xi\|_{\op}=k\|\widehat\Sigma_k\|_{\op}\le k(1+\varepsilon).
			\]
			Therefore $\|S_{N,k}\|^2=\sigma^\top A\sigma$ is a Rademacher quadratic form with
			$\|A\|_{\op}\le 1+\varepsilon$ on $\mathcal G_N$. Applying Lemma~\ref{lem:mgf_quad}
			yields \eqref{eq:mgf_bound_SN}.
		\end{proof}
		
		Jumping back to \eqref{eq:TV_LN} we have
		\[
		\TV(\mu_N^{(k)},\pi^{\otimes k})
		=
		\frac12\,\E_{\pi^{\otimes k}}|L_N-1|
		\longrightarrow
		\TV\big(\mathcal N(0,I_M),\mathcal N(0,(1+\lambda)I_M)\big)
		\]
		in $\P_\xi$-probability.
		
		\medskip
		
		To finish the proof, let $X\sim\mathcal N(0,I_M)$ and $Y\sim\mathcal N(0,(1+\lambda)I_M)$.
		Then $\|X\|^2\sim\chi^2_M$ and $\|Y\|^2\sim(1+\lambda)\chi^2_M$.
		
		Let $r_\ast^2:=\frac{M(1+\lambda)\log(1+\lambda)}{\lambda}$be the unique radius where the two densities cross.
		Define the (optimal) set is $$A_\ast:=\{\|x\|\ge r_\ast\}.$$ Then,
		\[
		\TV(\mathcal N(0,I_M),\mathcal N(0,(1+\lambda)I_M))
		=
		\P(Y\in A_\ast)-\P(X\in A_\ast).
		\]
		Since $\chi^2_M/M\to1$ in probability and concentrates at scale $\sqrt M$,
		one has $\|X\|^2/M\to1$ while $\|Y\|^2/M\to 1+\lambda$,
		and $r_\ast^2/M$ lies strictly between $1$ and $1+\lambda$.
		Therefore $\P(X\in A_\ast)\to0$ and $\P(Y\in A_\ast)\to1$, hence
		\[
		\TV(\mathcal N(0,I_M),\mathcal N(0,(1+\lambda)I_M))\longrightarrow 1
		\qquad (M\to\infty).
		\]
		Combining the previous steps yields $\TV(\mu_N^{(k)},\pi^{\otimes k})\to1$ in $\P_\xi$-probability.
	\end{proof}

	\section{ Proof of Theorem \ref{thm:crit_poc}}

	\subsection{Hubbard--Stratonovich representation at $\beta=1$}
	
	We start the proof of Theorem \ref{thm:crit_poc}
	with some considerations that are also useful for the 
	Proof of Theorem \ref{thm:crit_break_sqrtN}.
	
	\begin{lemma}
		\label{lem:HS_likelihood_beta1}
		Let $\beta=1$. Define the Hubbard--Stratonovich mixing measure $\widetilde Q_{N,1}$ on
		$\R^M$ by
		\begin{equation}\label{eq:tildeQN}
			\widetilde Q_{N,1}(du)
			=
			\frac{1}{\widetilde Z_{N,1}}\,
			\exp\!\Big(-\frac{N}{2}\|u\|^2\Big)
			\prod_{i=1}^N \cosh(u\cdot\xi_i)\,du,
		\end{equation}
		
		where $\widetilde Z_{N,1}$ is the normalizing constant making $\widetilde Q_{N,1}$ a probability measure.
		
		Then the likelihood ratio of the $k$-marginal w.r.t.\ $\pi^{\otimes k}$ is
		\[
		L_N(\sigma_{1:k})
		=
		\frac{d\mu_N^{(k)}}{d\pi^{\otimes k}}(\sigma_{1:k})
		=
		\E_{u\sim \widetilde Q_{N,1}}
		\exp\Big(\sum_{i=1}^k\big(\sigma_i\,u\cdot\xi_i-\log\cosh(u\cdot\xi_i)\big)\Big).
		\]
	\end{lemma}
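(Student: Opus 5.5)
This is Lemma~\ref{lem:likelihood_k} specialised to $\beta=1$, with the factor $2$ in $\prod 2\cosh$ absorbed into the normalising constant. We therefore plan to reproduce that computation with $\beta=1$ and check that the normalisations match.

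\textbf{Step 1: linearise the Hamiltonian.} Fix $\sigma_{1:k}$ and write $\mu_N^{(k)}(\sigma_{1:k})$ as $Z_N^{-1}$ times the sum over $\sigma_{k+1:N}$ of $\exp(\frac N2\|m_N(\sigma)\|^2)$. The Hubbard--Stratonovich identity from the proof of Lemma~\ref{lem:HS}, taken with $\beta=1$, gives
\[
\exp\Big(\tfrac N2\|m_N(\sigma)\|^2\Big)=\Big(\tfrac{N}{2\pi}\Big)^{M/2}\int_{\R^M}\exp\Big(-\tfrac N2\|u\|^2+\sum_{i=1}^N\sigma_i\,u\cdot\xi_i\Big)\,du .
\]

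\textbf{Step 2: sum out the remaining spins.} The integrand is nonnegative, so Tonelli allows us to exchange the finite sum over $\sigma_{k+1:N}$ with the integral. The sum then factorises into $\prod_{i=k+1}^N 2\cosh(u\cdot\xi_i)$. Applying the same identity to $Z_N$ yields $\int e^{-N\|u\|^2/2}\prod_{i=1}^N 2\cosh(u\cdot\xi_i)\,du$ with the same prefactor.

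\textbf{Step 3: form the likelihood ratio.} Take the quotient and multiply by $2^k$. The factor $2^k$ cancels the $2^{N-k}$ in the numerator against the $2^N$ in the denominator. The prefactor $(N/2\pi)^{M/2}$ cancels as well.

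\textbf{Step 4: identify the mixing measure.} In the numerator, write
\[
e^{\sum_{i\le k}\sigma_i u\cdot\xi_i}=\exp\Big(\sum_{i\le k}\big(\sigma_i u\cdot\xi_i-\log\cosh(u\cdot\xi_i)\big)\Big)\prod_{i\le k}\cosh(u\cdot\xi_i).
\]
The numerator then becomes the integral of this exponential against the unnormalised density $e^{-N\|u\|^2/2}\prod_{i=1}^N\cosh(u\cdot\xi_i)$. The denominator is exactly $\widetilde Z_{N,1}$, so the ratio is the claimed $\E_{\widetilde Q_{N,1}}$-expectation.

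The only point needing care is the finiteness of $\widetilde Z_{N,1}$. At $\beta=1$ the Gaussian factor is no longer dominating as it was in Lemma~\ref{lem:QN_domination}. However, $Z_N$ is a finite sum of finite terms and equals $(N/2\pi)^{M/2}2^N\widetilde Z_{N,1}$, so $\widetilde Z_{N,1}<\infty$ automatically. This also shows $\widetilde Q_{N,1}$ is a well-defined probability measure, which is the step I expect to be the main (mild) obstacle.
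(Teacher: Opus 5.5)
Your proposal is correct and follows the paper's proof essentially line by line. Both apply the Hubbard--Stratonovich identity at $\beta=1$ and sum out $\sigma_{k+1:N}$ into $\prod_{i>k}2\cosh(u\cdot\xi_i)$. Both then divide by the same representation of $Z_N$ and by $2^{-k}$, and insert $\prod_{i\le k}\cosh(u\cdot\xi_i)\,e^{-\log\cosh(u\cdot\xi_i)}$ to recognise $\widetilde Q_{N,1}$.

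Your remark that $\widetilde Z_{N,1}=(2\pi/N)^{M/2}2^{-N}Z_N<\infty$ is a harmless addition that the paper leaves implicit. One small correction: the Gaussian factor still dominates at $\beta=1$, because $\log\cosh x\le |x|$ grows only linearly, so finiteness also follows directly.
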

	\begin{proof}
		Fix $\beta=1$ and $k \le N$ and
		consider $\mu_N^{(k)}$. The usual
		Hubbard-Stratonovich transformation gives for $\sigma_{1:k}\in\{\pm1\}^k$,
		\begin{align*}
			\mu_N^{(k)}(\sigma_{1:k})
			&=
			\sum_{\sigma_{k+1:N}}\mu_N(\sigma) \\
			&=
			\frac{1}{Z_N}\sum_{\sigma_{k+1:N}}
			\Big(\frac{N}{2\pi}\Big)^{M/2}\int_{\R^M}
			\exp\!\Big(-\frac{N}{2}\|u\|^2 + \sum_{i=1}^N \sigma_i\,u\cdot\xi_i\Big)\,du \\
			&=
			\frac{1}{Z_N}\Big(\frac{N}{2\pi}\Big)^{M/2}\int_{\R^M}
			\exp\!\Big(-\frac{N}{2}\|u\|^2 + \sum_{i=1}^k \sigma_i\,u\cdot\xi_i\Big)
			\prod_{i=k+1}^N 2\cosh(u\cdot\xi_i)\,du.
		\end{align*}
		
		Dividing by by $\pi^{\otimes k}(\sigma_{1:k})=2^{-k}$ we obtain the likelihood ratio
		\begin{align*}
			L_N(\sigma_{1:k})
			&=\frac{d\mu_N^{(k)}}{d\pi^{\otimes k}}(\sigma_{1:k})\\
			&=
			\frac{1}{\widetilde Z_{N,1}}
			\int_{\R^M}
			\exp\!\Big(-\frac{N}{2}\|u\|^2\Big)
			\prod_{i=1}^N \cosh(u\cdot\xi_i)\,
			\exp\!\Big(\sum_{i=1}^k (\sigma_i\,u\cdot\xi_i-\log\cosh(u\cdot\xi_i))\Big)\,du \\
			&=
			\E_{u\sim \widetilde Q_{N,1}}
			\exp\Big(\sum_{i=1}^k\big(\sigma_i\,u\cdot\xi_i-\log\cosh(u\cdot\xi_i)\big)\Big),
		\end{align*}
		where $\widetilde Q_{N,1}$ is the probability measure from \eqref{eq:tildeQN}.
	\end{proof}

	From here we conclude:
	\begin{corollary}
		\label{cor:second_moment_exact_beta1}
		Let $u,u'$ be i.i.d.\ with law $\widetilde Q_{N,1}$. Then
		\[
		\E_{\pi^{\otimes k}}[L_N^2]
		=
		\E_{u,u'\sim \widetilde Q_{N,1}}
		\prod_{i=1}^k
		\frac{\cosh((u+u')\cdot\xi_i)}{\cosh(u\cdot\xi_i)\cosh(u'\cdot\xi_i)}.
		\]
	\end{corollary}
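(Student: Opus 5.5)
The plan is to compute $\E_{\pi^{\otimes k}}[L_N^2]$ directly from the representation in Lemma~\ref{lem:HS_likelihood_beta1}, by doubling the mixing variable. Let $u,u'$ be independent with law $\widetilde Q_{N,1}$. Then
\[
L_N(\sigma_{1:k})^2=\E_{u,u'}\exp\Big(\sum_{i=1}^k\big(\sigma_i(u+u')\cdot\xi_i-\log\cosh(u\cdot\xi_i)-\log\cosh(u'\cdot\xi_i)\big)\Big),
\]
because the square of an expectation equals the expectation over two independent copies.

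Next I take $\E_{\pi^{\otimes k}}$ and exchange it with $\E_{u,u'}$ by Fubini. All integrands are nonnegative, and $\{-1,+1\}^k$ is finite, so Tonelli applies without any integrability concerns.

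For fixed $u,u'$, the $\sigma$-dependence factorizes over $i$. Under $\pi^{\otimes k}$ the $\sigma_i$ are independent Rademacher variables, so
\[
\E_{\pi^{\otimes k}}\exp\Big(\sum_{i=1}^k\sigma_i h_i\Big)=\prod_{i=1}^k\cosh(h_i),\qquad h_i=(u+u')\cdot\xi_i.
\]
The remaining factors $\exp(-\log\cosh(u\cdot\xi_i)-\log\cosh(u'\cdot\xi_i))$ do not depend on $\sigma$. Multiplying them in gives exactly the product $\prod_{i=1}^k\frac{\cosh((u+u')\cdot\xi_i)}{\cosh(u\cdot\xi_i)\cosh(u'\cdot\xi_i)}$ inside $\E_{u,u'}$, which is the claimed identity.

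There is no real obstacle here. The only points to check are that Tonelli justifies the interchange (everything is positive) and that the identity holds pointwise in the disorder $\xi$. No probabilistic estimate is needed.
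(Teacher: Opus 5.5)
Your proof is correct and is the paper's argument: you write $L_N^2$ as an expectation over two independent copies $u,u'\sim\widetilde Q_{N,1}$, pass $\E_{\pi^{\otimes k}}$ inside, and use $\E_\pi[e^{\sigma t}]=\cosh t$ coordinatewise. Your explicit appeal to Tonelli, justified by nonnegative integrands and a finite $\sigma$-space, is a small addition that the paper leaves implicit.
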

	
	\begin{proof}
		By Lemma~\ref{lem:HS_likelihood_beta1},
		$
		L_N(\sigma_{1:k})
		=
		\E_{u\sim \widetilde Q_{N,1}}
		\exp\Big(\sum_{i=1}^k(\sigma_i\,u\cdot\xi_i-\log\cosh(u\cdot\xi_i))\Big).
		$
		Hence, writing $u,u'$ for i.i.d.\ samples from $\widetilde Q_{N,1}$,
		\[
		L_N(\sigma_{1:k})^2
		=
		\E_{u,u'}\exp\Big(\sum_{i=1}^k\big(\sigma_i (u+u')\cdot\xi_i-\log\cosh(u\cdot\xi_i)-\log\cosh(u'\cdot\xi_i)\big)\Big).
		\]
		Taking expectation over $\sigma\sim\pi^{\otimes k}$ and using
		$\E_{\pi}[e^{\sigma t}]=\cosh(t)$ yields
		\[
		\E_{\pi^{\otimes k}}[L_N^2]
		=
		\E_{u,u'}
		\prod_{i=1}^k
		\frac{\cosh((u+u')\cdot\xi_i)}{\cosh(u\cdot\xi_i)\cosh(u'\cdot\xi_i)} ,
		\]
		which is the desired identity.
	\end{proof}
	
	\medskip
	
	New will next need a Lemma which a direct consequence of the result and techniques in \cite{gentzloewe}.
	\begin{lemma}
		\label{lem:HS_quartic}
		There exist constants $a,C<\infty$ and a disorder event $\mathcal H_N$
		with $\mathbb P_\xi(\mathcal H_N)\to 1$ such that
		\[
		\E_{u\sim \widetilde Q_{N,1}}\Big[\exp\big(a\|N^{1/4}u\|^4\big)\mathbf 1_{\mathcal H_N}\Big]\le C
		\qquad\text{for all }N.
		\]
	\end{lemma}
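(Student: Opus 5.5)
The plan is to pass to the critical scale $v:=N^{1/4}u$ and show that, on a suitable disorder event, the density of $v$ under $\widetilde Q_{N,1}$ is dominated by a density of the form $\exp(\text{quadratic}-c\|v\|^4)$ with a deterministic $c>0$. Then $\exp(a\|v\|^4)$ is integrable for any $a<c$, and the remaining work is to bound the ratio of the two integrals. One step — the random quadratic term coming from the off-diagonal entries of $\widehat\Sigma_N$ — may fail to give a constant uniform in $N$ as the lemma is worded; this is flagged below.

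\emph{Step 1: expansion of the exponent.} In the variable $v$, the density of $\widetilde Q_{N,1}$ is proportional to $\exp(F_N(v))$, where
\[
F_N(v)=-\tfrac{\sqrt N}{2}\|v\|^2+\sum_{i=1}^N\log\cosh\big(N^{-1/4}v\cdot\xi_i\big).
\]
I will use the elementary two-sided bounds $t^2/2-t^4/12\le\log\cosh t\le t^2/2-t^4/12+t^6/45$, valid for all $t$, and $\log\cosh t\le |t|$ for large $|t|$. Since $(\xi_i^\nu)^2=1$, the diagonal of $\widehat\Sigma_N$ equals $I_M$. Writing $W_N:=\sqrt N(\widehat\Sigma_N-I_M)$, which has zero diagonal, this gives
\[
F_N(v)\le \tfrac12 v^\top W_N v-\tfrac{1}{12}\,\tfrac1N\sum_{i}(v\cdot\xi_i)^4+\tfrac{1}{45}\,N^{-3/2}\sum_i (v\cdot\xi_i)^6 ,
\]
together with the matching lower bound without the sixth-order term.

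\emph{Step 2: the disorder event.} Since $M$ is fixed, the uniform law of large numbers on the unit sphere of $\R^M$ (finitely many monomials) gives $\tfrac1N\sum_i(w\cdot\xi_i)^4\to\E(w\cdot\xi_1)^4=3-2\sum_\nu w_\nu^4\ge1$ uniformly in $\|w\|=1$. Hence, with probability tending to one, $\tfrac1N\sum_i(v\cdot\xi_i)^4\ge\tfrac12\|v\|^4$. Moreover $|v\cdot\xi_i|\le\sqrt M\|v\|$ deterministically, so the sixth-order term is at most $CN^{-1/2}\|v\|^6$. Therefore it is dominated by the quartic term on $\{\|v\|\le\delta N^{1/4}\}$. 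Outside this ball, i.e.\ for $\|u\|\ge\delta$, I will use the crude bound $\log\cosh t\le t^2/2$ together with $\|\widehat\Sigma_N-I\|_{\op}\to0$ and a large-deviation estimate. This should show that this region has $\widetilde Q_{N,1}$-mass $e^{-cN}$, which kills the factor $e^{a\|v\|^4}\le e^{aN\|u\|^4}$ only if $a$ is small relative to the decay; for that I expect to need the full LDP-type lower bound on $\sum_i\log\cosh$ at $\beta=1$ in the spirit of \cite{gentzloewe}. The event $\mathcal H_N$ will collect these properties together with a bound $\|W_N\|_{\op}\le K_N$.

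\emph{Step 3: the ratio and the main obstacle.} On $\mathcal H_N$, I will bound
\[
\E[e^{a\|v\|^4}]\le\frac{\int e^{\frac12 v^\top W_Nv-(c-a)\|v\|^4}dv}{\int e^{\frac12 v^\top W_Nv-C'\|v\|^4}dv}.
\]
Both integrals are explicit functions of $W_N$, and the ratio is bounded by a constant depending only on $\|W_N\|_{\op}$, of order $\exp(C\|W_N\|_{\op}^2)$. This is the main obstacle. The matrix $W_N$ converges in law to a nondegenerate Gaussian matrix, so it is tight but not uniformly bounded. Consequently, an event with $\P_\xi(\mathcal H_N)\to1$ forces $K_N\to\infty$, and the resulting bound $\exp(CK_N^2)$ is not uniform in $N$.

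I would first try to exploit cancellation: $W_N$ has trace zero, so $v^\top W_Nv$ is indefinite. I would also try to absorb $W_N$ by an exact centering of the quadratic form, following the scaling argument of \cite{gentzloewe}, where the limit law of $N^{1/4}m_N$ is random precisely through this $W$. If neither works, the honest fallback is the bound $C(K)$ on $\{\|W_N\|_{\op}\le K\}$, an event of probability at least $1-\varepsilon(K)$. Since every later use of this lemma is ``in $\P_\xi$-probability'', that weaker form should suffice downstream. However, it does \emph{not} yield the single event with $\P_\xi(\mathcal H_N)\to1$ and constants $a,C$ independent of $N$ as the lemma is stated; I expect the stated uniform version may fail for exactly this reason.
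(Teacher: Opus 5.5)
Your conclusion that the lemma cannot be proved as written is right. The matrix $W_N=\sqrt N(\widehat\Sigma_N-I_M)$ is exactly what the paper's proof overlooks. The paper imports from Gentz--L\"owe a bound $N\Phi_N(x/N^{1/4})\ge c_1\|x\|^4-c_2\|x\|^2$ for all $x\in\R^M$, with deterministic $c_1,c_2$, on an event of probability tending to one. Since $\log\cosh t\ge t^2/2-t^4/12$ and $|x\cdot\xi_i|\le\sqrt M\|x\|$, one has $N\Phi_N(x/N^{1/4})\le-\frac12x^\top W_Nx+\frac{M^2}{12}\|x\|^4$. Testing at $x=\epsilon e$, with $e$ a top eigenvector of $W_N$, and letting $\epsilon\to0$ forces $\lambda_{\max}(W_N)\le 2c_2$. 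For $M\ge2$ this event has probability tending to $\P(\lambda_{\max}(W)\le 2c_2)<1$: $W_N$ converges in law to a zero-diagonal symmetric matrix $W$ with i.i.d.\ standard Gaussian off-diagonal entries, and $\lambda_{\max}(W)\ge|W_{12}|$. Your worry is also not an artefact of the method. Use $\log\cosh t\le t^2/2$ on $\{\|v\|<r\}$ and the lower bound above on a unit tube around $se$ with $s^2=3\lambda/M^2$. This gives $\widetilde Q_{N,1}(\|V\|<\sqrt{\eta\lambda})\le\frac12$ for every $\eta<3/(2M^2)$, as soon as $\lambda=\lambda_{\max}(W_N)$ is large and $\|W_N\|_{\op}\le2\lambda$. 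So even the quartic exponential moment truncated to $\{\|u\|\le\delta\}$ is at least of order $e^{a\eta^2\lambda^2}$, and it exceeds any fixed $C$ with probability bounded away from zero. No centering trick removes this.

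The step of your plan that would fail outright, however, is Step 2. It exposes a more elementary defect that holds for every $M$, including $M=1$ where $W_N\equiv0$. The $e^{-cN}$ mass of $\{\|u\|\ge\delta\}$ cannot absorb $e^{a\|v\|^4}=e^{aN\|u\|^4}$ for any $a>0$, with or without LDP input. Since $\cosh\ge1$, the density of $\widetilde Q_{N,1}$ is at least $\widetilde Z_{N,1}^{-1}e^{-N\|u\|^2/2}$, so $\E_{\widetilde Q_{N,1}}[e^{aN\|u\|^4}]=+\infty$ for every $N$, every realization of $\xi$ and every $a>0$. Correspondingly, the imported bound already fails once $\|x\|^2>(c_2+\sqrt N/2)/c_1$, because $N\Phi_N(x/N^{1/4})\le\frac{\sqrt N}{2}\|x\|^2$. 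Your fallback therefore has to change the observable, not just the constants.

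What is true, and what the later uniform-integrability arguments can legitimately use, is a quadratic exponential bound. Fix $c$ and $K$, and work on $\{\|W_N\|_{\op}\le K\}$ intersected with your uniform law of large numbers for $\frac1N\sum_i(w\cdot\xi_i)^4$ and with $\{\inf_{\delta\le\|u\|\le R}\Phi_N(u)\ge c(\delta)>0\}$. There one has $\E_{\widetilde Q_{N,1}}[e^{c\|N^{1/4}u\|^2}]\le C(c,K)$ for $N\ge N_0(c)$. The outer region is now genuinely negligible, since $c\sqrt N\|u\|^2\ll N\Phi_N(u)$ there, and your Steps 1--3 restricted to $\|v\|\le\delta N^{1/4}$ supply the rest. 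This is enough for Theorem~\ref{thm:crit_poc}, provided one dominates $e^{\sum_i\Psi}$ by $e^{A_N}$ via $\Psi(x,y)\le xy$ rather than using the global quartic remainder bound. In Theorem~\ref{thm:crit_break_sqrtN}, however, the $L^p$ constant and hence $b$ then depend on $K$, so obtaining a single deterministic $b$ needs a further argument.
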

	
	\begin{proof}
		Let $x:=N^{1/4}u$. By definition of $\widetilde Q_{N,1}$,
		the law of $x$ under $\widetilde Q_{N,1}$ has a density (w.r.t.\ Lebesgue measure) of the form
		\begin{equation}\label{eq:HS_density_x}
			\frac{1}{\tilde Z_N}\exp\!\Big(-N\,\Phi_N(x/N^{1/4})\Big),
			\qquad \text{where }\quad
			\Phi_N(v):=\frac12\|v\|^2-\frac1N\sum_{i=1}^N \log\cosh(v\cdot\xi_i),
		\end{equation}
		and $\tilde Z_N$ is the normalizing constant. Note that $\Phi_N$ coincides with the random potential
		considered in Gentz--L\"owe (up to notational conventions), cf.\ their definition
		of $\Phi_{N,1}$ / $\Phi_N$ (e.g.\ Lemma 3.5 in \cite{gentzloewe}).
		
		Fix $\delta\in(0,1)$.
		By the inner/intermediate/outer region analysis in the proof of Theorem~2.1 in \cite{gentzloewe}
		(one combines the bounds displayed for the intermediate region, (3.19)),
		with the outer region estimate, e.g.\ (3.17)),
		there exist deterministic constants $c_1,c_2>0$, a radius $R<\infty$, and a disorder event
		$\mathcal H_N$ with $\P_\xi(\mathcal H_N)\to1$ such that on $\mathcal H_N$,
		\begin{equation}\label{eq:quartic_lower_bound_phi}
			N\,\Phi_N(x/N^{1/4})
			\;\ge\;
			c_1\|x\|^4 - c_2\|x\|^2
			\qquad\text{for all }x\in\R^M.
		\end{equation}
		
		Let $a\in(0,c_1)$. Using \eqref{eq:HS_density_x}--\eqref{eq:quartic_lower_bound_phi} we get, on $\mathcal H_N$,
		\begin{align*}
			\E_{\widetilde Q_{N,1}}\!\left[\exp\!\big(a\|N^{1/4}u\|^4\big)\right]
			&=
			\frac{\int_{\R^M}\exp\!\big(a\|x\|^4\big)\exp\!\big(-N\Phi_N(x/N^{1/4})\big)\,dx}
			{\int_{\R^M}\exp\!\big(-N\Phi_N(x/N^{1/4})\big)\,dx} \\
			&\le
			\frac{\int_{\R^M}\exp\!\big(-(c_1-a)\|x\|^4+c_2\|x\|^2\big)\,dx}
			{\int_{B(0,R)}\exp\!\big(-N\Phi_N(x/N^{1/4})\big)\,dx}.
		\end{align*}
		The numerator is finite since $c_1-a>0$. For the denominator, $\Phi_N(0)=0$ and $\Phi_N$ is continuous,
		hence $\int_{B(0,R)}\exp(-N\Phi_N(x/N^{1/4}))\,dx \ge \mathrm{Leb}(B(0,R/2))\,e^{-C}$ for all large $N$ on $\mathcal H_N$
		(with some deterministic $C<\infty$), so the denominator is bounded below by a positive deterministic constant.
		Therefore,
		\[
		\sup_N \E_{\widetilde Q_{N,1}}\!\left[\exp\!\big(a\|N^{1/4}u\|^4\big)\mathbf 1_{\mathcal H_N}\right]
		\;<\;\infty,
		\]
		which is the claim.
	\end{proof}

	\begin{proof}[Proof of Theorem~\ref{thm:crit_poc}]
		Let
		\[
		L_N(\sigma_{1:k})
		:=\frac{d\mu_N^{(k)}}{d\pi^{\otimes k}}(\sigma_{1:k}).
		\]
		Then, by the Cauchy--Schwarz inequality
		\[
		d_{\mathrm{TV}}(\mu_N^{(k)},\pi^{\otimes k})
		=\frac12\,\E_{\pi^{\otimes k}}\big[|L_N-1|\big]
		\le \frac12\,\Big(\E_{\pi^{\otimes k}}\big[(L_N-1)^2\big]\Big)^{1/2}
		=\frac12\,\Big(\E_{\pi^{\otimes k}}[L_N^2]-1\Big)^{1/2}.
		\]
		Thus it suffices to show that
		\begin{equation}\label{eq:chi2_to_0_beta1}
			\E_{\pi^{\otimes k}}[L_N^2]\longrightarrow 1
			\qquad\text{in }\mathbb P_\xi\text{-probability}.
		\end{equation}
		
		We now use the Hubbard Stratonovich representation from Lemma~\ref{lem:HS_likelihood_beta1}
		and the exact second-moment identity from Corollary~\ref{cor:second_moment_exact_beta1}.
		Let $u,u'$ be i.i.d.\ with law $\widetilde Q_{N,1}$. Then
		\[
		\E_{\pi^{\otimes k}}[L_N^2]
		=
		\E_{u,u'\sim \widetilde Q_{N,1}}
		\exp\Bigg(\sum_{i=1}^k \Psi(u\cdot\xi_i,u'\cdot\xi_i)\Bigg)
		\]
		with
		\[
		\Psi(x,y)=\log\frac{\cosh(x+y)}{\cosh x\,\cosh y}.
		\]
		A Taylor expansion yields of $\Psi(x,y)$ aroung $(0,0)$ yields
		\begin{equation} \label{eq:Psi_Taylor}
			\Psi(x,y)=xy+R(x,y),
			\qquad
			|R(x,y)|\le C\big(x^2y^2+x^4+y^4\big),
		\end{equation}
		with a universal $C<\infty$.
		Indeed, using
		$\cosh(x+y)=\cosh(x) \cosh(y)+\sinh(x)\sinh(y)$ we obtain 
		$
		\Psi(x,y)=\log(1+\tanh(x)\tanh(y)).$ Using $\tanh(z)=z+\frac{z^3}3+O(z^5)$, we thus obtain
		$$
		\frac{\cosh(x+y)}{\cosh x\,\cosh y}=1+xy+\frac 13(x^3y+y^3x)+O(x^5,y^5).
		$$
		Using $|x|^3|y| = |x|^2 \cdot |x||y| \le |x|^2 \cdot \left(\frac{|x|^2}{2} + \frac{|y|^2}{2}\right) = \frac{|x|^4}{2} + \frac{|x|^2|y|^2}{2}$ (by arithmetic-geometric-mean) and the Taylor expansion of $\log$ gives \eqref{eq:Psi_Taylor}.

		Define
		\[
		A_N(u,u'):=\sum_{i=1}^k (u\cdot\xi_i)(u'\cdot\xi_i),
		\qquad \text{and}\quad
		B_N(u,u'):=\sum_{i=1}^k R(u\cdot\xi_i,u'\cdot\xi_i).
		\]
		Then
		\[
		\E_{\pi^{\otimes k}}[L_N^2]
		=
		\E_{u,u'\sim \widetilde Q_{N,1}}
		\exp\big(A_N(u,u')+B_N(u,u')\big).
		\]
		
		Define 
		the disorder event 
		$$\mathcal G_N:=\{\|\widehat\Sigma_k-I_M\|_{\op}\le\varepsilon_N\}$$
		with $\varepsilon_N \to 0$ sufficiently slowly
		(then $\mathbb P_\xi(\mathcal G_N)\to1$ since $M$ is fixed and $k\to\infty$).
		We work on the disorder event $\mathcal H_N$ from Lemma~\ref{lem:HS_quartic}
		intersected with $\mathcal G_N$,
		so that $$\P_\xi(\mathcal H_N\cap \mathcal G_N)\to1.$$

		By the Taylor bound \eqref{eq:Psi_Taylor}, on the disorder event $\mathcal H_N$,
		\[
		|B_N(u,u')|
		\le
		C\sum_{i=1}^k
		\Big(
		(u\cdot\xi_i)^2(u'\cdot\xi_i)^2
		+(u\cdot\xi_i)^4
		+(u'\cdot\xi_i)^4
		\Big).
		\]
		Since $\|\xi_i\|^2=M$ and $M$ is fixed, we have the deterministic bounds
		\[
		(u\cdot\xi_i)^4\le M^2\|u\|^4,
		\qquad
		(u\cdot\xi_i)^2(u'\cdot\xi_i)^2\le M^2\|u\|^2\|u'\|^2.
		\]
		Hence
		\[
		|B_N(u,u')|
		\le
		CM^2 k\big(\|u\|^4+\|u'\|^4+\|u\|^2\|u'\|^2\big).
		\]
		
		Writing $u=N^{-1/4}V$ and $u'=N^{-1/4}V'$, we obtain
		\[
		|B_N(u,u')|
		\le
		CM^2\,\frac{k}{N}
		\big(\|V\|^4+\|V'\|^4+\|V\|^2\|V'\|^2\big).
		\]
		Hence using $k=o(\sqrt N)$ so that $k/N\to0$,
		together with the quartic integrability from Lemma~\ref{lem:HS_quartic}
		\begin{equation}\label{eq:BN_small_beta1}
			\E_{\widetilde Q_{N,1}^{\otimes2}}\big[|B_N(u,u')|\mathbf 1_{\mathcal H_N}\big]
			\;\le\;
			C\,M^2\,\frac{k}{N}\;
			\E\big[\|V\|^4+\|V'\|^4+\|V\|^2\|V'\|^2\big]
			\;=\;o(1).
		\end{equation}
		
		In particular, $B_N(u,u')\to 0$ in $\widetilde Q_{N,1}^{\otimes2}$-probability on $\mathcal H_N$.
		
		For the main term, write
		\[
		A_N(u,u')
		=
		u^\top\Big(\sum_{i=1}^k \xi_i\xi_i^\top\Big)u'.
		\]
		On $\mathcal G_N$ we have $\frac1k\sum_{i=1}^k\xi_i\xi_i^\top = I_M+o(1)$ in operator norm, hence
		\begin{equation}\label{eq:ANbeta1}
			A_N(u,u')
			=
			k\,u\cdot u' + o(1)\,k\|u\|\,\|u'\|.
		\end{equation}
		Recalling $u=N^{-1/4}V$ we obtain
		\[
		k\,u\cdot u' = \frac{k}{\sqrt N}\,V\cdot V'
		\qquad \text{and thus} \quad
		k\|u\|\,\|u'\| = \frac{k}{\sqrt N}\,\|V\|\,\|V'\|.
		\]
		Since $k/\sqrt N\to 0$, Lemma~\ref{lem:HS_quartic} implies that both quantities converge to $0$
		in $\widetilde Q_{N,1}^{\otimes2}$-probability on $\mathcal H_N$.
		Indeed, $$c\|V\|^2\le \delta\|V\|^4 + c^2/(4\delta),$$ hence
		$\exp(c\|V\|^2)\le e^{c^2/(4\delta)}\exp(\delta\|V\|^4)$ and apply Lemma~\ref{lem:HS_quartic}.

		Therefore,
		\begin{equation}\label{eq:AN_small_beta1}
			A_N(u,u')\xrightarrow[N\to\infty]{} 0
			\quad\text{in }\widetilde Q_{N,1}^{\otimes2}\text{-probability on }\mathcal H_N.
		\end{equation}
		
		Finally, we show that the exponential may be passed through the limit.
		By Cauchy--Schwarz and $|V\cdot V'|\le \frac12(\|V\|^2+\|V'\|^2)$,
		\[
		\exp\big(|k\,u\cdot u'|\big)
		=
		\exp\!\Big(\frac{k}{\sqrt N}|V\cdot V'|\Big)
		\le
		\exp\!\Big(\frac{k}{2\sqrt N}\|V\|^2\Big)\,
		\exp\!\Big(\frac{k}{2\sqrt N}\|V'\|^2\Big).
		\]
		As $k/\sqrt N\to0$, for all $N$ large enough we have $k/\sqrt N\le 1$, hence
		$$\exp(\frac{k}{2\sqrt N}\|V\|^2)\le \exp(\frac12\|V\|^2).$$
		
		Lemma~\ref{lem:HS_quartic} implies $\sup_N \E[\exp(c\|V\|^2)\mathbf 1_{\mathcal H_N}]<\infty$
		for every fixed $c<\infty$, so the family
		$\exp(A_N(u,u')+B_N(u,u'))$ is uniformly integrable on $\mathcal H_N$.
		Combining \eqref{eq:AN_small_beta1} and \eqref{eq:BN_small_beta1} (note that still $k/N \to 0$), we conclude that
		\[
		\E_{u,u'\sim \widetilde Q_{N,1}}\big[\exp(A_N(u,u')+B_N(u,u'))\mathbf 1_{\mathcal H_N}\big]
		\longrightarrow 1.
		\]
		Since $\P_\xi(\mathcal H_N)\to1$, this proves \eqref{eq:chi2_to_0_beta1}, and hence
		$d_{\mathrm{TV}}(\mu_N^{(k)},\pi^{\otimes k})\to0$ in $\P_\xi$-probability.
	\end{proof}
	
	\section{Proof of Theorem \ref{thm:crit_break_sqrtN}}
	
	Next we show that propagation of chaos starts to fail for finite $M$ and $k=cN^{1/2}$ for $c>0$.

	\begin{proof}[Proof of Theorem~\ref{thm:crit_break_sqrtN}]
		We prove Theorem~\ref{thm:crit_break_sqrtN} in several steps.
		
		We start with a familiar first step.
		As for above define the density of the marginal distribution with respect to the Rademacher product measure:  
		\[
		L_N(\sigma_{1:k}):=\frac{d\mu_N^{(k)}}{d\pi^{\otimes k}}(\sigma_{1:k}).
		\]
		Recall that 
		\[
		d_{\mathrm{TV}}(\mu_N^{(k)},\pi^{\otimes k})
		=\frac12\,\E_{\pi^{\otimes k}}\big[|L_N-1|\big].
		\]
		Hence it suffices to show that $L_N$ does not converge to $1$ in
		$\pi^{\otimes k}$-probability, uniformly on a high-probability disorder event (note that, of course, $\E_{\pi^{\otimes k}}(L_N)=1$ for all disorder events).
		A convenient sufficient way is to show give a lower bound und $\E[L^2]$ and an upper bound 
		on some $p'th$ moment of $|L_N-1|$ which we will see in the next Lemma.

		\begin{lemma}
			\label{lem:chi2_to_tv_Lp}
			Let $(\Omega,\mathcal F,\pi)$ be a probability space and let $L\ge 0$ with $\E_\pi[L]=1$.
			Fix $\eta>0$ and set $p:=2+\eta$. Assume that
			\[
			\E_\pi[L^2]\ge 1+\kappa
			\qquad\text{and}\qquad
			\E_\pi\big[|L-1|^{p}\big]\le C_p
			\]
			for some $\kappa>0$ and $C_p<\infty$. Then
			\[
			d_{\mathrm{TV}}(L\pi,\pi)=\frac12\,\E_\pi|L-1|
			\ \ge\ 
			\frac12 \kappa^{(p-1)/(p-2)} C_p^{-1/(p-2)}
			=:
			b(\kappa,C_p,\eta)\ >0.
			\]
		\end{lemma}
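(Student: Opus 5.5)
Set $X:=L-1$, so that $\E_\pi[X]=0$, $\E_\pi[X^2]=\E_\pi[L^2]-1\ge\kappa$ and $\E_\pi|X|^p\le C_p$. The plan is to interpolate the second moment between the first and the $p$-th absolute moments of $X$ using H\"older's inequality (log-convexity of $r\mapsto \log\E|X|^r$). Since $1<2<p$, write $2=\theta\cdot 1+(1-\theta)p$, which gives
\[
\theta=\frac{p-2}{p-1},\qquad 1-\theta=\frac{1}{p-1}.
\]
Writing $|X|^2=|X|^{\theta}\,|X|^{(1-\theta)p}$ and applying H\"older with exponents $1/\theta$ and $1/(1-\theta)$ yields
\[
\E_\pi[X^2]\le \big(\E_\pi|X|\big)^{\frac{p-2}{p-1}}\big(\E_\pi|X|^p\big)^{\frac{1}{p-1}}.
\]

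Next insert the two hypotheses. This gives $\kappa\le (\E_\pi|X|)^{(p-2)/(p-1)}C_p^{1/(p-1)}$. Since $\eta>0$ we have $p>2$, so we may raise both sides to the power $(p-1)/(p-2)$ and obtain
\[
\E_\pi|L-1|\ \ge\ \kappa^{\frac{p-1}{p-2}}\,C_p^{-\frac{1}{p-2}}.
\]
Multiplying by $\tfrac12$ and using the identity $d_{\mathrm{TV}}(L\pi,\pi)=\tfrac12\E_\pi|L-1|$, which holds on any space since $L$ is the density of $L\pi$ with respect to $\pi$, gives exactly $b(\kappa,C_p,\eta)$. The bound is positive because $\kappa>0$ and $C_p<\infty$.

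One degenerate case needs a remark. Note that $C_p>0$ necessarily, since $\E X^2\ge\kappa>0$, so the negative power of $C_p$ is well-defined. Beyond that no real obstacle is expected; the only points needing care are getting the interpolation exponent $\theta$ right and checking that the direction of the inequality survives the final exponentiation, which it does because $(p-1)/(p-2)>0$.
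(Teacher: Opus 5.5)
Your proof is correct and is essentially the paper's argument: the paper writes the same H\"older interpolation in norm form, $\|X\|_2\le\|X\|_1^{\alpha}\|X\|_p^{1-\alpha}$ with $\alpha=\frac{p-2}{2(p-1)}$. This is exactly your moment inequality after squaring, since $2\alpha=\theta$, and the paper then inserts the two hypotheses and solves for $\|X\|_1$ just as you do.
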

		
		\begin{proof}
			Let $X:=|L-1|$. 
			Define, $\alpha=\frac{p-2}{2(p-1)}$.
			Then, by a generalized H\"older inequality, $$\|X\|_2 \le \|X\|_1^{\alpha}\|X\|_p^{1-\alpha},$$ hence
			\[
			\|X\|_1 \ge \|X\|_2^{1/\alpha}\,\|X\|_p^{-(1-\alpha)/\alpha}.
			\]
			Note that $\|X\|_2^2=\E[(L-1)^2]=\E[L^2]-1\ge\kappa$.
			Moreover, $\|X\|_p \le C_p^{1/p}$ since $\|X\|_p^p=\E[X^p]\le C_p$.
			With $\alpha = \frac{p-2}{2(p-1)}$ we have $\frac{1}{2\alpha}=\frac{p-1}{p-2}$ and
			$\frac{1-\alpha}{\alpha p}=\frac{1}{p-2}$.
			So, we obtain
			\[
			\|X\|_1 \ge \kappa^{(p-1)/(p-2)}\, C_p^{-1/(p-2)}.
			\]
		\end{proof}

		By Lemma~\ref{lem:chi2_to_tv_Lp}, it therefore suffices to show that there exist
		constants $\kappa>0$, $\eta>0$ and $C_p<\infty$ (with $p=2+\eta$) such that,
		on a disorder event $\mathcal H_N$ with $\mathbb P_\xi(\mathcal H_N)\to 1$,
		\begin{align}
			\text{on }\mathcal H_N:\quad \E_{\pi^{\otimes k}}[L_N^2]\ge 1+\kappa,
			\label{eq:chi2_gap_goal}\\
			\text{and on } \mathcal H_N:\quad \E_{\pi^{\otimes k}}[|L_N-1|^p]\le C_p.
			\label{eq:Lp_goal}
		\end{align}
		We will prove \eqref{eq:chi2_gap_goal} and \eqref{eq:Lp_goal}below.

		\medskip
		
		We start with \eqref{eq:chi2_gap_goal}.
		
		Recall from Corollary~\ref{cor:second_moment_exact_beta1} that
		\[
		\E_{\pi^{\otimes k}}[L_N^2]
		=
		\E_{u,u'\sim \widetilde Q_{N,1}}
		\exp\Bigg(
		\sum_{i=1}^k
		\Psi\big(u\cdot\xi_i,\;u'\cdot\xi_i\big)
		\Bigg),
		\]
		where
		\[
		\Psi(x,y)
		:=
		\log\frac{\cosh(x+y)}{\cosh x\,\cosh y}.
		\]
		
		By \eqref{eq:Psi_Taylor}
		we can again rewrite
		\[
		\sum_{i=1}^k \Psi(u\cdot\xi_i,u'\cdot\xi_i)
		=
		\underbrace{\sum_{i=1}^k (u\cdot\xi_i)(u'\cdot\xi_i)}_{=:A_N(u,u')}
		+
		\underbrace{\sum_{i=1}^k R(u\cdot\xi_i,u'\cdot\xi_i)}_{=:B_N(u,u')}.
		\]
		
		As in \eqref{eq:BN_small_beta1} (again $k/N \to 0$)
		\[
		\E_{\widetilde Q_{N,1}^{\otimes2}}
		\big[|B_N(u,u')|\,\mathbf 1_{\mathcal H_N}\big]\xrightarrow[N\to\infty]{}0.
		\]
		and hence by Markov' inequality
		$
		B_N(u,u')\xrightarrow[N\to\infty]{}0
		\;\text{in } \widetilde Q_{N,1}^{\otimes2}\text{-probability on }\mathcal H_N.
		$
		
		\medskip
		For  $A_N(u,u')$ again write
		$
		A_N(u,u')
		=
		u^\top\Big(\sum_{i=1}^k \xi_i\xi_i^\top\Big)u'
		$
		and use that
		on a disorder event of probability tending to one,
		$
		\frac{1}{k}\sum_{i=1}^k \xi_i\xi_i^\top
		=
		I_M+o(1).
		$

		On the event $\{\|\widehat\Sigma_k-I_M\|_{\op}=o(1)\}$ we have as in \eqref{eq:ANbeta1}
		\[
		A_N(u,u')=k\,u\cdot u' + o(1)\,k\|u\|\,\|u'\|.
		\]
		Since $k\|u\|\,\|u'\| = (k/\sqrt N)\,\|V\|\,\|V'\|$ and $k/\sqrt N=O(1)$,
		Lemma~\ref{lem:HS_quartic} implies $k\|u\|\,\|u'\|=O_{\widetilde Q^{\otimes2}}(1)$,
		hence the error is $o_{\widetilde Q^{\otimes2}}(1)$.
		\medskip
		
		%
		From the previous step we have, on a disorder event of probability
		tending to one,
		\[
		\E_{\pi^{\otimes k}}[L_N^2]
		=
		\E_{u,u'\sim \widetilde Q_{N,1}}
		\exp\!\big(k\,u\cdot u' + r_N(u,u')\big)
		+ o(1),
		\]
		where $r_N(u,u')\to0$ in
		$\widetilde Q_{N,1}^{\otimes2}$-probability and uniformly on sets
		$\{\|u\|,\|u'\|\le C N^{-1/4}\}$.
		Writing $u=N^{-1/4}V$ and $u'=N^{-1/4}V'$, we have
		$k\,u\cdot u' = kN^{-1/2}V\cdot V' \to c\,V\cdot V'$.
		To justify uniform integrability, note that
		\[
		|V\cdot V'|
		\le \frac12\big(\|V\|^2+\|V'\|^2\big).
		\]
		For every $\delta>0$ we further have
		\[
		\|V\|^2
		\le \delta \|V\|^4 + \frac{1}{4\delta},
		\]
		hence
		\[
		|V\cdot V'|
		\le
		\delta\big(\|V\|^4+\|V'\|^4\big)
		+\frac{1}{2\delta}.
		\]
		Therefore, for $c\ge0$,
		\[
		\exp\big(c\,|V\cdot V'|\big)
		\le
		\exp\!\Big(\frac{c}{2\delta}\Big)
		\exp\!\big(c\delta \|V\|^4\big)
		\exp\!\big(c\delta \|V'\|^4\big).
		\]
		Choosing $\delta>0$ small enough and using Lemma~\ref{lem:HS_quartic},
		the right-hand side has uniformly bounded expectation.
		Thus the family $\exp(c\,V\cdot V')$ is uniformly integrable.
		
		Hence
		\[
		\E_{\pi^{\otimes k}}[L_N^2]
		=
		\E_{u,u'\sim \widetilde Q_{N,1}}
		\exp\!\big(k\,u\cdot u'\big)
		+ o(1),
		\qquad
		\text{in }\mathbb P_\xi\text{-probability}.
		\]

		Since $k/\sqrt N\to c>0$ and the limiting law of $V$ is non-degenerate
		(see \cite{gentzloewe}),
		the random variable $V\cdot V'$ is not almost surely equal to $0$.
		Hence, for every $c>0$,
		\[
		\E\big[\exp(c\,V\cdot V')\big] > 1.
		\]
		Consequently,
		\[
		\liminf_{N\to\infty}\E_{\pi^{\otimes k}}[L_N^2] > 1.
		\]
		
		This proves \eqref{eq:chi2_gap_goal}.
		
		\medskip
		
		We will next show \eqref{eq:Lp_goal} i.e.
		$$	\E_{\pi^{\otimes k}}\big[|L_N-1|^{p}\big]\mathbf 1_{\mathcal H_N} \le C_p$$ with $p:=3$.
		Since by convexity of the p'th power $|x-1|^p \le 2^{p-1}(x^p+1)$ for $x\ge0$, it suffices to show that
		there exists a deterministic constant $C_p<\infty$ such that, on a disorder
		event $\mathcal H_N$ with $\mathbb P_\xi(\mathcal H_N)\to1$,
		\begin{equation}\label{eq:LN_p_bound_goal}
			\sup_N \E_{\pi^{\otimes k}}\!\big[L_N^p\,\mathbf 1_{\mathcal H_N}\big]\le C_p.
		\end{equation}
		
		\medskip
		
		\noindent
		Recall that 
		by Lemma~\ref{lem:HS_likelihood_beta1} 
		\[
		L_N(\sigma_{1:k})
		=
		\E_{u\sim\widetilde Q_{N,1}}
		\exp\Big(\sum_{i=1}^k(\sigma_i u\cdot\xi_i-\log\cosh(u\cdot\xi_i))\Big).
		\]
		Hence, for integer $p$,
		\[
		L_N(\sigma_{1:k})^p
		=
		\E_{u^{(1)},\dots,u^{(p)}\ \mathrm{i.i.d.}\sim\widetilde Q_{N,1}}
		\exp\Bigg(
		\sum_{i=1}^k\Big(
		\sigma_i \Big(\sum_{a=1}^p u^{(a)}\Big)\!\cdot\xi_i
		-
		\sum_{a=1}^p\log\cosh(u^{(a)}\cdot\xi_i)
		\Big)\Bigg)
		\]
		(which is the usual replica representation of moments).
		Taking expectation over $\sigma\sim\pi^{\otimes k}$ and using
		$\E_\pi[e^{\sigma t}]=\cosh(t)$ yields
		\begin{equation}\label{eq:Ep_LNp_exact}
			\E_{\pi^{\otimes k}}[L_N^p]
			=
			\E_{u^{(1)},\dots,u^{(p)}}
			\exp\Bigg(
			\sum_{i=1}^k
			\Big[
			\log\cosh\Big(\Big(\sum_{a=1}^p u^{(a)}\Big)\!\cdot\xi_i\Big)
			-
			\sum_{a=1}^p\log\cosh(u^{(a)}\cdot\xi_i)
			\Big]
			\Bigg),
		\end{equation}
		where the outer expectation is with respect to $\widetilde Q_{N,1}^{\otimes p}$.
		
		\medskip
		
		\noindent
		To simplify notation
		define, for $x_1,\dots,x_p\in\mathbb R$,
		\[
		\Theta_p(x_1,\dots,x_p)
		:=
		\log\cosh\Big(\sum_{a=1}^p x_a\Big)-\sum_{a=1}^p\log\cosh(x_a).
		\]
		Since $\log\cosh$ is convex and even, one has the elementary bound
		\begin{equation}\label{eq:Theta_quad_bound}
			\Theta_p(x_1,\dots,x_p)
			\le
			\sum_{1\le a<b\le p} x_a x_b
			\le
			\frac{p-1}{2}\sum_{a=1}^p x_a^2,
		\end{equation}
		valid for all $(x_1,\dots,x_p)\in\mathbb R^p$.
		The first inequality follows from $(\log \cosh(x))''\le 1$ by induction over $p$ using
		$$
		f(x_1+x_2)-f(x_1)-f(x_2)+f(0)=\int_0^{x_1}\int_0^{x_2} f''(s+t)dt\le x_1 x_2
		$$
		for the base case.
		The second inequality is just $2x_ax_b\le x_a^2+x_b^2$.
		
		Applying \eqref{eq:Theta_quad_bound} with $x_a=u^{(a)}\cdot\xi_i$ gives
		\[
		\log\cosh\Big(\Big(\sum_{a=1}^p u^{(a)}\Big)\!\cdot\xi_i\Big)
		-
		\sum_{a=1}^p\log\cosh(u^{(a)}\cdot\xi_i)
		\le
		\frac{p-1}{2}\sum_{a=1}^p (u^{(a)}\cdot\xi_i)^2.
		\]
		Summing over $i\le k$ yields
		\begin{equation}\label{eq:sum_site_quad_bound}
			\sum_{i=1}^k
			\Big[
			\log\cosh\Big(\Big(\sum_{a=1}^p u^{(a)}\Big)\!\cdot\xi_i\Big)
			-
			\sum_{a=1}^p\log\cosh(u^{(a)}\cdot\xi_i)
			\Big]
			\le
			\frac{p-1}{2}\sum_{a=1}^p \sum_{i=1}^k (u^{(a)}\cdot\xi_i)^2.
		\end{equation}
		
		\medskip
		
		Again define 
		the disorder event $$\mathcal G_N:=\{\|\widehat\Sigma_k-I_M\|_{\op}\le\varepsilon\}$$
		(with $\mathbb P_\xi(\mathcal G_N)\to1$ since $M$ is fixed and $k\to\infty$).
		On $\mathcal G_N$ we have
		\[
		\sum_{i=1}^k (u^{(a)}\cdot\xi_i)^2
		=
		k\,(u^{(a)})^\top \widehat\Sigma_k\,u^{(a)}
		\le
		k(1+\varepsilon)\|u^{(a)}\|^2.
		\]
		Inserting this into \eqref{eq:sum_site_quad_bound} and then into \eqref{eq:Ep_LNp_exact},
		we obtain on $\mathcal G_N$:
		\[
		\E_{\pi^{\otimes k}}[L_N^p]
		\le
		\E_{\widetilde Q_{N,1}^{\otimes p}}
		\exp\Big(
		C\,k\sum_{a=1}^p \|u^{(a)}\|^2
		\Big),
		\]
		where $C=C(p,\varepsilon)$ is deterministic.
		
		Writing $u^{(a)}=N^{-1/4}V^{(a)}$, we have
		\[
		k\|u^{(a)}\|^2 = \frac{k}{\sqrt N}\,\|V^{(a)}\|^2.
		\]
		Since $k/\sqrt N\to c$, there exists a deterministic $C'<\infty$ such that
		$k/\sqrt N\le C'$ for all $N$ large, and hence
		\[
		\E_{\pi^{\otimes k}}[L_N^p]
		\le
		\E_{\widetilde Q_{N,1}^{\otimes p}}
		\exp\Big(
		C'\sum_{a=1}^p \|V^{(a)}\|^2
		\Big)
		=
		\prod_{a=1}^p
		\E_{\widetilde Q_{N,1}}
		\exp\big(C'\|V\|^2\big).
		\]
		
		By Lemma~\ref{lem:HS_quartic}, there exist $a>0$ and $C<\infty$ such that
		\[
		\sup_N \E_{\widetilde Q_{N,1}}\big[\exp(a\|V\|^4)\mathbf 1_{\mathcal H_N}\big]\le C.
		\]
		Applying the elementary inequality
		\[
		C' r^2 \le \delta r^4 + \frac{(C')^2}{4\delta},
		\qquad r\ge0,
		\]
		with $r:=\|V\|$
		and choosing $\delta\le a$, we obtain
		\[
		\exp(C'\|V\|^2)
		\le
		e^{(C')^2/(4\delta)}\,\exp(\delta\|V\|^4).
		\]
		Taking expectations yields
		\[
		\sup_N \E_{\widetilde Q_{N,1}}\big[\exp(C'\|V\|^2)\mathbf 1_{\mathcal H_N}\big]<\infty,
		\]
		which proves \eqref{eq:LN_p_bound_goal}.
		
		\medskip
		
		Now we are ready to combine the previous arguments: 
		
		Let $p=3$ be as above, and let $\mathcal H_N$ be the disorder event
		(on which Lemma~\ref{lem:HS_quartic} holds) intersected with the covariance
		concentration event $\mathcal G_N$. Then
		$\mathbb P_\xi(\mathcal H_N)\to 1$.
		
		We established that there exists $\kappa=\kappa(c,M)>0$ such that on $\mathcal H_N$,
		\[
		\E_{\pi^{\otimes k}}[L_N^2] \ge 1+\kappa
		\qquad\text{for all $N$ large enough.}
		\]
		We also saw that there exists $C_p<\infty$ such that on $\mathcal H_N$,
		\[
		\E_{\pi^{\otimes k}}\big[|L_N-1|^{p}\big] \le C_p
		\qquad\text{for all $N$ large enough.}
		\]
		Applying Lemma~\ref{lem:chi2_to_tv_Lp} with $L=L_N$ on $\mathcal H_N$ yields
		\[
		d_{\mathrm{TV}}(\mu_N^{(k)},\pi^{\otimes k})
		=
		\frac12\,\E_{\pi^{\otimes k}}|L_N-1|
		\ge
		b(\kappa,C_p,\eta)
		=: b(c,M) >0
		\qquad\text{on }\mathcal H_N,
		\]
		for all $N$ large. Since $\mathbb P_\xi(\mathcal H_N)\to1$, this proves
		\[
		\liminf_{N\to\infty} d_{\mathrm{TV}}(\mu_N^{(k)},\pi^{\otimes k}) \ge b(c,M)
		\qquad\text{in }\mathbb P_\xi\text{-probability},
		\]
		and completes the proof of Theorem~\ref{thm:crit_break_sqrtN}.
	\end{proof}
%
	

	\appendix
	
	\section{Exponential moments for Rademacher quadratic forms}
	\label{app:mgf_quad}
	
	In this appendix we prove an exponential-moment bound for centered quadratic forms
	in independent Rademacher variables. The argument is a standard Laplace-transform
	derivation from the Hanson--Wright inequality, included for completeness.
	
	\begin{lemma}[Hanson--Wright inequality for Rademacher variables]
		\label{lem:HW_rademacher}
		Let $\sigma=(\sigma_1,\dots,\sigma_k)$ have independent Rademacher coordinates and
		let $A$ be a real symmetric $k\times k$ matrix. Set
		\[
		X:=\sigma^\top A\sigma-\tr(A).
		\]
		There exist absolute constants $c_0,C_0\in(0,\infty)$ such that for all $t\ge 0$,
		\begin{equation}\label{eq:HW_tail}
			\mathbb P\big(|X|\ge t\big)
			\le
			2\exp\!\left(-c_0\min\Big\{\frac{t^2}{\|A\|_F^2},\frac{t}{\|A\|_{\op}}\Big\}\right).
		\end{equation}
	\end{lemma}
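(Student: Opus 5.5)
The plan is to prove the Hanson--Wright bound \eqref{eq:HW_tail} in two parts. We split $X$ into its diagonal and off-diagonal contributions, and then control the off-diagonal chaos through a moment generating function estimate obtained by decoupling and Gaussian comparison. Since $\sigma_i^2=1$, the diagonal part is deterministic: $\sum_i A_{ii}\sigma_i^2=\tr(A)$. Hence
\[
X=\sum_{i\neq j}A_{ij}\sigma_i\sigma_j ,
\]
so only the off-diagonal chaos remains, and it has mean zero. This is the simplification specific to Rademacher variables; for general subgaussian coordinates one would also need a Bernstein bound on the diagonal sum.

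For the off-diagonal part I will aim for the mgf bound
\[
\E e^{\lambda X}\le \exp\big(C\lambda^2\|A\|_F^2\big)\qquad\text{for } |\lambda|\le c/\|A\|_{\op}.
\]
\emph{Step 1 (decoupling).} By the standard decoupling inequality for convex functions (Vershynin, Thm.~6.1.1), $\E e^{\lambda X}\le \E e^{4\lambda\,\sigma^\top A\sigma'}$, where $\sigma'$ is an independent copy of $\sigma$. \emph{Step 2 (conditioning).} Conditioning on $\sigma'$, the variable $\sigma^\top(A\sigma')$ is a Rademacher sum, so its conditional mgf is at most $\exp(8\lambda^2\|A\sigma'\|^2)$. \emph{Step 3 (Gaussian comparison).} Write $\exp(8\lambda^2\|A\sigma'\|^2)$ as an expectation over a standard Gaussian vector $g$, namely $\E_g\exp(4\lambda\, g^\top A\sigma')$. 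Integrating out $\sigma'$ bounds this by $\E_g\exp(8\lambda^2\|Ag\|^2)$. \emph{Step 4 (Gaussian computation).} By rotation invariance this last expectation equals $\prod_j(1-16\lambda^2 s_j^2)^{-1/2}$, where $s_j$ are the singular values of $A$. For $16\lambda^2\|A\|_{\op}^2\le 1/2$ this is at most $\exp(C\lambda^2\sum_j s_j^2)=\exp(C\lambda^2\|A\|_F^2)$, giving the claimed mgf bound.

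To obtain the tail, apply Markov's inequality, $\P(X\ge t)\le \exp(-\lambda t+C\lambda^2\|A\|_F^2)$, and optimise over $|\lambda|\le c/\|A\|_{\op}$. The unconstrained optimum $\lambda=t/(2C\|A\|_F^2)$ gives $e^{-t^2/(4C\|A\|_F^2)}$ whenever it is admissible. Otherwise we take the boundary value $\lambda=c/\|A\|_{\op}$; in that case $C\lambda^2\|A\|_F^2\le \lambda t/2$, so the bound is $e^{-ct/(2\|A\|_{\op})}$. Applying the same argument to $-X$ (or to $-A$) and taking a union bound supplies the factor $2$ in \eqref{eq:HW_tail}.

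I expect the main obstacle to be the decoupling step in Step 1, since everything after it is routine Gaussian computation. I will simply cite it: under the convention that results from \cite{VershyninHDP} may be invoked, the whole lemma is in fact Theorem~6.2.1 there.
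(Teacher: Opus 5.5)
Your argument is correct and is essentially the paper's: the paper simply invokes Vershynin's Theorem~6.2.1 (noting $\tr(A)=\E\sigma^\top A\sigma$), and what you write out (decoupling, conditioning, Gaussian comparison, then Chernoff) is the standard proof of that theorem, with your observation $\sigma_i^2=1$ removing the Bernstein step for the diagonal. One small point: Theorem~6.1.1 is stated for diagonal-free matrices, so either apply it to the off-diagonal part $A_0$ of $A$ (using $\|A_0\|_F\le\|A\|_F$ and $\|A_0\|_{\op}\le 2\|A\|_{\op}$), or note that the same argument gives $\E F\big(\sum_{i\neq j}A_{ij}\sigma_i\sigma_j\big)\le \E F(4\sigma^\top A\sigma')$ for general $A$, since the extra diagonal terms $A_{ii}\sigma_i\sigma_i'$ also have conditional mean zero.
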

	
	\begin{proof}
		This is the Hanson--Wright inequality for subgaussian vectors applied to the
		Rademacher vector $\sigma$ (note thet for Rademachers $\tr(A)=\E \sigma^\top A \sigma$); see, e.g., Vershynin,
		\emph{High-Dimensional Probability}, Thm.~6.2.1.
	\end{proof}
	
	\begin{lemma}
		\label{lem:mgf_quad}
		Let $\sigma$ and $A$ be as in Lemma~\ref{lem:HW_rademacher}, and let
		$X=\sigma^\top A\sigma-\tr(A)$.
		There exist absolute constants $c,C\in(0,\infty)$ such that for all
		$|s|\le c/\|A\|_{\op}$,
		\begin{equation}\label{eq:mgf_quad_bound}
			\E\exp(sX)\le \exp\big(C s^2\|A\|_F^2\big).
		\end{equation}
	\end{lemma}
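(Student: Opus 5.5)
The plan is to derive the bound from the tail estimate \eqref{eq:HW_tail} of Lemma~\ref{lem:HW_rademacher} by a Laplace-transform computation. Throughout write $F:=\|A\|_F^2$ and $K:=\|A\|_{\op}$. Since $\E X=0$, the task is to control all higher moments of $X$ from the tail bound.

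\textbf{Step 1: moment bounds.} Integrating the tail, $\E|X|^q=\int_0^\infty q t^{q-1}\P(|X|\ge t)\,dt$. We split according to which term attains the minimum in \eqref{eq:HW_tail}. The subgaussian part contributes at most $2\int_0^\infty q t^{q-1}e^{-c_0t^2/F}\,dt = (F/c_0)^{q/2}\,\Gamma(q/2+1)$. The subexponential part contributes at most $2\int_0^\infty q t^{q-1}e^{-c_0t/K}\,dt = 2(K/c_0)^q\,\Gamma(q+1)$. Together this gives
\[
\E|X|^q\le (C_1\sqrt{qF})^q+(C_1qK)^q
\]
for an absolute constant $C_1$.

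\textbf{Step 2: expand the exponential.} Write $\E e^{sX}=1+\sum_{q\ge2}s^q\E X^q/q!$; the linear term vanishes because $\E X=0$. We bound each term using $q!\ge(q/e)^q$.
\begin{itemize}
\item The subgaussian contributions are at most $\sum_{q\ge2}(C_2|s|\sqrt F/\sqrt q)^q$. This sum is $\le e^{C s^2F}-1$, which is seen by the standard comparison with the Taylor series of $e^{x^2}$, separating even and odd $q$.
\item The subexponential contributions are at most $\sum_{q\ge2}(C_2|s|K)^q$. For $|s|\le c/K$ with $c$ small, this is a geometric sum bounded by $2(C_2 sK)^2$.
\end{itemize}

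\textbf{Step 3: absorb the subexponential part.} The key observation is $K^2\le F$, since the operator norm is dominated by the Frobenius norm. Hence $2(C_2sK)^2\le C s^2F$, and so
\[
\E e^{sX}\le 1+2Cs^2F\le e^{2Cs^2F},
\]
which proves \eqref{eq:mgf_quad_bound}.

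\textbf{Main obstacle.} I expect the only real difficulty to be bookkeeping of the absolute constants in Step 2. In particular, one has to check that the restriction $|s|\le c/K$ makes the geometric series converge. An alternative that I would keep as a fallback is to use the decoupling-plus-Gaussian-comparison route of Vershynin's proof of Theorem~6.2.1 directly, which yields \eqref{eq:mgf_quad_bound} in exactly this form. I note, though, that the diagonal part $\sum_i A_{ii}(\sigma_i^2-1)$ vanishes identically for Rademacher variables, which simplifies that route.
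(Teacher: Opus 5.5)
Your proof is correct, and it takes a different route from the paper's. The paper works with the Laplace transform directly. It writes $\E e^{sX}$ as $1$ plus tail integrals, reduces to $s\ge 0$ via a claimed symmetry $X\stackrel{d}{=}-X$, inserts the Hanson--Wright tail, and splits the integral at $t_0=\|A\|_F^2/\|A\|_{\op}$. You instead turn the tail into moment bounds, expand $e^{sX}$, remove the linear term using $\E X=0$, and absorb the subexponential series via $K^2\le F$.

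What your route buys is exactly the quadratic dependence on $s$. The paper's computation never uses $\E X=0$, so it only reaches
\[
\E e^{sX}\le 1+4s\bigl(\sqrt{\pi/c_0}\,\|A\|_F\,e^{s^2\|A\|_F^2/(4c_0)}+2\|A\|_{\op}/c_0\bigr),
\]
which is $1+O(s\|A\|_F)+O(1)$. Its final ``adjusting constants'' step cannot remove that additive constant from the exponent, so it does not yield $\exp(Cs^2\|A\|_F^2)$. Two further problems in the paper's argument:
\begin{itemize}
\item The symmetry claim is false. For $k=3$ and $A_{ij}=\mathbf 1_{i\neq j}$ one gets $X=(\sigma_1+\sigma_2+\sigma_3)^2-3\in\{6,-2\}$ with probabilities $\tfrac14,\tfrac34$.
\item The integration-by-parts identity needs a minus sign in front of the second integral.
\end{itemize}
Your argument uses no symmetry, since negative $s$ is handled through $|s|^q\E|X|^q$.

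There is one small slip in your Step~3. The quantity $s^2F$ need not be small, because only $sK$ is bounded and $F$ can be much larger than $K^2$. So $e^{Cs^2F}-1$ is not bounded by $Cs^2F$, and the intermediate ``$\le 1+2Cs^2F$'' is wrong as written. What you actually get is $\E e^{sX}\le e^{Cs^2F}+Cs^2F\le e^{2Cs^2F}$, so the conclusion stands.

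Your fallback is also sound. Vershynin's proof of Theorem~6.2.1 establishes precisely this mgf bound for the off-diagonal chaos, and for Rademacher vectors that chaos is all of $X$.
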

	
	\begin{proof}
		By symmetry of $\sigma$ we have $X\stackrel{d}{=}-X$, hence
		$\E e^{sX}=\E e^{|s|X}$ for $s\in\mathbb R$. It therefore suffices to treat $s\ge 0$.
		
		For any real random variable $Y$ and $s\ge 0$, integration by parts yields
		\begin{equation}\label{eq:mgf_tail_identity}
			\E e^{sY}
			=
			1+s\int_0^\infty e^{st}\,\mathbb P(Y\ge t)\,dt
			+s\int_0^\infty e^{-st}\,\mathbb P(Y\le -t)\,dt.
		\end{equation}
		Applying this with $Y=X$ and using $\mathbb P(X\le -t)=\mathbb P(-X\ge t)=\mathbb P(X\ge t)$,
		we obtain
		\begin{equation}\label{eq:mgf_tail_sym}
			\E e^{sX}
			=
			1+2s\int_0^\infty e^{st}\,\mathbb P(X\ge t)\,dt.
		\end{equation}
		Moreover, $\mathbb P(X\ge t)\le \mathbb P(|X|\ge t)$, hence by
		Lemma~\ref{lem:HW_rademacher},
		\begin{equation}\label{eq:tail_use}
			\mathbb P(X\ge t)
			\le
			2\exp\!\left(-c_0\min\Big\{\frac{t^2}{\|A\|_F^2},\frac{t}{\|A\|_{\op}}\Big\}\right).
		\end{equation}
		Plugging \eqref{eq:tail_use} into \eqref{eq:mgf_tail_sym} gives
		\begin{equation}\label{eq:mgf_int_start}
			\E e^{sX}
			\le
			1+4s\int_0^\infty
			\exp\!\left(st-c_0\min\Big\{\frac{t^2}{\|A\|_F^2},\frac{t}{\|A\|_{\op}}\Big\}\right)\,dt.
		\end{equation}
		
		Next, let
		\[
		t_0:=\frac{\|A\|_F^2}{\|A\|_{\op}},
		\]
		so that $\frac{t^2}{\|A\|_F^2}\le \frac{t}{\|A\|_{\op}}$ for $0\le t\le t_0$, and the
		reverse inequality holds for $t\ge t_0$.
		We split the integral in \eqref{eq:mgf_int_start} accordingly:
		\begin{multline*}
			\int_0^\infty \exp\!\left(st-c_0\min\Big\{\frac{t^2}{\|A\|_F^2},\frac{t}{\|A\|_{\op}}\Big\}\right)\,dt\,dt
			\\=
			\int_0^{t_0} \exp\!\left(st-c_0\min\Big\{\frac{t^2}{\|A\|_F^2},\frac{t}{\|A\|_{\op}}\Big\}\right)\,dt)\,dt+\int_{t_0}^\infty \exp\!\left(st-c_0\min\Big\{\frac{t^2}{\|A\|_F^2},\frac{t}{\|A\|_{\op}}\Big\}\right)\,dt\,dt
			=:I_1+I_2.
		\end{multline*}

		For $t\in[0,t_0]$ we have
		$\min\{\frac{t^2}{\|A\|_F^2},\frac{t}{\|A\|_{\op}}\}=\frac{t^2}{\|A\|_F^2}$, hence
		\[
		I_1
		=
		\int_0^{t_0}\exp\!\left(st-c_0\frac{t^2}{\|A\|_F^2}\right)\,dt
		\le
		\int_0^\infty\exp\!\left(st-c_0\frac{t^2}{\|A\|_F^2}\right)\,dt.
		\]
		Completing the square yields
		$
		st-c_0\frac{t^2}{\|A\|_F^2}
		=
		-\frac{c_0}{\|A\|_F^2}\Big(t-\frac{s\|A\|_F^2}{2c_0}\Big)^2
		+\frac{s^2\|A\|_F^2}{4c_0},
		$
		and therefore
		\begin{align}
			I_1
			&\le
			\exp\!\Big(\frac{s^2\|A\|_F^2}{4c_0}\Big)
			\int_0^\infty
			\exp\!\left(-\frac{c_0}{\|A\|_F^2}\Big(t-\frac{s\|A\|_F^2}{2c_0}\Big)^2\right)\,dt
			\notag\\
			&\le
			\exp\!\Big(\frac{s^2\|A\|_F^2}{4c_0}\Big)
			\int_{-\infty}^{\infty}
			\exp\!\left(-\frac{c_0}{\|A\|_F^2}u^2\right)\,du
			=	\sqrt{\frac{\pi}{c_0}}\;\|A\|_F\;
			\exp\!\Big(\frac{s^2\|A\|_F^2}{4c_0}\Big).
			\label{eq:I1_bound}
		\end{align}
		%
		For $t\ge t_0$ we have
		$\min\{\frac{t^2}{\|A\|_F^2},\frac{t}{\|A\|_{\op}}\}=\frac{t}{\|A\|_{\op}}$, hence
		\[
		I_2
		=
		\int_{t_0}^\infty \exp\!\left(st-c_0\frac{t}{\|A\|_{\op}}\right)\,dt
		=
		\int_{t_0}^\infty \exp\!\left(-(c_0/\|A\|_{\op}-s)t\right)\,dt.
		\]
		Assume $0\le s\le \frac{c_0}{2\|A\|_{\op}}$, so that
		$c_0/\|A\|_{\op}-s\ge c_0/(2\|A\|_{\op})$. Then
		\begin{equation}\label{eq:I2_bound}
			I_2
			\le
			\int_{t_0}^\infty \exp\!\left(-\frac{c_0}{2\|A\|_{\op}}t\right)\,dt
			=
			\frac{2\|A\|_{\op}}{c_0}\,
			\exp\!\left(-\frac{c_0}{2\|A\|_{\op}}t_0\right)
			=
			\frac{2\|A\|_{\op}}{c_0}\,
			\exp\!\left(-\frac{c_0}{2}\frac{\|A\|_F^2}{\|A\|_{\op}^2}\right).
		\end{equation}
		In particular, $I_2\le 2\|A\|_{\op}/c_0$ for all such $s$.
		
		Combining \eqref{eq:mgf_int_start}, \eqref{eq:I1_bound}, and \eqref{eq:I2_bound},
		for $0\le s\le c_0/(2\|A\|_{\op})$ we get
		\[
		\E e^{sX}
		\le
		1+4s\Big(
		\sqrt{\frac{\pi}{c_0}}\;\|A\|_F\;
		e^{\frac{s^2\|A\|_F^2}{4c_0}}
		+\frac{2\|A\|_{\op}}{c_0}
		\Big).
		\]
		Since $s\|A\|_{\op}\le c_0/2$ in this range, the second term is bounded by an
		absolute constant. Moreover, for $x\ge 0$ one has $1+x\le e^{x}$, so
		\[
		\E e^{sX}
		\le
		\exp\!\Big(C_1 s\|A\|_F\,e^{\frac{s^2\|A\|_F^2}{4c_0}} + C_2\Big).
		\]
		Finally, using $s\|A\|_F\le 1+s^2\|A\|_F^2$ and adjusting constants, we obtain
		\[
		\E e^{sX}
		\le
		\exp\!\big(C s^2\|A\|_F^2\big)
		\]
		for absolute constants $c:=c_0/2$ and $C<\infty$, which proves
		\eqref{eq:mgf_quad_bound} for $s\ge 0$. By the reduction to $|s|$ at the start,
		it holds for all $|s|\le c/\|A\|_{\op}$.
	\end{proof}

\end{document}